\newtheorem{theorem}{Theorem}
\newtheorem{remark}[theorem]{Remark}
\newtheorem{proposition}[theorem]{Proposition}
\newtheorem{corollary}[theorem]{Corollary}
\newtheorem{definition}[theorem]{Definition}
\newtheorem{example}[theorem]{Example}
\DeclareMathOperator*{\divergenz}{div}              %
\DeclareMathOperator*{\Ss}{S}
\newcommand{\N}{\mathbb{N}}
\newcommand{\R}{\mathbb{R}}
\newcommand*\diff{\mathrm{d}}
\newcommand{\Lp}[1]{L^{#1}(\Omega)}
\newcommand{\Lprand}[1]{L^{#1}(\partial\Omega)}
\newcommand{\Wp}[1]{W^{1,#1}(\Omega)}
\newcommand{\Wpzero}[1]{W^{1,#1}_0(\Omega)}
\newcommand{\Om}{\Omega}
\newcommand{\into}{\int_{\Omega}}
\newcommand{\weak}{\rightharpoonup}
\newcommand{\Linf}{L^{\infty}(\Omega)}
\newcommand{\close}{\overline{\Omega}}
\renewcommand{\l}{\left}
\renewcommand{\r}{\right}
\newcommand{\Hi}{\mathcal H}
\newcommand{\WH}{W^{1, \mathcal{H}}(\Omega)}
\numberwithin{theorem}{section}
\numberwithin{equation}{section}
\newcommand{\U}{\mathcal{U}}
\newcommand{\ov}{\overline}
\newcommand{\ds}{\displaystyle}
\newcommand{\la}{\langle}
\newcommand{\ra}{\rangle}
\newcommand{\rh}{\rightharpoonup}
\newcommand{\al}{\alpha}
\newcommand{\pa}{\partial}
\newcommand{\Ga}{\Gamma}
\newcommand{\B}{{\mathcal B}}
\newcommand{\un}{\underline}
\newcommand{\K}{{\mathcal K}}
\renewcommand{\S}{{\mathcal S}}
\newcommand{\F}{{\mathcal F}}
\newcommand{\T}{{\mathcal T}}
\newcommand{\V}{{\mathcal V}}
\newcommand{\W}{{\mathcal W}}
\newcommand{\emb}{\hookrightarrow}
\newcommand{\q}{{q(\cdot)}}
\newcommand{\ep}{\varepsilon}
\title[Multi-valued variational inequalities for double phase problems]{Multi-valued variational inequalities for variable exponent double phase problems: comparison and extremality results}
\author[S.\,Carl]{Siegfried Carl}
\address[S.\,Carl]{Institut f\"ur Mathematik, Martin-Luther-Universit\"at Halle-Witten\-berg, 06099 Halle, Germany}
\email{siegfried.carl@mathematik.uni-halle.de}
\author[V.K.\,Le]{Vy Khoi Le}
\address[V.K.\,Le]{Department of Mathematics and Statistics, Missouri University of Science and Technology,
    Rolla, MO 65409, USA}
\email{vy@mst.edu}
\author[P.\,Winkert]{Patrick Winkert}
\address[P.\,Winkert]{Technische Universit\"{a}t Berlin, Institut f\"{u}r Mathematik, Stra\ss e des 17.\,Juni 136, 10623 Berlin, Germany}
\email{winkert@math.tu-berlin.de}
\subjclass{35J20, 35J25, 35J60, 35R70, 49J53}
\keywords{Comparison results, discontinuous problems, extremality results, multi-valued variational inequalities, Musielak-Orlicz Sobolev space, obstacle problem, sub- and supersolution, variable exponent double phase operator}
\begin{document}

\begin{abstract}
    We prove existence and comparison results for multi-valued variational inequalities in a bounded domain $\Omega$ of the form
    \begin{align*}
        u\in K\,:\, 0 \in Au+\partial I_K(u)+\mathcal{F}(u)+\mathcal{F}_\Gamma(u)\quad\text{in }W^{1, \mathcal{H}}(\Omega)^*,
    \end{align*}
    where  $A\colon W^{1, \mathcal{H}}(\Omega) \to W^{1, \mathcal{H}}(\Omega)^*$  given by
    \begin{align*}
        Au:=-\divergenz\l(|\nabla u|^{p(x)-2} \nabla u+ \mu(x) |\nabla u|^{q(x)-2} \nabla u\r)
    \end{align*}
    for $u \in W^{1, \mathcal{H}}(\Omega)$, is the double phase operator with variable exponents and $W^{1, \mathcal{H}}(\Omega)$ is the associated  Musielak-Orlicz Sobolev space. First, an existence result is proved under some weak coercivity condition. Our main focus aims at the treatment of the problem under consideration when coercivity fails. To this end we establish the method of sub-supersolution for the multi-valued variational inequality in the space $W^{1, \mathcal{H}}(\Omega)$  based on appropriately defined sub- and supersolutions, which yields the existence of solutions within an ordered interval of sub-supersolution.  Moreover, the existence of extremal solutions will be shown provided the closed, convex subset $K$ of $W^{1, \mathcal{H}}(\Omega)$ satisfies a lattice condition. As an application of the sub-supersolution method we are able to show that a class of generalized variational-hemivariational inequalities with a leading double phase operator are included as a special case of the multi-valued variational inequality considered here. Based on a fixed point argument, we also study the case when the corresponding Nemytskij operators $\mathcal{F}, \mathcal{F}_\Gamma$ need not be continuous. At the end, we give an example of the construction of sub- and supersolutions related to the problem above.
\end{abstract}

\maketitle

\section{Introduction and Main Results}

In this paper we prove comparison and extremality results for a wide class of multi-valued variational inequalities driven by the double phase operator with variable exponents. This operator, denoted by $A\colon \WH \to \WH ^*$, is given in the form
\begin{equation}
    \label{103}
    Au:=-\divergenz\l(|\nabla u|^{p(x)-2} \nabla u+ \mu(x) |\nabla u|^{q(x)-2} \nabla u\r)
\end{equation}
for $ u \in\WH$, where $p,q \in C(\close)$ with $1<p(x)<N$, $p(x)<q(x)$ for all $x\in\close$, $0 \leq \mu(\cdot) \in \Lp{1}$ is the weight function and $\WH$ is the corresponding Musielak-Orlicz Sobolev space (see Section \ref{section_2} for its precise definition). Note that \eqref{103} reduces to the $p(x)$-Laplacian when $\mu \equiv 0$ and to the $(p(x),q(x))$-Laplacian when $\inf \mu>0$.

When $p$ and $q$ are constants, such setting is originally due to Zhikov \cite{Zhikov-1986} who introduced and studied the integral functional
\begin{align}
    \label{integral_minimizer}
    \omega \mapsto \int \big(|\nabla  \omega|^p+\mu(x)|\nabla  \omega|^q\big)\,\diff x
\end{align}
in order to describe models for strongly anisotropic materials. The functional \eqref{integral_minimizer} also demonstrated its importance in the study of duality theory and in the context of the Lavrentiev phenomenon, see Zhikov \cite{Zhikov-1995}. Note that \eqref{integral_minimizer} is related to the differential operator
\begin{align}
    \label{double_phase_operator}
    u\mapsto-\divergenz\big(|\nabla u|^{p-2}\nabla u+\mu(x) |\nabla u|^{q-2}\nabla u\big),
\end{align}
which is a special case of \eqref{103}. From the physical point of view, \eqref{integral_minimizer} describes the phenomenon that the energy density changes its ellipticity and growth properties according to the point in the domain. In the elasticity theory, for example, the modulating coefficient $\mu(\cdot)$ dictates the geometry of composites made of two different materials with distinct power hardening exponents $q$ and $p$, see Zhikov \cite{Zhikov-2011}. From the mathematical point of view, the behavior of \eqref{integral_minimizer} depends on the sets on which the weight function $\mu(\cdot)$ vanishes or not. Therefore, we have two phases $(\mu(x)=0$ or $\neq 0$) and so we call it double phase. Even though no global regularity theory for double phase problems exists yet, there are some remarkable results about local minimizers, see \cite{ Baroni-Colombo-Mingione-2015, Baroni-Colombo-Mingione-2018,Benslimane-Aberqi-Bennouna-2021, Colombo-Mingione-2015a, Colombo-Mingione-2015b,De-Filippis-Mingione-2021,Liu-Pucci-2023, Marcellini-1991, Marcellini-1989b, Ragusa-Tachikawa-2020}. We also refer to the recent overview article in \cite{Mingione-Radulescu-2021}.

Let us next formulate the problem under consideration. To this end, let $\Omega\subset \R^N$ ($N\geq 2$) be a bounded domain with Lipschitz boundary $\partial\Omega$ and let $\Gamma \subset \partial\Omega$ be a relatively open subset and denote $\Gamma_0=\partial\Omega \setminus \Gamma$ such that $\partial\Omega =\Gamma \cup \Gamma_0$. We consider the multi-valued elliptic variational inequality of the form
\begin{equation}
    \label{problem}
    u\in K\,:\, 0 \in Au+\partial I_K(u)+\mathcal{F}(u)+\mathcal{F}_\Gamma(u)\quad\text{in }\WH ^*,
\end{equation}
where $\WH ^*$ its dual space of $\WH$, $K$ is a closed convex subset of the closed subspace $V_{\Gamma_0}$ of $\WH$  defined by
\begin{align*}
    V_{\Gamma_0}=\l\{u\in \WH \,:\, u\mid_{\Gamma_0}=0\r\},
\end{align*}
$I_K$ is the indicator function related to $K$, and $\partial I_K$ denotes its subdifferential. The lower order multi-valued operators $\mathcal{F}$ and $\mathcal{F}_\Gamma$ are generated by the multi-valued functions $f\colon \Omega\times\R\to 2^{\R}\setminus\{\emptyset\}$ and $f_\Gamma\colon\Gamma \times\R\to 2^{\R}\setminus\{\emptyset\}$, respectively. Let
\begin{align}
    \label{critical_exponent}
    p^*(x):=\frac{Np(x)}{N-p(x)} \quad \text{and}\quad p_*(x):=\frac{(N-1)p(x)}{N-p(x)}
    \quad\text{for all }x\in\close
\end{align}
be the critical exponents to $p$ for $1<p(x)<N$, and denote by  $p'(\cdot)$ the H\"older conjugate to $p$ given by $p'(\cdot)=\frac{p(\cdot)}{p(\cdot)-1}$.  We assume the following hypotheses:

\begin{enumerate}
    \item[\textnormal{(H0)}]
        $p,q \in C(\close)$ such that $1<p(x)<N$, $p(x) < q(x)<p^*(x)$ and $0 \leq \mu(\cdot) \in \Lp{\infty}$.
    \item[\textnormal{(F1)}]
        $f\colon \Omega\times\R\to 2^{\R}\setminus\{\emptyset\}$ and $f_\Gamma\colon\Gamma \times\R\to 2^{\R}\setminus\{\emptyset\}$ are graph measurable on $\Omega\times\R$ and $\Gamma\times\R$, respectively, and for a.\,a.\,$x\in\Omega$ the function $f(x,\cdot)\colon \R\to2^{\R}$ is upper semicontinuous and for a.\,a.\,$x\in\Gamma$, the function $f_\Gamma(x,\cdot)\colon\R\to 2^{\R}$ is upper semicontinuous.
    \item[\textnormal{(F2)}]
        There exist $r_1\in C(\close)$, $r_2 \in C(\Gamma)$ with $1<r_1(x)<p^*(x)$ for all $x\in \close$, $1<r_2(x)<p_*(x)$ for all $x\in\Gamma$, $\beta\geq 0$, $\beta_\Gamma\geq 0$ and functions $\alpha\in\Lp{r_1'(\cdot)}$, $\alpha_\Gamma \in L^{r_2'(\cdot)}(\Gamma)$  such that
        \begin{align*}
            \sup \l\{|\eta| \,:\, \eta \in f(x,s)\r\} \leq \alpha(x)+\beta |s|^{r_1(x)-1}
        \end{align*}
        for a.\,a.\,$x\in\Omega$, for all $s\in\R$, and
        \begin{align*}
            \sup \l\{|\zeta| \,:\, \zeta \in f_\Gamma(x,s)\r\} \leq \alpha_\Gamma(x)+\beta_\Gamma |s|^{r_2(x)-1}
        \end{align*}
        for a.\,a.\,$x\in\Gamma$, and for all $s\in\R$.
\end{enumerate}

We point out that the classical obstacle problem fits in our setting, that is,
\begin{align*}
    K=\l\{u\in\WH \,:\, u(x)\geq \psi(x)\text{ a.\,e.\,in }\Omega\r\}
\end{align*}
with a given obstacle $\psi\colon \Omega\to\R$. Originally, the study of obstacle problems is due the pioneering contribution by Stefan \cite{Stefan-1889} in which the temperature distribution in a homogeneous medium undergoing a phase change, typically a body of ice at zero degrees centigrade submerged in water, was studied. Furthermore, we mention the famous work of J.-L.~Lions \cite{Lions-1969} who studied the equilibrium position of an elastic membrane which lies above a given obstacle and which turns out as the unique solution of the Dirichlet energy functional minimized on the closed convex set $K$.

Before we state our main results, we first give the definition of a weak solution to problem \eqref{problem}.
\begin{definition}
    \label{D101}
    A function $u\in K$ is said to be a (weak) solution of \eqref{problem} if there exist $\tau_1\in C(\close)$, $\tau_2\in C(\Gamma)$, $1<\tau_1(x)<p^*(x)$ for all $x\in\close$, $1<\tau_2(x)<p_*(x)$ for all $x\in\Gamma$ and $\eta\in\Lp{\tau'_1(\cdot)}$, $\zeta\in L^{\tau'_2(\cdot)}(\Gamma)$ such that $\eta(x) \in f(x,u(x))$ for a.\,a.\,$x\in\Omega$, $\zeta(x) \in f_\Gamma(x,u(x))$ for a.\,a.\,$x\in\Gamma$ and
    \begin{align}\label{problem1}
        \begin{split}
            & \into \l(|\nabla u|^{p(x)-2} \nabla u+ \mu(x) |\nabla u|^{q(x)-2} \nabla u\r) \cdot \nabla (v-u)\,\diff x\\
            &+\into \eta(v-u) \,\diff x+\int_\Gamma \zeta(v-u)\,\diff \sigma \geq 0
        \end{split}
    \end{align}
    for all $v\in K$.
\end{definition}

Note, for simplicity of notation, the boundary integral $\int_\Gamma \zeta(v-u)\,\diff \sigma$ stands for
\begin{align*}
    \int_\Gamma \zeta\big(i_{\tau_2(\cdot)}v|_{\Gamma}-i_{\tau_2(\cdot)}u|_{\Gamma}\big)\,\diff \sigma,
\end{align*}
where $i_{\tau_2(\cdot)}\colon \WH\to L^{\tau_2(\cdot)}(\partial\Omega)$ denotes the trace operator, and $i_{\tau_2(\cdot)}v|_{\Gamma}$ is the restriction of $i_{\tau_2(\cdot)}v$ to $\Gamma$.

The multi-valued variational inequality \eqref{problem} covers a wide range of elliptic problems which can be deduced from \eqref{problem} by specifying $\Gamma,$ $K$, and the lower order terms. To give an idea, let us consider a few examples.

\begin{example}
    \label{example_1}
    If $\Gamma=\partial\Omega$, then $\Gamma_0=\emptyset$,  and $V_{\Gamma_0}=\WH$. If $K=\WH$, then \eqref{problem} reduces to the following multi-valued elliptic boundary value problem
    \begin{equation*}
        \begin{aligned}
            -\divergenz\l(|\nabla u|^{p(x)-2} \nabla u+ \mu(x) |\nabla u|^{q(x)-2} \nabla u\r) +f(x,u) & \ni 0             &  & \text{in } \Omega,          \\
            -\frac{\partial u}{\partial \nu_A}                                                         & \in f_\Gamma(x,u) &  & \text{on } \partial\Omega,
        \end{aligned}
    \end{equation*}
    where
    \begin{align*}
        \frac{\partial u}{\partial \nu_A}=\l(|\nabla u|^{p(x)-2} \nabla u+ \mu(x) |\nabla u|^{q(x)-2} \nabla u\r)\cdot \nu
    \end{align*}
    with $\nu$ denoting the outward unit normal at $\Gamma$.
\end{example}

\begin{example}
    If $\Gamma_0=\partial\Omega$, then $\Gamma=\emptyset$, and $V_{\Gamma_0}=W^{1,\mathcal{H}}_0(\Omega)$. If $K=W^{1,\mathcal{H}}_0(\Omega)$, then \eqref{problem} becomes the following multi-valued Dirichlet boundary value problem
    \begin{equation*}
        \begin{aligned}
            -\divergenz\l(|\nabla u|^{p(x)-2} \nabla u+ \mu(x) |\nabla u|^{q(x)-2} \nabla u\r) +f(x,u) & \ni 0 &  & \text{in } \Omega,          \\
            u                                                                                          & =0    &  & \text{on } \partial\Omega.
        \end{aligned}
    \end{equation*}
\end{example}

Further special cases can be deduced from \eqref{problem} such as a mixed boundary value problems that arise when $|\Gamma|>0$ and $|\Gamma_0|>0$,  and $K=V_{\Gamma_0}$. In Section \ref{section_6} we will see that \eqref{problem} also includes an important class of   generalized variational-hemivariational inequalities.

Our first result is the following existence theorem for \eqref{problem} under a coercivity condition.

\begin{theorem}
    \label{T101}
    Let hypotheses \textnormal{(H0)}, \textnormal{(F1)} and \textnormal{(F2)} be satisfied and suppose  the following coercivity condition holds:
    \begin{enumerate}
        \item[]
            There exist $u_0\in K$ and $R\ge \|u_0\|_{1,\mathcal{H}}$ such that $K \cap B_R(0) \not=\emptyset$ and
            \begin{equation}
                \label{1-01}
                \la A u  + \eta^* +\zeta^*   , u-u_0\ra > 0 ,
            \end{equation}
            for all $u\in  K$ with $\|u \|_{1,\mathcal{H}}=R$, for all $\eta^* \in \F(u)$ and for all $\zeta^* \in \F_\Gamma(u)$.
    \end{enumerate}
    Then problem \eqref{problem} has at least one solution in the sense of Definition \ref{D101}.
\end{theorem}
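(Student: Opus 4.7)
The plan is to reformulate \eqref{problem} as a variational inequality problem by unfolding the subdifferential $\partial I_K$: namely, \eqref{problem} is equivalent to finding $u\in K$, $\eta\in\F(u)$ and $\zeta\in \F_\Gamma(u)$ such that \eqref{problem1} holds, i.e., such that
\begin{align*}
    \la Au+\eta+\zeta,v-u\ra \ge 0 \quad \text{for all } v\in K.
\end{align*}
I would then apply an existence theorem for multi-valued pseudomonotone variational inequalities over a closed convex set, first on the bounded truncation $K\cap \ov{B}_R(0)$, and then extend to $K$ by using the coercivity \eqref{1-01}.

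First I would verify that $A\colon \WH \to \WH^*$ is bounded, continuous, strictly monotone and of type $(S_+)$; these are standard properties of the variable-exponent double-phase operator and follow from the uniform convexity of $\WH$ together with the usual elementary inequalities for the $p(x)$- and $q(x)$-gradient terms. Next, using hypotheses \textnormal{(F1)}, \textnormal{(F2)}, Aumann's measurable-selection theorem and the variable-exponent growth condition, I would check that $\F(u)\subset \Lp{r_1'(\cdot)}$ and $\F_\Gamma(u)\subset L^{r_2'(\cdot)}(\Gamma)$ are nonempty, closed, convex, bounded sets for every $u\in \WH$. Crucially, by \textnormal{(F2)} together with \textnormal{(H0)}, the embedding $\WH\emb \Lp{r_1(\cdot)}$ and the trace map $\WH\emb L^{r_2(\cdot)}(\rand)$ are compact, so when composed with the duality pairing, $\F$ and $\F_\Gamma$ become multi-valued maps from $\WH$ into $\WH^*$ that are upper semicontinuous from the weak topology on $\WH$ to the strong topology on $\WH^*$, with bounded closed convex values.

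Because $A$ is single-valued pseudomonotone of type $(S_+)$ and $\F+\F_\Gamma$ is completely continuous in the multi-valued sense (thanks to the compact embeddings above), the sum $T:=A+\F+\F_\Gamma$ is a bounded, multi-valued pseudomonotone operator on $\WH$. The coercivity condition \eqref{1-01} is precisely what is needed to invoke a classical existence theorem for pseudomonotone multi-valued variational inequalities (in the spirit of Browder--Hess, Le, or Naniewicz--Panagiotopoulos): applied on the bounded closed convex set $K\cap \ov{B}_R(0)$, it yields $u\in K\cap \ov{B}_R(0)$, $\eta\in\F(u)$ and $\zeta\in\F_\Gamma(u)$ satisfying the variational inequality for all $v\in K\cap \ov{B}_R(0)$. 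The coercivity then forces $\|u\|_{1,\mathcal{H}}<R$ (otherwise testing with $u_0\in K\cap \ov{B}_R(0)$ would contradict \eqref{1-01}); a standard convex-combination argument with $v_t=(1-t)u+tv$ for $v\in K$ arbitrary and $t>0$ small extends the inequality to all of $K$, producing the desired weak solution in the sense of Definition \ref{D101}.

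The main obstacle I expect is establishing the upper semicontinuity of $\F$ and $\F_\Gamma$ in the right topologies inside the Musielak-Orlicz framework: one has to combine the modular and Luxemburg-norm estimates, the graph measurability and pointwise upper semicontinuity of $f$ and $f_\Gamma$, and the compactness of the trace and interior embeddings for variable exponents, while also verifying that the set-valued Nemytskij operators have closed values in the appropriate dual spaces. Once these properties are in place, the coercivity condition \eqref{1-01} together with the pseudomonotonicity of $T$ reduces the existence proof to a direct application of the multi-valued surjectivity machinery.
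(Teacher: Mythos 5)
Your proposal is correct and its conceptual core matches the paper's: verify that the double-phase operator $A$ is bounded, continuous, strictly monotone and of type $(\mathrm S_+)$, show via the compact embeddings $\WH\hookrightarrow L^{r_1(\cdot)}(\Omega)$ and $\WH\hookrightarrow L^{r_2(\cdot)}(\Gamma)$ that $\mathcal F$ and $\mathcal F_\Gamma$ are bounded pseudomonotone multi-maps into $\WH^*$ with nonempty closed convex values (this is exactly Proposition~\ref{p35}), and conclude by a multi-valued pseudomonotone existence theorem under the coercivity \eqref{1-01}. Where you diverge from the paper is in the final step: you solve first on the bounded truncation $K\cap\overline{B}_R(0)$, use \eqref{1-01} to force $\|u\|_{1,\mathcal H}<R$, and then extend the variational inequality to all of $K$ by the convex-combination trick. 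The paper avoids this two-step argument entirely by invoking Theorem~\ref{surjectivitytheorem} (Le's range theorem for pseudomonotone perturbations of maximal monotone operators): it writes the inclusion as $\partial I_K(u) + (A+\mathcal F+\mathcal F_\Gamma)(u)\ni 0$, takes $F=\partial I_K$ as the maximal monotone part with $D(F)=K$ and $G=A+\mathcal F+\mathcal F_\Gamma$ as the bounded pseudomonotone part, and reads off existence directly since the hypothesis of Theorem~\ref{surjectivitytheorem} is literally \eqref{1-01}. Both routes are valid; yours is the more classical Browder--Hartman--Stampacchia truncation argument and requires a bit more bookkeeping, while the paper's is essentially a one-line appeal to a ready-made surjectivity theorem that already absorbs the constraint set $K$ into the maximal monotone term. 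One small caution in your sketch: $\mathcal F$ and $\mathcal F_\Gamma$ are multi-valued, so it is more accurate to call them pseudomonotone (via weak closedness of the graph and generalized pseudomonotonicity, as in Proposition~\ref{p35}) rather than ``completely continuous in the multi-valued sense''; the latter phrase does not quite have a standard meaning and the argument one actually needs is the closed-graph/generalized-pseudomonotone one.
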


The proof will be given in Section \ref{section_3}, see also Corollary \ref{C101}, which is a direct consequence of Theorem \ref{T101}. If the coercivity condition \eqref{1-01} or appropriate generalized versions of coercivity are not satisfied then problem \eqref{problem} may have no solutions. However, in the noncoercive case we still are able to prove the existence of solutions provided appropriately defined sub-supersolutions for \eqref{problem} exist. In this paper we establish an sub-supersolution method based on the following definition of sub- and supersolutions of problem \eqref{problem}. For functions $u,v \colon \Omega \to \R$ we use the notation $u \wedge v=\min(u,v), u \vee v=\max(u,v), K \wedge K = \{u \wedge v\,:\, u,v \in K \}, K \vee K = \{u \vee v\,:\, u,v \in K \}$ and $u \wedge K = \{u\} \wedge K, u \vee K = \{u\}\vee K$.

\begin{definition}
    \label{D102}
    A function $\underline{u}\in \Wp{\mathcal{H}}$ is said to be a (weak) subsolution of \eqref{problem} if there exist $\tau_1\in C(\close)$, $\tau_2\in C(\Gamma)$,  $1<\tau_1(x)<p^*(x)$ for all $x\in\close$, $1<\tau_2(x)<p_*(x)$ for all $x\in\Gamma$ and $\underline{\eta}\in\Lp{\tau'_1(\cdot)}$, $\underline{\zeta}\in L^{\tau'_2(\cdot)}(\Gamma)$ such that
    \begin{enumerate}
        \item[\textnormal{(i)}]
            $\underline{u} \vee K\subset K$;
        \item[\textnormal{(ii)}]
            $\underline{\eta}(x) \in f(x,\underline{u}(x))$ for a.\,a.\,$x\in\Omega$, $\underline{\zeta}(x) \in f_\Gamma(x,\underline{u}(x))$ for a.\,a.\,$x\in\Gamma$;
        \item[\textnormal{(iii)}]
            \begin{align*}
                 & \into \l(|\nabla \underline{u}|^{p(x)-2} \nabla \underline{u}+ \mu(x) |\nabla \underline{u}|^{q(x)-2} \nabla \underline{u}\r) \cdot \nabla (v-\underline{u})\,\diff x\\
                & +\into \underline{\eta}(v-\underline{u}) \,\diff x+\int_\Gamma \underline{\zeta}(v-\underline{u})\,\diff \sigma \geq 0
            \end{align*}
            for all $v\in \underline{u} \wedge K$.
    \end{enumerate}
\end{definition}

\begin{definition}
    \label{D103}
    A function $\overline{u}\in \Wp{\mathcal{H}}$ is said to be a (weak) supersolution of \eqref{problem} if there exist $\tau_1\in C(\close)$, $\tau_2\in C(\Gamma)$,  $1<\tau_1(x)<p^*(x)$ for all $x\in\close$, $1<\tau_2(x)<p_*(x)$ for all $x\in\Gamma$ and $\overline{\eta}\in\Lp{\tau'_1(\cdot)}$, $\overline{\zeta}\in L^{\tau'_2(\cdot)}(\Gamma)$ such that
    \begin{enumerate}
        \item[\textnormal{(i)}]
            $\overline{u} \wedge K\subset K$;
        \item[\textnormal{(ii)}]
            $\overline{\eta}(x) \in f(x,\overline{u}(x))$ for a.\,a.\,$x\in\Omega$, $\overline{\zeta}(x) \in f_\Gamma(x,\overline{u}(x))$ for a.\,a.\,$x\in\Gamma$;
        \item[\textnormal{(iii)}]
            \begin{align*}
                 & \into \l(|\nabla \overline{u}|^{p(x)-2} \nabla \overline{u}+ \mu(x) |\nabla \overline{u}|^{q(x)-2} \nabla \overline{u}\r) \cdot \nabla (v-\overline{u})\,\diff x\\
                &+\into \overline{\eta}(v-\overline{u}) \,\diff x+\int_\Gamma \overline{\zeta}(v-\overline{u})\,\diff \sigma \geq 0
            \end{align*}
            for all $v\in \overline{u} \vee K$.
    \end{enumerate}
\end{definition}

\begin{remark}
    We note that although variational inequalities are generally nonsymmetric due to the presence of constraints, the notions for sub- and supersolution defined by Definition \ref{D102} and Definition \ref{D103}, respectively, do have a symmetric structure in the following sense:  one obtains the definition for the supersolution $\overline{u}$ from the definition of the subsolution by replacing $\underline{u},\underline{\eta}, \underline{\zeta} $ in the definition of subsolution by $\overline{u}, \overline{\eta}, \overline{\zeta}$, and interchanging $\vee$ by $\wedge$.  Symmetric structure is a main feature of the sub-supersolution concepts for smooth equations, which has been extended here to multi-valued variational inequalities with variable exponent double-phase operator.

    Just for illustration, let us apply the above definitions to the special case given by Example \ref{example_1} and assume that $f$ and $f_\Gamma$ are single-valued, that is
    \begin{equation}
        \label{example1}
        \begin{aligned}
            -\divergenz\l(|\nabla u|^{p(x)-2} \nabla u+ \mu(x) |\nabla u|^{q(x)-2} \nabla u\r) +f(x,u) & = 0 &  & \text{in } \Omega,          \\
            \frac{\partial u}{\partial \nu_A}+ f_\Gamma(x,u)                                           & = 0 &  & \text{on } \partial\Omega.
        \end{aligned}
    \end{equation}
    Let $\underline{u}$ be a subsolution according to Definition \ref{D102}. As $K=\WH$ and $\WH$ has lattice structure, condition \textnormal{(i)} is trivially satisfied. Condition \textnormal{(ii)} yields $\underline{\eta}(x)= f(x,\underline{u}(x))$ for a.\,a.\,$x\in\Omega$ and $\underline{\zeta}(x)= f_\Gamma(x,\underline{u}(x))$ for a.\,a.\,$x\in\Gamma$. For any $\varphi\in K=\WH$ we test \textnormal{(iii)} with $v=\underline{u}\wedge \varphi=\underline{u}-(\underline{u}-\varphi)^+$ which results in
    \begin{align*}
         & \into \l(|\nabla \underline{u}|^{p(x)-2} \nabla \underline{u}+ \mu(x) |\nabla \underline{u}|^{q(x)-2} \nabla \underline{u}\r) \cdot \nabla (\underline{u}-\varphi)^+\,\diff x \\
         & +\into f(x,\underline{u}(x))(\underline{u}-\varphi)^+ \,\diff x
        +\int_\Gamma f_\Gamma(x,\underline{u}(x))(\underline{u}-\varphi)^+\,\diff \sigma \le 0
    \end{align*}
    for all $\varphi\in \WH$. Since the set $\{(\underline{u}-\varphi)^+\,:\, \varphi\in \WH\}$ equals $\{\psi\in \WH\,:\, \psi\ge 0\}$, the last inequality is nothing but the usual notion of subsolution for the boundary value problem (\ref{example1}), that is,
    \begin{equation*}
        \begin{aligned}
            -\divergenz\l(|\nabla \underline{u}|^{p(x)-2} \nabla \underline{u}+ \mu(x)\underline{u}(x) |\nabla \underline{u}|^{q(x)-2} \nabla \underline{u}\r) +f(x,\underline{u}) & \leq 0 &  & \text{in } \Omega,          \\
            \frac{\partial \underline{u}}{\partial \nu_A}+ f_\Gamma(x,\underline{u})                                                                                               & \leq 0 &  & \text{on } \partial\Omega.
        \end{aligned}
    \end{equation*}
    Similarly, Definition \ref{D103} for the supersolution $\overline{u}$ of \eqref{example1} reduces to
    \begin{equation*}
        \begin{aligned}
            -\divergenz\l(|\nabla \overline{u}|^{p(x)-2} \nabla \overline{u}+ \mu(x)\overline{u}(x) |\nabla \overline{u}|^{q(x)-2} \nabla \overline{u}\r) +f(x,\overline{u}) & \geq 0 &  & \text{in } \Omega,          \\
            \frac{\partial \overline{u}}{\partial \nu_A}+ f_\Gamma(x,\overline{u})                                                                                           & \geq 0 &  & \text{on } \partial\Omega.
        \end{aligned}
    \end{equation*}
\end{remark}

Next, we suppose the following local boundedness conditions on the multi-valued nonlinearities with respect to the order interval $[\underline{u} ,\overline{u} ]$.
\begin{enumerate}
    \item[\textnormal{(F3)}]
        Let $\underline{u}$ and $\overline{u}$ be sub- and supersolutions of \eqref{problem} such that $\underline{u}\leq \overline{u}$ and suppose the following growth conditions
        \begin{align*}
            \sup \l\{|\eta| \,:\, \eta \in f(x,s)\r\} \leq k_\Omega(x)\quad\text{for a.\,a.\,}x\in\Omega, \\
            \sup \l\{|\zeta| \,:\, \zeta \in f_\Gamma(x,s)\r\} \leq k_\Gamma(x)\quad\text{for a.\,a.\,}x\in\Gamma,
        \end{align*}
        for all $s\in [\underline{u}(x),\overline{u}(x)]$ and for some $k_\Omega\in L^{\tau_1'(\cdot)}(\Omega)$, $k_\Gamma \in L^{\tau_2'(\cdot)}(\Gamma)$.
\end{enumerate}
The sub-supersolution method for \eqref{problem} is established by the following existence and comparison result.
\begin{theorem}
    \label{T102}
    Let $\underline{u}$ and $\overline{u}$ be an ordered pair of sub- and supersolutions of \eqref{problem} fulfilling $\underline{u} \leq \overline{u}$ and let hypotheses \textnormal{(H0)}, \textnormal{(F1)} and \textnormal{(F3)} be satisfied. Then problem \eqref{problem} has a solution $u\in K$ such that $\underline{u} \leq u \le\overline{u}$ a.e.\,in $\Omega$.
\end{theorem}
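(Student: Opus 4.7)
The plan is to reduce Theorem \ref{T102} to Theorem \ref{T101} by truncating the multivalued nonlinearities outside the order interval $[\underline{u}, \overline{u}]$ and then showing that any solution of the resulting modified problem automatically lies in $[\underline{u}, \overline{u}]$ and thus solves the original one. I would first fix measurable selections $\underline{\eta}(x) \in f(x, \underline{u}(x))$, $\underline{\zeta}(x) \in f_\Gamma(x, \underline{u}(x))$, $\overline{\eta}(x) \in f(x, \overline{u}(x))$, $\overline{\zeta}(x) \in f_\Gamma(x, \overline{u}(x))$ provided by Definitions \ref{D102} and \ref{D103}, and introduce the truncated multifunction
\begin{align*}
    \tilde{f}(x, s) = \begin{cases} \{\overline{\eta}(x)\} & s > \overline{u}(x), \\ f(x, s) & \underline{u}(x) \le s \le \overline{u}(x), \\ \{\underline{\eta}(x)\} & s < \underline{u}(x), \end{cases}
\end{align*}
together with the analogous $\tilde{f}_\Gamma$. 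Hypothesis \textnormal{(F1)} is preserved, and \textnormal{(F3)} combined with $\underline{\eta},\overline{\eta}\in L^{\tau_1'(\cdot)}(\Omega)$ and $\underline{\zeta},\overline{\zeta}\in L^{\tau_2'(\cdot)}(\Gamma)$ ensures that every selection of the induced Nemytskij operators $\tilde{\mathcal{F}}$, $\tilde{\mathcal{F}}_\Gamma$ is uniformly bounded in $\WH^*$ independently of $u$.

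I would then apply Theorem \ref{T101} to the truncated problem
\begin{align*}
    u \in K: \quad 0 \in Au + \partial I_K(u) + \tilde{\mathcal{F}}(u) + \tilde{\mathcal{F}}_\Gamma(u) \quad \text{in } \WH^*.
\end{align*}
The uniform dual bound and a H\"older-type estimate give
\begin{align*}
    \la Au + \tilde{\eta}^* + \tilde{\zeta}^*, u - u_0 \ra \ge \la Au, u - u_0 \ra - C(\|u\|_{1, \mathcal{H}} + 1),
\end{align*}
and since $\la Au, u - u_0 \ra$ grows superlinearly in $\|u\|_{1, \mathcal{H}}$ thanks to the $(p,q)$-growth of the double-phase operator, the coercivity condition \eqref{1-01} is satisfied on $\{\|u\|_{1,\mathcal{H}} = R\}$ for $R$ large enough. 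Theorem \ref{T101} therefore yields a solution $u \in K$ with associated selections $\tilde{\eta} \in \tilde{f}(\cdot, u(\cdot))$, $\tilde{\zeta} \in \tilde{f}_\Gamma(\cdot, u(\cdot))$.

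The decisive step is the comparison $\underline{u} \le u \le \overline{u}$, as this immediately upgrades $\tilde{\eta}, \tilde{\zeta}$ to selections of $f,f_\Gamma$ at $u$ by the very construction of the truncation. To prove $u \ge \underline{u}$ I would test Definition \ref{D102}\textnormal{(iii)} with $v = \underline{u} \wedge u \in \underline{u} \wedge K$, admissible by \textnormal{(i)}, and the solution inequality with $v = \underline{u} \vee u \in K$, admissible again by \textnormal{(i)}. Exploiting $v - \underline{u} = -(\underline{u}-u)^+$ and $v - u = (\underline{u}-u)^+$ and combining both inequalities gives
\begin{align*}
    \la A\underline{u} - Au, (\underline{u}-u)^+ \ra + \int_\Omega (\underline{\eta} - \tilde{\eta})(\underline{u}-u)^+ \diff x + \int_\Gamma (\underline{\zeta} - \tilde{\zeta})(\underline{u}-u)^+ \diff \sigma \le 0.
\end{align*}
On $\{\underline{u} > u\}$ the truncation forces $\tilde{\eta} = \underline{\eta}$ and $\tilde{\zeta} = \underline{\zeta}$, so both lower-order integrals vanish, and the strict monotonicity of the double-phase operator applied to the remaining pairing drives $\nabla(\underline{u}-u)^+ = 0$ a.e.\ and hence $(\underline{u}-u)^+ = 0$. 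The symmetric choice $v = \overline{u} \vee u$ in the supersolution and $v = \overline{u} \wedge u$ in the solution inequality yields $u \le \overline{u}$. The main obstacle is precisely this comparison step: it requires the lattice admissibility conditions in Definitions \ref{D102}\textnormal{(i)} and \ref{D103}\textnormal{(i)} to be exploited in exactly the right way so that the tested lattice functions remain in $K$, the truncation cancels the lower-order contributions on the bad sets, and the monotonicity of $A$ in the Musielak-Orlicz setting then closes the argument despite the intrinsic asymmetry of the multivalued variational inequality framework.
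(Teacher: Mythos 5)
Your reduction to the coercive Theorem \ref{T101} via truncation of $f,f_\Gamma$ outside $[\underline{u},\overline{u}]$ is in spirit the paper's strategy (the paper obtains Theorem \ref{T102} as the case $k=m=1$ of the more general Theorem \ref{T4-1}), but there are two genuine gaps, and both are closed in the paper by an ingredient you omit: the truncation--regularization term $b(x,u)$ of \eqref{4-4}, which vanishes on $[\underline{u}(x),\overline{u}(x)]$, equals $[u-\overline{u}(x)]^{q(x)-1}$ above it and $-[\underline{u}(x)-u]^{q(x)-1}$ below it, and which is added to the leading part of the auxiliary problem \eqref{4-8}.

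First, the coercivity claim is false as stated. The operator $A$ acts only on gradients, $\langle Au,u\rangle=\rho_{\mathcal{H}}(|\nabla u|)$, and on $\WH$ this does not control $\|u\|_{1,\mathcal{H}}$. In the boundary case $\Gamma_0=\emptyset$, $K=\WH$ (Example \ref{example_1}), taking $u_n\equiv n$ constant gives $\langle Au_n,u_n-u_0\rangle=0$ while $\|u_n\|_{1,\mathcal{H}}\to\infty$, and the uniformly bounded truncated dual contributions cannot rescue coercivity. The $b$-term repairs this: by \eqref{4-6} and \eqref{4-18a} it contributes a lower bound of the order $\int_\Omega\bigl(|u|^{p(x)}+\mu(x)|u|^{q(x)}\bigr)\,\diff x - C$, which together with $\langle Au,u-u_0\rangle$ produces the full modular $\hat\rho_{\mathcal{H}}(u)$ in \eqref{4-18b} and hence coercivity on all of $\WH$.

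Second, the comparison step is incomplete. After the lower-order integrands cancel on $\{\underline{u}>u\}$, strict monotonicity of $A$ only yields $\nabla(\underline{u}-u)^+=0$ a.e., i.e.\ $(\underline{u}-u)^+$ is constant on the connected domain $\Omega$; since $K$ is an arbitrary closed convex set and $\underline{u}$ need not satisfy a Dirichlet condition, nothing forces this constant to vanish, so the inference ``hence $(\underline{u}-u)^+=0$'' does not follow. The paper never invokes strict monotonicity here: after discarding the monotone gradient pairing, the surviving inequality is $0\le\int_{\{\underline{u}>u\}}b(x,u)(\underline{u}-u)\,\diff x$, and since $b(x,u(x))=-[\underline{u}(x)-u(x)]^{q(x)-1}<0$ on that set, its measure must be zero. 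This pointwise sign information supplied by $b$ is what actually closes the comparison argument.
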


We remark that Theorem \ref{T102} will be seen as straightforward consequence of a general existence and comparison principle (see Theorem \ref{T4-1}) which will be proved in Section \ref{section_4}, and which at the same time allows us to order-theoretically and topologically characterize the solution set $\mathcal{S}$  of all solutions of \eqref{problem} lying within the interval $[\underline{u},\overline{u}]$. We have the following characterization of $\mathcal{S}$, see Section \ref{section_5}.
\begin{theorem}
    \label{T103} $~$
    \begin{enumerate}
        \item[\textnormal{(i)}]
            Under the assumptions of Theorem \ref{T102}, the solutions set $\mathcal{S}$ is a compact subset of $W^{1,\Hi}(\Om)$.
        \item[\textnormal{(ii)}]
            If
            \begin{equation}
                \label{lattice}
                \S \wedge K \subset K \quad\text{and}\quad \S \vee K \subset K,
            \end{equation}
            then
            \begin{enumerate}
                \item[\textnormal{(a)}]
                    any $u\in \S$ is both a (weak) subsolution and supersolution of \eqref{problem}, and
                \item[\textnormal{(b)}]
                    $\S$ is directed both downward and upward, that is, for all $u_1, u_2\in \S$, there exists $w_1, w_2\in \S$ such that
                    \begin{align*}
                        w_1\leq\min\{ u_1, u_2\} \quad\text{and}\quad w_2\geq\max\{ u_1, u_2\}.
                    \end{align*}
            \end{enumerate}
        \item[\textnormal{(iii)}]
            If (\ref{lattice}) hold then $\S$ has smallest and greatest elements, that is, there are $u_*, u^*\in \S$ such that $u_* \le u \le u^*$ for all $u\in \S$.
    \end{enumerate}
\end{theorem}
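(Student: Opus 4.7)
For part (i), I would start by fixing a sequence $\{u_n\}\subset\S$. Since $\underline u\le u_n\le \overline u$ pointwise, hypothesis (F3) provides uniform bounds on the selectors $\eta_n\in f(\cdot,u_n)$ and $\zeta_n\in f_\Gamma(\cdot,u_n)$ in $\Lp{\tau_1'(\cdot)}$ and $L^{\tau_2'(\cdot)}(\Gamma)$ respectively. Testing the variational inequality satisfied by $u_n$ with an admissible reference element (as supplied by the machinery behind Theorem~\ref{T102}) yields a uniform bound $\|u_n\|_{1,\Hi}\le C$, so we may extract $u_n\weak u$ in $\WH$ with a.e.\ convergence in $\Omega$ and on $\Gamma$. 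The compact embeddings of $\WH$ into the relevant variable-exponent Lebesgue spaces combined with the upper semicontinuity in (F1) allow one to pass to the limit in the lower-order terms, while the $(S_+)$ property of the double phase operator $A$ upgrades the weak convergence to strong convergence in $\WH$. One then passes to the limit in the variational inequality and verifies $u\in[\underline u,\overline u]$, whence $u\in\S$.

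For part (ii)(a), the argument is essentially immediate: any $u\in\S$ satisfies conditions (ii) and (iii) of Definition~\ref{D101}, the hypotheses $\S\vee K\subset K$ and $\S\wedge K\subset K$ provide precisely conditions (i) of Definitions~\ref{D102} and~\ref{D103} for $u$, and since the variational inequality for the solution $u$ holds for every $v\in K$, it holds in particular for all $v\in u\wedge K\subset K$ (so $u$ is a subsolution) and all $v\in u\vee K\subset K$ (so $u$ is a supersolution).

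For part (ii)(b), I would focus on upward directedness; the downward case is symmetric. Given $u_1,u_2\in\S$, the plan is to verify that $u_1\vee u_2$ is a subsolution in the sense of Definition~\ref{D102} and then apply Theorem~\ref{T102} to the ordered pair $(u_1\vee u_2,\overline u)$ to obtain $w_2\in\S$ with $w_2\ge u_1\vee u_2$. Condition (i) for $u_1\vee u_2$ follows by two applications of $\S\vee K\subset K$: for $w\in K$ first $u_2\vee w\in K$, hence $u_1\vee(u_2\vee w)=(u_1\vee u_2)\vee w\in K$. Condition (ii) is obtained by gluing the selectors of $u_1$ and $u_2$ on the measurable sets $\{u_1\ge u_2\}$ and $\{u_1<u_2\}$. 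The technical heart is condition (iii): for $v=(u_1\vee u_2)\wedge w$ with $w\in K$, I would add the variational inequalities satisfied by $u_1$ and $u_2$ (which hold by part (a)) tested respectively with $u_1\wedge(u_2\vee w)\in u_1\wedge K$ and $u_2\wedge(u_1\vee w)\in u_2\wedge K$, then use the pointwise identity
\[
(u_1\vee u_2-w)^+ = (u_1-w)^+\chi_{\{u_1\ge u_2\}} + (u_2-w)^+\chi_{\{u_2>u_1\}}
\]
together with the monotonicity of the double phase principal part on the coincidence regions to recombine the contributions into the required inequality.

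For part (iii), assuming the lattice condition, I would combine (i) and (ii)(b) by a Zorn-type procedure. Compactness of $\S$ in $\WH$ makes it separable; fix a countable dense subset $\{u_n\}\subset\S$ and, using upward directedness inductively, select $v_n\in\S$ with $v_n\ge v_{n-1}$ and $v_n\ge u_n$. Compactness forces the monotone sequence $\{v_n\}$ to converge strongly in $\WH$ along a subsequence, and by monotonicity the whole sequence converges to some $u^*\in\S$ with $v_n\le u^*$ a.e.\ in $\Omega$. For any $u\in\S$ one has $u_{n_k}\to u$ in $\WH$ along a subsequence, and passing to the limit (along a further subsequence with a.e.\ convergence) in $u_{n_k}\le v_{n_k}\le u^*$ yields $u\le u^*$. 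The smallest element $u_*$ is obtained by the dual construction using downward directedness. The principal obstacle throughout the argument is condition (iii) of Definition~\ref{D102} for $u_1\vee u_2$ in part (ii)(b), since the intrinsic non-symmetry of the variational-inequality setting makes the sign-compatible combination of the two tested inequalities the delicate step where the lattice condition on $K$ and the structure of $A$ must be carefully exploited.
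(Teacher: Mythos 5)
Parts (i), (ii)(a) and (iii) of your proposal follow essentially the same route as the paper: boundedness from (F3), weak compactness, the $(S_+)$-property of $A$ to upgrade to strong convergence and the generalized pseudomonotonicity of the lower-order operators to identify the limit as a member of $\S$; for (ii)(a), observing that the solution inequality holds on all of $K$ and that the lattice condition supplies condition (i) of Definitions \ref{D102}--\ref{D103}; and for (iii), the standard separability/directedness/compactness induction.

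For (ii)(b), however, your route diverges from the paper's, and it is here that there is a genuine gap. You propose to show directly that $u_1\vee u_2$ is a subsolution in the sense of Definition \ref{D102} and then invoke Theorem \ref{T102} with the pair $(u_1\vee u_2,\overline u)$. The paper does \emph{not} prove that the pointwise maximum of two subsolutions is a subsolution; instead, once (ii)(a) gives that $u_1$ and $u_2$ are each subsolutions with $u_i\vee K\subset K$, it invokes Theorem \ref{T4-1} with the \emph{pair} of subsolutions $\underline u_1=u_1$, $\underline u_2=u_2$ and the single supersolution $\overline u$, and Theorem \ref{T4-1} directly produces a solution $u$ with $\max\{u_1,u_2\}\le u\le\overline u$, hence $u\in\S$. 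Theorem \ref{T4-1} was formulated with a finite family of sub- and supersolutions, and with the penalization terms $T_i$, $T^j$, $U_i$, $U^j$ and the gluing of selectors on the partitions $\Omega_i$, $\Omega^j$, precisely to avoid having to show that $\max\{u_1,u_2\}$ is itself a subsolution.

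Your proposed recombination argument for Definition \ref{D102}(iii) does not go through as sketched. Adding the subsolution inequalities tested with $v_1=u_1\wedge(u_2\vee w)$ and $v_2=u_2\wedge(u_1\vee w)$ does not produce the increment $-(u_1\vee u_2-w)^+$ against the operator $A(u_1\vee u_2)$: e.g.\ on $\{u_1>u_2>w\}$ one has $v_1-u_1=u_2-u_1$ and $v_2-u_2=0$, whereas $-(u_1\vee u_2-w)^+=-(u_1-w)$, so the two do not match; and even where the lower-order increments align, the leading terms give $\langle Au_1,\cdot\rangle+\langle Au_2,\cdot\rangle$, from which $\langle A(u_1\vee u_2),\cdot\rangle$ cannot be extracted simply by ``monotonicity on the coincidence regions'' since $A$ is nonlinear and the gradients of $u_1$, $u_2$, $u_1\vee u_2$ differ on the relevant sets. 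Closing this gap would amount to re-deriving, in a special case, the penalization machinery already embodied in Theorem \ref{T4-1}. The fix is simply to invoke Theorem \ref{T4-1} as the paper does, rather than Theorem \ref{T102} after a max-of-subsolutions lemma.
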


In Section \ref{section_6}, as an application of  Theorem \ref{T102} and Theorem \ref{T103}, we are going to show that a class of generalized variational-hemivariational inequalities with the double phase operator as the leading operator of the form
\begin{equation}\label{106}
    \begin{aligned}
        u\in K\,:\, &\langle  Au, v-u\rangle+\int_{\Omega} j^\circ(\cdot,u,u; v-u)\,\diff x\\
        &+\int_{\Gamma} j_{\Gamma}^\circ(\cdot,u, u;  v- u)\,\diff \sigma\ge 0 \quad\text{for all } v\in K,
    \end{aligned}
\end{equation}
turn out to be a special case of \eqref{problem} only, see Theorem \ref{T601}. In Section \ref{section_7} we also study the case when the functions $f$ and $f_\Gamma$ need not be continuous (so $\F$ and $\F_\Gamma$ need not be pseudomonotone anymore). The idea in the proof is the usage of an fixed point argument, see Theorems \ref{theorem-fixed-point} and  \ref{theorem-discontinuous}.
Lastly, in Section \ref{section_8}, we construct nontrivial sub- and supersolutions of \eqref{problem} which can be applied to our results, see Theorem \ref{T-O} and Corollary \ref{C-O1}.

To the best of our knowledge, our results are new even in the case when $p$ and $q$ are constants. For double phase problems with variable exponents there are only few works, we mention the papers of \cite{Bahrouni-Radulescu-Winkert-2020} for the variable exponent  Baouendi-Grushin operator, of \cite{CGHW} for single-valued convection problems and  of \cite{Zeng-Radulescu-Winkert-2021} for obstacle problems involving multi-valued reaction terms with gradient dependence. Papers dealing with the constant exponent double phase \eqref{double_phase_operator} along with multi-valued right-hand sides can be found in \cite{Zeng-Bai-Gasinski-Winkert-2020} and \cite{Zeng-Gasinski-Winkert-Bai-2021} who studied obstacle problems involving the special case of Clarke's generalized gradients. Note that all these works are dealing with the coercive case.

Finally, we mention some recent results for single-valued double phase problems without constraints, such as,
\cite{Colasuonno-Squassina-2016} for eigenvalue problems for double phase problems, \cite{Gasinski-Papageorgiou-2019} for sign-changing solutions based on the Nehari manifold, \cite{Gasinski-Winkert-2020b} for general convection problems, \cite{Liu-Dai-2018} for superlinear double phase problems, \cite{Perera-Squassina-2018} for double phase problems via Morse theory, \cite{Shi-Radulescu-Repovs-Zhang-2020} for multiple solutions for double phase variational problems and \cite{Zhang-Radulescu-2018} for anisotropic double phase problems. As for multi-valued variational inequalities with leading $p$-Laplacian type operators we refer to \cite{Carl-Le-2015} for bounded domains, and \cite{Carl-Le-2019, Carl-Le-2022} for unbounded domains.

\section{Preliminaries}\label{section_2}

In this section we recall some results about variable exponent Sobolev space, Musielak-Orlicz Sobolev spaces and properties of the variable exponent double phase operator. The results are mainly taken from the books of \cite{Diening-Harjulehto-Hasto-Ruzicka-2011} and \cite{Harjulehto-Hasto-2019} as well as the papers of \cite{CGHW}, \cite{Fan-Zhao-2001} and  \cite{Kovacik-Rakosnik-1991}.

Let $\Omega\subset \R^N$ be a bounded domain with Lipschitz boundary $\partial \Omega$ and denote by $M(\Omega)$ the space of all measurable functions $u\colon\Omega\to\R$. Let $C_+(\overline\Omega)$ be a subset of $C(\overline\Omega)$ defined by
\begin{align*}
    C_+(\overline \Omega):=\l\{h\in C(\overline \Omega)\,:\,1<h(x)\text{ for all }x\in\overline \Omega\r\}.
\end{align*}
For any $r\in C_+(\overline \Omega)$, we define
\begin{align*}
    r_-:=\min_{x\in\overline \Omega}r(x)
    \quad \text{and}\quad
    r_+:=\max_{x\in\overline \Omega}r(x)
\end{align*}
and $r'\in C_+(\overline \Omega)$ stands for the conjugate variable exponent to $r$, namely,
\begin{align*}
    \frac{1}{r(x)}+\frac{1}{r'(x)}=1
    \quad\text{for all }x\in \overline \Omega.
\end{align*}

For $r\in C_+(\overline \Omega)$ fixed, the variable exponent Lebesgue space $L^{r(\cdot)}(\Omega)$ is defined by
\begin{align*}
    L^{r(\cdot)}(\Omega)=\left\{u\in M(\Omega)\,:\,\int_\Omega|u|^{r(x)}\,\diff x<+\infty\right\},
\end{align*}
equipped with the Luxemburg norm
\begin{align*}
    \|u\|_{r(\cdot)}:=\inf\left\{\lambda>0\,:\,\int_\Omega\left(\frac{|u|}{\lambda}\right)^{r(x)}\,\diff x\le 1\right\}.
\end{align*}
It is well-known that $L^{r(\cdot)}(\Omega)$ is a separable and reflexive Banach space. Furthermore, the dual space of $L^{r(\cdot)}(\Omega)$ is $L^{r'(\cdot)}(\Omega)$ and the following H\"older type inequality holds
\begin{align*}
    \int_\Omega|uv|\,\mathrm{d}x\le \left[\frac{1}{r_-}+\frac{1}{r'_-}\right]\|u\|_{r(\cdot)}\|v\|_{r'(\cdot)}\le 2\|u\|_{r(\cdot)}\|v\|_{r'(\cdot)}
\end{align*}
for all $u\in L^{r(\cdot)}(\Omega)$ and for all $v\in L^{r'(\cdot)}(\Omega)$. For $r_1,r_2\in C_+(\overline\Omega)$ with $r_1(x)\le r_2(x)$ for all $x\in\overline \Omega$ we have the continuous embedding
\begin{align*}
    L^{r_2(\cdot)}(\Omega)\hookrightarrow L^{r_1(\cdot)}(\Omega).
\end{align*}

In the same way, for any $\Gamma \subset \partial\Omega$, we define boundary variable exponent Sobolev spaces $L^{r(\cdot)}(\Gamma)$ with $r\in C(\Gamma)$, $r(x)>1$ for all $x\in\Gamma$ and norm $\|\cdot\|_{r(\cdot),\Gamma}$.

For any $r\in C_+(\overline \Omega)$, we consider the modular function $\rho_{r(\cdot)}\colon L^{r(\cdot)}(\Omega)\to \R$ given by
\begin{align}
    \label{defmodularf}
    \rho_{r(\cdot)}(u)=\int_\Omega|u|^{r(x)}\,\diff x\quad\text{for all $u\in L^{r(\cdot)}(\Omega)$}.
\end{align}
The following proposition states some important relations between the norm of $L^{r(\cdot)}(\Omega)$ and the modular function $\rho_{r(\cdot)}$ defined in \eqref{defmodularf}.
\begin{proposition}
    If $r\in C_+(\close)$ and $u, u_n\in \Lp{r(\cdot)}$, then we have the following assertions:
    \begin{enumerate}
        \item[\textnormal{(i)}]
            $\|u\|_{r(\cdot)}=\lambda \quad\Longleftrightarrow\quad \rho_{r(\cdot)}\l(\frac{u}{\lambda}\r)=1$ with $u \neq 0$;
        \item[\textnormal{(ii)}]
            $\|u\|_{r(\cdot)}<1$ (resp. $=1$, $>1$) $\quad\Longleftrightarrow\quad \rho_{r(\cdot)}(u)<1$ (resp. $=1$, $>1$);
        \item[\textnormal{(iii)}]
            $\|u\|_{r(\cdot)}<1$ $\quad\Longrightarrow\quad$ $\|u\|_{r(\cdot)}^{r_+} \leq \rho_{r(\cdot)}(u) \leq \|u\|_{r(\cdot)}^{r_-}$;
        \item[\textnormal{(iv)}]
            $\|u\|_{r(\cdot)}>1$ $\quad\Longrightarrow\quad$ $\|u\|_{r(\cdot)}^{r_-} \leq \rho_{r(\cdot)}(u) \leq \|u\|_{r(\cdot)}^{r_+}$;
        \item[\textnormal{(v)}]
            $\|u_n\|_{r(\cdot)} \to 0 \quad\Longleftrightarrow\quad\rho_{r(\cdot)}(u_n)\to 0$;
        \item[\textnormal{(vi)}]
            $\|u_n\|_{r(\cdot)}\to +\infty \quad\Longleftrightarrow\quad \rho_{r(\cdot)}(u_n)\to +\infty$.
    \end{enumerate}
\end{proposition}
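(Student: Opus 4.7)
The plan is to derive all six items from two ingredients: the definition of the Luxemburg norm together with the observation that, for fixed $u\in L^{r(\cdot)}(\Omega)$ with $u\not\equiv 0$, the ray function
\[
\Phi(\lambda):=\rho_{r(\cdot)}\!\left(\frac{u}{\lambda}\right)=\int_\Omega\left(\frac{|u|}{\lambda}\right)^{r(x)}\diff x,\qquad \lambda>0,
\]
is continuous, strictly decreasing, with $\Phi(\lambda)\to 0$ as $\lambda\to\infty$ (by dominated convergence, majorant $(|u|/\lambda_0)^{r(x)}$ for some $\lambda_0$ with $\Phi(\lambda_0)<\infty$) and $\Phi(\lambda)\to\infty$ as $\lambda\to 0^+$ (by monotone convergence, using $r(x)\geq r_->1$). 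Hence $\Phi$ attains the value $1$ at a unique point $\lambda^\star$, and by the definition of $\|u\|_{r(\cdot)}$ as an infimum one checks $\lambda^\star=\|u\|_{r(\cdot)}$. This proves (i).

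Next, (ii) is an immediate consequence of (i) combined with the strict monotonicity of $\Phi$: if $\|u\|_{r(\cdot)}<1$ then $\rho_{r(\cdot)}(u)=\Phi(1)<\Phi(\|u\|_{r(\cdot)})=1$, and symmetrically for the cases $=1$ and $>1$. For (iii) and (iv) I would use the identity from (i), namely $\int_\Omega(|u|/\lambda)^{r(x)}\diff x=1$ with $\lambda:=\|u\|_{r(\cdot)}$. If $\lambda>1$, then the pointwise estimates $\lambda^{r_-}\le \lambda^{r(x)}\le \lambda^{r_+}$ yield
\[
\frac{|u(x)|^{r(x)}}{\lambda^{r_+}}\le\frac{|u(x)|^{r(x)}}{\lambda^{r(x)}}\le\frac{|u(x)|^{r(x)}}{\lambda^{r_-}},
\]
so integrating and rearranging gives $\lambda^{r_-}\le\rho_{r(\cdot)}(u)\le\lambda^{r_+}$, which is (iv). The case $\lambda<1$ reverses the inequalities $\lambda^{r_\pm}$ and yields (iii).

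Finally, (v) and (vi) drop out of (iii) and (iv) in a few lines. Suppose $\|u_n\|_{r(\cdot)}\to 0$; then eventually $\|u_n\|_{r(\cdot)}<1$, so (iii) delivers $\rho_{r(\cdot)}(u_n)\le\|u_n\|_{r(\cdot)}^{r_-}\to 0$. Conversely, if $\rho_{r(\cdot)}(u_n)\to 0$, then by (ii) eventually $\|u_n\|_{r(\cdot)}<1$, and applying (iii) again gives $\|u_n\|_{r(\cdot)}^{r_+}\le\rho_{r(\cdot)}(u_n)\to 0$, whence $\|u_n\|_{r(\cdot)}\to 0$. Item (vi) is handled in the mirror-image way, using (iv) in the regime $\|u_n\|_{r(\cdot)}>1$ and (ii) for the threshold.

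The only nontrivial step is (i); once it is in hand, everything else is an elementary manipulation of the bounds $r_-\le r(x)\le r_+$. The potential subtlety in (i) is justifying the two limiting statements $\Phi(\lambda)\to 0$ and $\Phi(\lambda)\to\infty$ at the endpoints, which requires fixing a finite value $\Phi(\lambda_0)<\infty$ (available since $u\in L^{r(\cdot)}(\Omega)$) to apply dominated convergence at $\infty$ and using $r_->1$ to blow up the integrand at $0^+$; no further obstacle arises.
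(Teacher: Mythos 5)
The paper does not supply a proof of this proposition -- it is quoted as a standard fact about variable exponent Lebesgue spaces, with the work of Diening--Harjulehto--H\"ast\"o--R\r{u}\v{z}i\v{c}ka, Fan--Zhao, and Kov\'a\v{c}ik--R\'akosn\'ik cited earlier as the source. Your argument is the standard one found in those references: reduce everything to the ray function $\Phi(\lambda)=\rho_{r(\cdot)}(u/\lambda)$, use that $\Phi$ is continuous, strictly decreasing, and runs from $+\infty$ to $0$ to pin down the Luxemburg norm as the unique solution of $\Phi(\lambda)=1$, and then deduce (ii)--(vi) by squeezing the exponent between $r_-$ and $r_+$. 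The chain of implications is correct, including the delicate step (i) (the dominated-convergence majorant $|u|^{r(x)}$ for $\lambda\ge 1$ and $\lambda^{-r_+}|u|^{r(x)}$ for $\lambda<1$ is available precisely because $r_+<\infty$ on the compact set $\overline\Omega$, so $\Phi$ is everywhere finite and continuous), and (iii)--(iv) follow cleanly from integrating $\lambda^{r_\pm}\lessgtr\lambda^{r(x)}$ against the normalization $\Phi(\|u\|_{r(\cdot)})=1$. The only omission is the trivial case $u=0$ for items (ii)--(iii), which you implicitly assume away by working with $u\neq 0$ in (i); this costs nothing and is worth a half-sentence. Your remark that $r_->1$ is needed for $\Phi(\lambda)\to\infty$ at $0^+$ is stronger than necessary (only $r_->0$ and $u\not\equiv 0$ are used), but it is not an error. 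Overall, this is a correct and complete reconstruction of the standard proof.
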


For  $r\in C_+(\overline \Omega)$, we denote by $W^{1,r(\cdot)}(\Omega)$ the variable exponent Sobolev space given by
\begin{align*}
    W^{1,r(\cdot)}(\Omega)=\left\{u\in L^{r(\cdot)}(\Omega)\,:\,|\nabla u|\in L^{r(\cdot)}(\Omega)\right\}.
\end{align*}
We know that $W^{1,r(\cdot)}(\Omega)$ equipped with the norm
\begin{align*}
    \|u\|_{1,r(\cdot)}=\|u\|_{r(\cdot)}+\|\nabla u\|_{r(\cdot)}\quad\text{for all }u\in W^{1,r(\cdot)}(\Omega)
\end{align*}
is a separable and reflexive Banach space, where $\|\nabla u\|_{r(\cdot)}:=\|\,|\nabla u|\,\|_{r(\cdot)}$. We also consider the subspace $W_0^{1,r(\cdot)}(\Omega)$ of $W^{1,r(\cdot)}(\Omega)$ defined  by
\begin{align*}
    W_0^{1,r(\cdot)}(\Omega)=\overline{ C_0^\infty(\Omega)}^{\|\cdot\|_{1,r(\cdot)}}.
\end{align*}
From Poincar\'e's inequality we know that we can endow the space $W_0^{1,r(\cdot)}(\Omega)$ with the equivalent norm
\begin{align*}
    \|u\|_{1,r(\cdot),0}=\|\nabla u\|_{r(\cdot)}\quad\text{for all }u\in W_0^{1,r(\cdot)}(\Omega).
\end{align*}

We suppose now condition \textnormal{(H0)} and introduce the nonlinear function $\mathcal{H} \colon \Omega \times [0,\infty) \to [0,\infty)$ defined by
\begin{align*}
    \mathcal{H}(x,t):=t^{p(x)} +\mu(x)t^{q(x)} \quad\text{for all } (x,t)\in \Omega \times [0,\infty).
\end{align*}
Then, the corresponding Musielak-Orlicz space $\Lp{\mathcal{H}}$ is given
by
\begin{align*}
    L^{\mathcal{H}}(\Omega)=\left \{u \in M(\Omega) \,:\,\rho_{\mathcal{H}}(u) < +\infty \right \},
\end{align*}
endowed with the norm
\begin{align*}
    \|u\|_{\mathcal{H}} = \inf \left \{ \tau >0 : \rho_{\mathcal{H}}\left(\frac{u}{\tau}\right) \leq 1  \right \},
\end{align*}
where the related modular to $\mathcal{H}$ is given by
\begin{align*}
    \rho_{\mathcal{H}}(u) = \into \mathcal{H} (x,|u|)\,\diff x.
\end{align*}
The corresponding Musielak-Orlicz Sobolev space $W^{1,\mathcal{H}}(\Omega)$ is defined by
\begin{align*}
    W^{1,\mathcal{H}}(\Omega)= \Big \{u \in L^\mathcal{H}(\Omega) \,:\, |\nabla u| \in L^{\mathcal{H}}(\Omega) \Big\}
\end{align*}
equipped with the norm
\begin{align*}
    \|u\|_{1,\mathcal{H}}= \|\nabla u \|_{\mathcal{H}}+\|u\|_{\mathcal{H}},
\end{align*}
where $\|\nabla u\|_\mathcal{H}=\|\,|\nabla u|\,\|_{\mathcal{H}}$. Moreover, we denote by $W^{1,\mathcal{H}}_0(\Omega)$ the completion of $C^\infty_0(\Omega)$ in $W^{1,\mathcal{H}}(\Omega)$. We equip the space $\Wpzero{\mathcal{H}}$ with the equivalent norm
\begin{align*}
    \|u\|_{1,\mathcal{H},0}=\|\nabla u\|_{\mathcal{H}} \quad\text{for all }u \in \Wpzero{\mathcal{H}},
\end{align*}
see \cite[Proposition 2.18]{CGHW}.
We know that the spaces $L^{\mathcal{H}}(\Omega)$, $W^{1,\mathcal{H}}_0(\Omega)$  and $W^{1,\mathcal{H}}(\Omega)$ are reflexive Banach spaces, see \cite[Proposition 2.12]{CGHW}.

The next proposition shows the relation between the norm $\|\cdot\|_\mathcal{H}$ and the modular $\rho_{\mathcal{H}}$, see \cite[Proposition 2.13]{CGHW}.

\begin{proposition}
    Let hypotheses \textnormal{(H0)} be satisfied. Then the following holds:
    \begin{enumerate}
        \item[\textnormal{(i)}]
            If $u\neq 0$, then $\|u\|_{\mathcal{H}}=\lambda$ if and only if $ \rho_{\mathcal{H}}(\frac{u}{\lambda})=1$;
        \item[\textnormal{(ii)}]
            $\|u\|_{\mathcal{H}}<1$ (resp.\,$>1$, $=1$) if and only if $ \rho_{\mathcal{H}}(u)<1$ (resp.\,$>1$, $=1$);
        \item[\textnormal{(iii)}]
            If $\|u\|_{\mathcal{H}}<1$, then $\|u\|_{\mathcal{H}}^{q_+}\leqslant \rho_{\mathcal{H}}(u)\leqslant\|u\|_{\mathcal{H}}^{p_-}$;
        \item[\textnormal{(iv)}]
            If $\|u\|_{\mathcal{H}}>1$, then $\|u\|_{\mathcal{H}}^{p_-}\leqslant \rho_{\mathcal{H}}(u)\leqslant\|u\|_{\mathcal{H}}^{q_+}$;
        \item[\textnormal{(v)}]
            $\|u\|_{\mathcal{H}}\to 0$ if and only if $ \rho_{\mathcal{H}}(u)\to 0$;
        \item[\textnormal{(vi)}]
            $\|u\|_{\mathcal{H}}\to +\infty$ if and only if $ \rho_{\mathcal{H}}(u)\to +\infty$.
        \item[\textnormal{(vii)}]
            $\|u\|_{\mathcal{H}}\to 1$ if and only if $ \rho_{\mathcal{H}}(u)\to 1$.
        \item[\textnormal{(viii)}]
            If $u_n \to u$ in $\Lp{\mathcal{H}}$, then $\rho_{\mathcal{H}} (u_n) \to \rho_{\mathcal{H}} (u)$.
    \end{enumerate}
\end{proposition}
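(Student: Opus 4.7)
The plan is to derive everything from one fundamental scaling estimate for the modular, together with the standard characterisation of the Luxemburg norm as an infimum. The estimate is: for every $u\in L^\Hi(\Om)$ and every $\lambda>0$,
\begin{align*}
\min\{\lambda^{p_-},\lambda^{p_+},\lambda^{q_-},\lambda^{q_+}\}\,\rho_\Hi(u)\le \rho_\Hi(\lambda u)\le\max\{\lambda^{p_-},\lambda^{p_+},\lambda^{q_-},\lambda^{q_+}\}\,\rho_\Hi(u).
\end{align*}
This follows directly from the pointwise identity $\Hi(x,\lambda t)=\lambda^{p(x)}t^{p(x)}+\mu(x)\lambda^{q(x)}t^{q(x)}$. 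Indeed, for $\lambda\in(0,1)$ one has $\lambda^{q(x)}\le\lambda^{p(x)}\le\lambda^{p_-}$ and $\lambda^{q(x)}\ge\lambda^{q_+}$; for $\lambda>1$ these inequalities reverse. Because $p(x)\le q(x)$, in each case both summands in $\Hi(x,\lambda t)$ can be sandwiched between the same two powers of $\lambda$, times $\Hi(x,t)$, and then integration over $\Om$ gives the scaling estimate.

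With this in hand, (i) and (ii) reduce to the standard Luxemburg-norm argument. The function $\varphi(\tau):=\rho_\Hi(u/\tau)$ is finite for large $\tau$, non-increasing in $\tau$, and continuous on $(0,\infty)$ (continuity uses dominated convergence together with the scaling estimate to control $\Hi(x,u/\tau)$ by a fixed integrable envelope on compact subintervals of $\tau$). Hence the infimum defining $\|u\|_\Hi$ is attained at the unique $\lambda>0$ with $\varphi(\lambda)=1$ whenever $u\neq 0$, which gives (i); (ii) then follows by comparing the monotone function $\varphi$ at $\tau=1$ with $\varphi(\|u\|_\Hi)=1$.

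Items (iii) and (iv) are obtained by applying the scaling estimate with $\lambda=\|u\|_\Hi$ to the normalised function $u/\lambda$, using $\rho_\Hi(u/\lambda)=1$ from (i): for $\|u\|_\Hi<1$ one has $\lambda\in(0,1)$, so
\begin{align*}
\|u\|_\Hi^{q_+}=\lambda^{q_+}\cdot 1\le\rho_\Hi(\lambda\cdot(u/\lambda))=\rho_\Hi(u)\le\lambda^{p_-}\cdot 1=\|u\|_\Hi^{p_-},
\end{align*}
and analogously for $\|u\|_\Hi>1$, yielding (iv). From (iii) and (iv) the equivalences (v), (vi), (vii) are then immediate: a sequence of norms tends to $0$, $\infty$, or $1$ iff the sandwiching powers do, iff the modulars do.

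The main obstacle, and the only part requiring some care, is (viii): norm convergence $u_n\to u$ in $L^\Hi(\Om)$ implies $\rho_\Hi(u_n)\to\rho_\Hi(u)$. The natural approach is to pass to a subsequence converging a.e., extract an $L^\Hi$-dominating function by a standard subsequence-of-subsequence argument (using (iii)--(v) to convert norm control into modular control), and then apply Lebesgue's dominated convergence theorem to the pointwise convergent sequence $\Hi(x,|u_n(x)|)\to\Hi(x,|u(x)|)$. The fact that the whole sequence converges, not just a subsequence, follows from the usual Urysohn-type argument: every subsequence has a further subsequence with $\rho_\Hi(u_{n_k})\to\rho_\Hi(u)$, and so the full sequence must do so as well. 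Once (viii) is secured the proposition is complete.
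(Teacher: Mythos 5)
The paper does not prove this proposition itself; it is stated verbatim with a citation to Crespo-Blanco--Gasi\'nski--Harjulehto--Winkert, Proposition~2.13, so there is no in-paper proof to compare against. Judged on its own, your argument is correct and self-contained. The scaling estimate is the right engine: since $p_-<p_+<q_+$ and $p_-<q_-<q_+$ under (H0), for $0<\lambda<1$ both coefficients $\lambda^{p(x)}$ and $\lambda^{q(x)}$ in $\mathcal{H}(x,\lambda t)$ lie between $\lambda^{q_+}$ and $\lambda^{p_-}$, so $\lambda^{q_+}\rho_{\mathcal H}(u)\le\rho_{\mathcal H}(\lambda u)\le\lambda^{p_-}\rho_{\mathcal H}(u)$ (and with the bounds exchanged for $\lambda>1$); strict monotonicity and continuity of $\tau\mapsto\rho_{\mathcal H}(u/\tau)$ for $u\ne 0$ then give (i)--(ii), specialising $\lambda=\|u\|_{\mathcal H}$ gives (iii)--(iv), and (v)--(vii) are immediate squeezes (the case $\rho_{\mathcal H}(u_n)\to 1\Rightarrow\|u_n\|_{\mathcal H}\to 1$ needs a one-line contradiction using both (iii) and (iv), but this is routine). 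For (viii), your plan is sound but is the one place you should write out details: from $\|u_n-u\|_{\mathcal H}\to 0$ pass to a fast subsequence, build the telescoped majorant $g=|u_{n_1}|+\sum_k|u_{n_{k+1}}-u_{n_k}|$, use Fatou together with (ii)--(iv) to see $g\in L^{\mathcal H}(\Omega)$ and hence $\mathcal{H}(\cdot,g)\in L^1(\Omega)$, note that $\rho_{\mathcal H}(u_n-u)\to 0$ forces convergence in measure (so the a.e.\ limit of the subsequence is $u$), apply dominated convergence, and close with the subsequence-of-subsequence principle. All of these steps are standard and hold here because $p_+,q_+<\infty$ gives the $\Delta_2$-type behaviour you need.
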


We now equip the space $\Wp{\mathcal{H}}$ with the equivalent norm
\begin{align*}
    \|u\|_{\hat{\rho}_\mathcal{H}}:=\inf & \Biggl\{\lambda >0 \,:\,
    \int_\Omega
    \Biggl[\l|\frac{\nabla u}{\lambda}\r|^{p(x)}+\mu(x)\l|\frac{\nabla u}{\lambda}\r|^{q(x)} \\
    &\qquad\qquad\qquad +\l|\frac{u}{\lambda}\r|^{p(x)}+\mu(x)\l|\frac{u}{\lambda}\r|^{q(x)}\Biggl]\,\diff x\le1\Biggl\},
\end{align*}
where the modular $\hat{\rho}_\mathcal{H}$ is given by
\begin{align*}
    \hat{\rho}_\mathcal{H}(u) =\into \l(|\nabla u|^{p(x)}+\mu(x)|\nabla u|^{q(x)}\r)\,\diff x+\into \l(|u|^{p(x)}+\mu(x)|u|^{q(x)}\r)\,\diff x
\end{align*}
for $u \in \Wp{\mathcal{H}}$.

The next proposition can be found in \cite[Proposition 2.14]{CGHW}.

\begin{proposition}
    Let hypotheses \textnormal{(H0)} be satisfied. Then the following holds:
    \begin{enumerate}
        \item[\textnormal{(i)}]
            If $y\neq 0$, then $\|y\|_{\hat{\rho}_\mathcal{H}}=\lambda$ if and only if $ \hat{\rho}_{\mathcal{H}}(\frac{y}{\lambda})=1$;
        \item[\textnormal{(ii)}]
            $\|y\|_{\hat{\rho}_\mathcal{H}}<1$ (resp.\,$>1$, $=1$) if and only if $ \hat{\rho}_{\mathcal{H}}(y)<1$ (resp.\,$>1$, $=1$);
        \item[\textnormal{(iii)}]
            If $\|y\|_{\hat{\rho}_\mathcal{H}}<1$, then $\|y\|_{\hat{\rho}_\mathcal{H}}^{q_+}\leqslant \hat{\rho}_{\mathcal{H}}(y)\leqslant\|y\|_{\hat{\rho}_\mathcal{H}}^{p_-}$;
        \item[\textnormal{(iv)}]
            If $\|y\|_{\hat{\rho}_\mathcal{H}}>1$, then $\|y\|_{\hat{\rho}_\mathcal{H}}^{p_-}\leqslant \hat{\rho}_{\mathcal{H}}(y)\leqslant\|y\|_{\hat{\rho}_\mathcal{H}}^{q_+}$;
        \item[\textnormal{(v)}]
            $\|y\|_{\hat{\rho}_\mathcal{H}}\to 0$ if and only if $ \hat{\rho}_{\mathcal{H}}(y)\to 0$;
        \item[\textnormal{(vi)}]
            $\|y\|_{\hat{\rho}_\mathcal{H}}\to +\infty$ if and only if $ \hat{\rho}_{\mathcal{H}}(y)\to +\infty$.
        \item[\textnormal{(vii)}]
            $\|y\|_{\hat{\rho}_\mathcal{H}}\to 1$ if and only if $ \hat{\rho}_{\mathcal{H}}(y)\to 1$.
        \item[\textnormal{(viii)}]
            If $u_n \to u$ in $\Wp{\mathcal{H}}$, then $\hat{\rho}_\mathcal{H} (u_n) \to \hat{\rho}_\mathcal{H} (u)$.
    \end{enumerate}
\end{proposition}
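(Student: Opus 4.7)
The plan is to exploit the fact that $\hat{\rho}_\mathcal{H}$ is a convex modular on $W^{1,\mathcal{H}}(\Omega)$ that is essentially the sum of two Luxemburg-type modulars (one on $u$ and one on $\nabla u$), so the argument closely parallels that of the previous proposition for $\rho_\mathcal{H}$ on $L^\mathcal{H}(\Omega)$. The pivotal ingredient is the pointwise scaling identity: since $p(x), q(x) \in [p_-, q_+]$ for every $x \in \overline\Omega$, one has, for any $y \in W^{1,\mathcal{H}}(\Omega)$ and any $\lambda > 0$,
\begin{equation*}
    \min\!\bigl(\lambda^{p_-},\lambda^{q_+}\bigr)\,\hat{\rho}_\mathcal{H}(y) \;\leq\; \hat{\rho}_\mathcal{H}(\lambda y) \;\leq\; \max\!\bigl(\lambda^{p_-},\lambda^{q_+}\bigr)\,\hat{\rho}_\mathcal{H}(y).
\end{equation*}
Combined with convexity, dominated convergence gives that $t \mapsto \hat{\rho}_\mathcal{H}(y/t)$ is continuous and strictly decreasing on $(0,\infty)$ for $y \neq 0$, tends to $+\infty$ as $t \to 0^+$, and to $0$ as $t \to \infty$. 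Item \textnormal{(i)} then follows immediately from the definition of the Luxemburg norm.

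With \textnormal{(i)} in hand, items \textnormal{(ii)}--\textnormal{(vii)} are obtained mechanically. For \textnormal{(iii)}--\textnormal{(iv)}, I set $\lambda := \|y\|_{\hat{\rho}_\mathcal{H}}$, use $\hat{\rho}_\mathcal{H}(y/\lambda) = 1$ from \textnormal{(i)}, and apply the scaling inequality to $y = \lambda \cdot (y/\lambda)$: if $\lambda < 1$ the upper/lower bounds invert exactly as stated, and similarly for $\lambda > 1$. Item \textnormal{(ii)} is the qualitative version of the same scaling comparison. Items \textnormal{(v)}, \textnormal{(vi)}, \textnormal{(vii)} follow directly by squeezing: e.g.\ if $\|y_n\|_{\hat{\rho}_\mathcal{H}} \to 0$, then eventually $\|y_n\|_{\hat{\rho}_\mathcal{H}} < 1$ and \textnormal{(iii)} gives $\hat{\rho}_\mathcal{H}(y_n) \leq \|y_n\|_{\hat{\rho}_\mathcal{H}}^{p_-} \to 0$, and the converse is symmetric; the case $\|y_n\|_{\hat{\rho}_\mathcal{H}} \to 1$ in \textnormal{(vii)} follows by combining \textnormal{(iii)} and \textnormal{(iv)}.

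The main obstacle will be \textnormal{(viii)}: norm convergence $u_n \to u$ in $W^{1,\mathcal{H}}(\Omega)$ yields $\hat{\rho}_\mathcal{H}(u_n - u) \to 0$ through \textnormal{(v)}, but since $\hat{\rho}_\mathcal{H}$ is nonlinear this does not automatically give $\hat{\rho}_\mathcal{H}(u_n) \to \hat{\rho}_\mathcal{H}(u)$. My approach is the subsequence principle: pass to any subsequence of $(u_n)$, and from it extract a further subsequence $(u_{n_k})$ along which $u_{n_k} \to u$ and $\nabla u_{n_k} \to \nabla u$ pointwise a.e.\ in $\Omega$ (using that $\|\cdot\|_\mathcal{H}$-convergence implies convergence in measure, hence a.e.\ along a subsequence). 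Then the integrands converge pointwise a.e.; for domination, write
\begin{equation*}
    |\nabla u_{n_k}|^{p(x)} \leq 2^{q_+-1}\bigl(|\nabla u|^{p(x)} + |\nabla (u_{n_k}-u)|^{p(x)}\bigr),
\end{equation*}
and likewise for the other three terms. The ``error'' parts have $L^1$-norms controlled by $\hat{\rho}_\mathcal{H}(u_{n_k}-u) \to 0$, so their sum is a sequence converging to $0$ in $L^1(\Omega)$; together with the fixed $L^1$ piece involving $u$, a Vitali convergence argument (uniform integrability plus a.e.\ convergence) yields $\hat{\rho}_\mathcal{H}(u_{n_k}) \to \hat{\rho}_\mathcal{H}(u)$. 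Since every subsequence admits such a sub-subsequence, the full sequence converges, proving \textnormal{(viii)}.
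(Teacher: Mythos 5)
The paper does not actually contain a proof of this proposition: it is imported verbatim from \cite[Proposition 2.14]{CGHW}, and the text of the present paper simply cites that reference. So there is no in-paper argument to compare against; I can only assess your proof on its own merits.

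Your argument is correct and is the standard one for Luxemburg norms built from modulars with uniformly bounded exponents. The pointwise scaling bound
\[
\min\bigl(\lambda^{p_-},\lambda^{q_+}\bigr)\,\hat{\rho}_{\mathcal{H}}(y)\le\hat{\rho}_{\mathcal{H}}(\lambda y)\le\max\bigl(\lambda^{p_-},\lambda^{q_+}\bigr)\,\hat{\rho}_{\mathcal{H}}(y)
\]
is exactly what the structure $t^{p(x)}+\mu(x)t^{q(x)}$ with $p_-\le p(x)<q(x)\le q_+$ provides, and from it the continuity, strict monotonicity and the limiting behaviour of $t\mapsto\hat{\rho}_{\mathcal{H}}(y/t)$ on $(0,\infty)$ for $y\neq 0$ follow; this drives (i), and (ii)--(vii) are indeed the mechanical consequences you describe (writing $y=\lambda\cdot(y/\lambda)$ with $\lambda=\|y\|_{\hat\rho_{\mathcal H}}$ and $\hat{\rho}_{\mathcal{H}}(y/\lambda)=1$). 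For (viii), your subsequence-plus-generalized-domination argument is sound: $\hat{\rho}_{\mathcal{H}}(u_n-u)\to 0$ by (v), the $2^{q_+-1}$ splitting gives dominating functions of the form $2^{q_+-1}\bigl(g_u+g_{u_{n_k}-u}\bigr)$ with $g_{u_{n_k}-u}\to 0$ in $L^1(\Omega)$ and a.e.\ along the subsequence, so Pratt's lemma (or Vitali, as you phrase it) yields $\hat{\rho}_{\mathcal{H}}(u_{n_k})\to\hat{\rho}_{\mathcal{H}}(u)$, and the subsequence principle upgrades this to the full sequence. One small remark worth making explicit: the finiteness of $\hat{\rho}_{\mathcal{H}}(y/t)$ for every $t>0$ (needed for the dominated-convergence step in the continuity argument) is itself a consequence of the same scaling bound together with $\hat{\rho}_{\mathcal{H}}(y)<\infty$ for $y\in W^{1,\mathcal{H}}(\Omega)$.
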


It turns out that $\|\cdot\|_{\hat{\rho}_\mathcal{H}}$ is a uniformly convex norm on $\WH$ and satisfies the Radon-Riesz (or Kadec-Klee) property with respect to the modular, see \cite[Propositions 2.15 and 2.19]{CGHW}.

\begin{proposition}
    Let hypotheses \textnormal{(H0)} be satisfied.
    \begin{enumerate}
        \item[\textnormal{(i)}]
            The norm $\|  \cdot \|_{ \hat{\rho}_{\mathcal{H}} }$ on $\Wp{\mathcal{H}}$ is uniformly convex.
        \item[\textnormal{(ii)}]
            For any sequence $\{u_n\}_{n \in \N} \subseteq \Wp{\mathcal{H}}$ such that
            \begin{equation*}
                u_n \weak u \quad \text{in }\Wp{\mathcal{H}}\quad\text{and}\quad
                \hat{\rho}_{\mathcal{H}} (u_n) \to \hat{\rho}_{\mathcal{H}} (u)
            \end{equation*}
            it holds that $u_n \to u$ in $\Wp{\mathcal{H}}$.
        \item[\textnormal{(iii)}]
            The norm $\|  \cdot \|_{ 1, \mathcal{H}, 0 }$ on $\Wpzero{\mathcal{H}}$ is uniformly convex.
        \item[\textnormal{(iv)}]
            For any sequence $\{u_n\}_{n \in \N} \subseteq \Wpzero{\mathcal{H}}$ such that
            \begin{equation*}
                u_n \weak u \quad \text{in }\Wpzero{\mathcal{H}}\quad\text{and}\quad
                \rho_{\mathcal{H}} ( \nabla u_n ) \to \rho_{\mathcal{H}} ( \nabla u )
            \end{equation*}
            it holds that $u_n \to u$ in $\Wpzero{\mathcal{H}}$.
    \end{enumerate}
\end{proposition}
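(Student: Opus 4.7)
The plan is to establish all four assertions together. Parts (iii) and (iv) are the exact analogues of (i) and (ii) for the subspace $\Wpzero{\Hi}$ equipped with the equivalent norm $\|\cdot\|_{1,\Hi,0}=\|\nabla\cdot\|_{\Hi}$; their proofs reduce to the same argument applied to the modular $\rho_\Hi(\nabla u)$ in place of $\hat{\rho}_\Hi(u)$, using the modular-norm equivalences already recorded for $\|\cdot\|_\Hi$. I therefore concentrate on (i) and (ii).

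For (i), the starting point is a pointwise Clarkson-type inequality for the integrand $\Hi(x,t)=t^{p(x)}+\mu(x)t^{q(x)}$. Since $1<p_-\le p(x)<q(x)\le q_+<\infty$, each of the functions $s\mapsto |s|^{p(x)}$ and $s\mapsto |s|^{q(x)}$ is uniformly convex on $\R^N$ with a modulus that depends only on $p_\pm,q_\pm$ (treating separately the cases $r\ge 2$ via Clarkson's first inequality and $1<r<2$ via Clarkson's second inequality, both of which survive uniformly in $r$ as long as $r$ stays in a compact subinterval of $(1,\infty)$). Consequently, for every $\varepsilon\in(0,1)$ there exists $\delta=\delta(\varepsilon)>0$, independent of $x\in\close$, such that for all $a,b\in\R^N$ with $|a-b|\ge\varepsilon\max\{|a|,|b|\}$,
\[
\Hi\!\left(x,\Big|\tfrac{a+b}{2}\Big|\right)+\delta\,\Hi(x,|a-b|)\le \tfrac{1}{2}\bigl(\Hi(x,|a|)+\Hi(x,|b|)\bigr),
\]
the additivity in the two phases allowing one to handle the $t^{p(x)}$- and $\mu(x)t^{q(x)}$-terms independently and then sum with the nonnegative weight $\mu(x)$. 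Applying this simultaneously to $(u,v)$ and $(\nabla u,\nabla v)$ and integrating over $\Omega$ yields a modular inequality of the form
\[
\hat{\rho}_\Hi\!\left(\tfrac{u+v}{2}\right)\le (1-\delta')\,\tfrac{1}{2}\!\left(\hat{\rho}_\Hi(u)+\hat{\rho}_\Hi(v)\right)
\]
whenever $u$ and $v$ are modularly separated by a fixed amount. Combined with the norm-modular equivalences of the preceding proposition, this translates into uniform convexity of $\|\cdot\|_{\hat{\rho}_\Hi}$ on $\WH$.

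For (ii), I would invoke the standard mean-argument. Assume $u_n\weak u$ and $\hat{\rho}_\Hi(u_n)\to\hat{\rho}_\Hi(u)$. Convexity gives $\hat{\rho}_\Hi\!\left(\tfrac{u_n+u}{2}\right)\le \tfrac{1}{2}\!\left(\hat{\rho}_\Hi(u_n)+\hat{\rho}_\Hi(u)\right)\to\hat{\rho}_\Hi(u)$, while $\tfrac{u_n+u}{2}\weak u$ together with weak lower semicontinuity of the convex functional $\hat{\rho}_\Hi$ yields $\liminf_n \hat{\rho}_\Hi\!\left(\tfrac{u_n+u}{2}\right)\ge\hat{\rho}_\Hi(u)$. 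Hence $\hat{\rho}_\Hi\!\left(\tfrac{u_n+u}{2}\right)\to \hat{\rho}_\Hi(u)$; applying the quantitative modular uniform convexity of (i) to the pair $(u_n,u)$ forces $\hat{\rho}_\Hi(u_n-u)\to 0$, and the norm-modular equivalence then gives $\|u_n-u\|_{\hat{\rho}_\Hi}\to 0$.

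The main obstacle is obtaining the Clarkson modulus $\delta(\varepsilon)$ \emph{uniformly} in $x\in\close$, since both exponents $p(x)$ and $q(x)$ vary. This is resolved by the continuity of $p,q$ on the compact set $\close$ together with $1<p_-$ and $q_+<\infty$, so that $\delta$ depends only on $\varepsilon$ and on $p_-,p_+,q_-,q_+$. A secondary technical nuisance is that $\mu(\cdot)$ may vanish on a set of positive measure, leaving the $q(x)$-phase pointwise degenerate there; however, the $p(x)$-phase is always nondegenerate and the two phases enter the modular additively with nonnegative weights, so the integrated inequality retains its full strength.
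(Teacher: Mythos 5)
The paper does not actually prove this proposition; the statement is imported wholesale from \cite[Propositions 2.15 and 2.19]{CGHW}, which in turn rest on the Harjulehto--H\"ast\"o criterion that a uniformly convex generalized Orlicz function satisfying an $(\mathrm{aDec})$-type upper growth bound produces a uniformly convex Luxemburg norm. Your sketch unpacks exactly that argument from first principles --- the pointwise Clarkson-type estimate with modulus controlled by $p_\pm,q_\pm$ is the uniform convexity of the $\Phi$-function $\mathcal{H}(x,\cdot)$, and the two steps you gloss over (integrating past the exceptional set where $|a-b|<\varepsilon\max\{|a|,|b|\}$, and passing from a uniformly convex modular to a uniformly convex norm) are precisely where $q_+<\infty$, encoded in the norm--modular comparisons $\|u\|^{q_+}\le\hat\rho_{\mathcal{H}}(u)\le\|u\|^{p_-}$ for $\|u\|\le 1$, is invoked --- so the two arguments agree in substance, yours being a de-abstracted version of the cited machinery.
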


Now we introduce the seminormed space
\begin{align*}
    L^{q(\cdot)}_\mu(\Omega)=\left \{u\in M(\Omega) \,:\, \into \mu(x) | u|^{q(x)} \,\diff x< +\infty \right \}
\end{align*}
and endow it with the seminorm
\begin{align*}
    \|u\|_{q(\cdot),\mu} =\inf \left \{ \tau >0 \,:\, \into \mu(x) \l(\frac{|u|}{\tau}\r)^{q(x)} \,\diff x  \leq 1  \right \}.
\end{align*}

The following embeddings are stated in \cite[Proposition 2.16]{CGHW}.

\begin{proposition}
    \label{proposition_embeddings}
    Let hypotheses \textnormal{(H0)} be satisfied and let $p^*(\cdot)$, $p_*(\cdot)$ given in \eqref{critical_exponent} be the critical exponents to $p(\cdot)$.
    \begin{enumerate}
        \item[\textnormal{(i)}]
            $\Lp{\mathcal{H}} \hookrightarrow \Lp{r(\cdot)}$, $\Wp{\mathcal{H}}\hookrightarrow \Wp{r(\cdot)}$, $\Wpzero{\mathcal{H}}\hookrightarrow \Wpzero{r(\cdot)}$ are continuous for all $r\in C(\close)$ with $1\leq r(x)\leq p(x)$ for all $x \in \Omega$;
        \item[\textnormal{(ii)}]
            $\Wp{\mathcal{H}} \hookrightarrow \Lp{r(\cdot)}$ is compact for $r \in C(\close) $ with $ 1 \leq r(x) < p^*(x)$ for all $x\in \close$;
        \item[\textnormal{(iii)}]
            The trace operator $i_{r(\cdot)}: \Wp{\mathcal{H}} \hookrightarrow \Lprand{r(\cdot)}$ is compact for $r \in C(\close) $ with $ 1 \leq r(x) < p_*(x)$ for all $x\in \close$;
        \item[\textnormal{(iv)}]
            $\Lp{\mathcal{H}} \hookrightarrow L^{q(\cdot)}_\mu(\Omega)$ is continuous;
        \item[\textnormal{(v)}]
            $L^{q(\cdot)}(\Omega) \hookrightarrow \Lp{\mathcal{H}}$ is continuous;
        \item[\textnormal{(vi)}]
            $\Wp{\mathcal{H}}\hookrightarrow \Lp{\mathcal{H}}$ is compact.
    \end{enumerate}
\end{proposition}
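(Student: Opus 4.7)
The plan is to reduce each of the six embeddings to a known fact about variable exponent Lebesgue or Sobolev spaces by a pointwise modular comparison on the generalized $N$-function $\mathcal{H}(x,t)=t^{p(x)}+\mu(x)t^{q(x)}$, combined with the modular-to-norm equivalences already established for $\rho_{\mathcal{H}}$ and $\hat{\rho}_{\mathcal{H}}$ in the preceding propositions.

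For the \emph{continuous} embeddings (i), (iv), (v), I would work entirely at the modular level. The inequality $\mathcal{H}(x,t)\ge t^{p(x)}$ gives $\rho_{p(\cdot)}(u)\le\rho_{\mathcal{H}}(u)$ for every measurable $u$, and hence $\|u\|_{p(\cdot)}\le C\|u\|_{\mathcal{H}}$ after distinguishing the regimes $\|u\|_{\mathcal{H}}\le 1$ and $\|u\|_{\mathcal{H}}>1$ via the two-sided modular-norm estimates. Combining this with the classical continuous embedding $L^{p(\cdot)}(\Omega)\hookrightarrow L^{r(\cdot)}(\Omega)$ for $r(x)\le p(x)$ on a bounded domain settles the Lebesgue case of (i); applying the same argument to $u$ and to each component of $\nabla u$ gives the Sobolev statements, and the estimate passes to the $C_0^\infty$-closure to yield the $W^{1,\mathcal{H}}_{0}$ version. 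Part (iv) is the dual pointwise inequality $\mu(x)t^{q(x)}\le\mathcal{H}(x,t)$, handled in the same fashion at the seminorm level. For (v), using $\mu\in L^{\infty}(\Omega)$ and $p(x)\le q(x)$, I would bound $\mathcal{H}(x,t)\le (1+\|\mu\|_{\infty})(1+t^{q(x)})$, which yields $L^{q(\cdot)}(\Omega)\hookrightarrow L^{\mathcal{H}}(\Omega)$ after another case split on the size of $\|u\|_{q(\cdot)}$.

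For the \emph{compact} embeddings (ii), (iii), (vi), I would bootstrap from (i): the continuous inclusion $W^{1,\mathcal{H}}(\Omega)\hookrightarrow W^{1,p(\cdot)}(\Omega)$ reduces everything to the classical variable-exponent theory. Then (ii) follows from the known compact embedding $W^{1,p(\cdot)}(\Omega)\hookrightarrow L^{r(\cdot)}(\Omega)$ for $r(x)<p^{*}(x)$; (iii) follows from the compactness of the trace operator $W^{1,p(\cdot)}(\Omega)\to L^{r(\cdot)}(\partial\Omega)$ for $r(x)<p_{*}(x)$, which is available under the Lipschitz hypothesis on $\partial\Omega$; and (vi) is the special case of (ii) with $r=p$, since $p(x)<p^{*}(x)$ under (H0).

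The main technical obstacle will be the bookkeeping in the conversion between modular and norm estimates. Unlike in the classical Orlicz setting, the variable-exponent modular fails to be homogeneous in the scaling parameter $\lambda$, so each passage from a modular inequality of the form $\rho_{\mathcal{H}}(u)\le\rho_{\mathcal{H}}(v)$ to a norm inequality has to distinguish whether the relevant norms exceed one or not, producing different powers $p_{-}$ and $q_{+}$ in the two regimes. A secondary point is keeping track of how these exponents propagate through the chain $W^{1,\mathcal{H}}\hookrightarrow W^{1,p(\cdot)}\hookrightarrow L^{r(\cdot)}$ in order to obtain explicit embedding constants. Once these accounting issues are handled, every item reduces to an elementary pointwise inequality on $\mathcal{H}$ combined with a classical variable-exponent embedding theorem.
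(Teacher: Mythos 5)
The proposition is cited in the paper from \cite[Proposition 2.16]{CGHW} without a proof, so there is no in-paper argument to compare against; I evaluate your plan on its own merits. Your treatment of (i)--(v) via pointwise comparison of the $\mathcal{H}$-modular with the $p(\cdot)$- and $q(\cdot)$-modulars, followed by the modular-to-norm conversion and reduction to classical variable-exponent embedding and trace theorems, is sound, and the caveat about the non-homogeneity of the modular forcing a case split on whether norms exceed one is exactly the right bookkeeping concern. The remarks on (ii) and (iii) via the continuous inclusion $W^{1,\mathcal{H}}(\Omega)\hookrightarrow W^{1,p(\cdot)}(\Omega)$ are also correct, since $p\in C(\overline\Omega)$ on a bounded Lipschitz domain suffices for the compact variable-exponent Sobolev and trace embeddings when the target exponent is strictly below the critical one.

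However, there is a genuine gap in (vi). You assert that (vi) is ``the special case of (ii) with $r=p$,'' but (ii) with $r=p$ yields compactness of $W^{1,\mathcal{H}}(\Omega)\hookrightarrow L^{p(\cdot)}(\Omega)$, which is \emph{not} the statement $W^{1,\mathcal{H}}(\Omega)\hookrightarrow L^{\mathcal{H}}(\Omega)$. Since $\mathcal{H}(x,t)\ge t^{p(x)}$, the space $L^{\mathcal{H}}(\Omega)$ is in general a strict subspace of $L^{p(\cdot)}(\Omega)$ carrying a strictly stronger topology: convergence of a sequence $u_n\to u$ in $L^{p(\cdot)}(\Omega)$ gives no control on the weighted term $\int_\Omega\mu(x)|u_n-u|^{q(x)}\,\diff x$, which is part of the $L^{\mathcal{H}}$-modular. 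The correct chain is the opposite one: use (ii) with $r=q$, which is legitimate precisely because (H0) imposes $q(x)<p^*(x)$, to get a compact embedding $W^{1,\mathcal{H}}(\Omega)\hookrightarrow L^{q(\cdot)}(\Omega)$, and then compose with the continuous embedding $L^{q(\cdot)}(\Omega)\hookrightarrow L^{\mathcal{H}}(\Omega)$ from your own part (v). This observation also explains why the hypothesis $q<p^*$ (rather than merely $q\le p^*$ or no relation at all) is genuinely needed for (vi), a point your reduction via $r=p$ would obscure.
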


For any $s \in \R$ we denote $s^\pm = \max \{ \pm s, 0 \}$, that means $s = s^+ - s^-$ and $|s| = s^+ + s^-$. For any function $v \colon \Omega \to \R$ we denote $v^\pm (\cdot) = [v(\cdot)]^\pm$. The spaces $\WH$ and $\Wpzero{\mathcal{H}}$ are closed under $\max$ and $\min$, see \cite[Proposition 2.17]{CGHW}.

\begin{proposition}
    Let hypotheses \textnormal{(H0)} be satisfied.
    \begin{enumerate}
        \item[\textnormal{(i)}]
            if $u \in \Wp{\mathcal{H}}$, then $\pm u^\pm \in \Wp{\mathcal{H}}$ with $\nabla (\pm u ^\pm) = \nabla u 1_{\{\pm u > 0\}}$;
        \item[\textnormal{(ii)}]
            if $u_n \to u$ in $\Wp{\mathcal{H}}$, then $\pm u_n^\pm \to \pm u^\pm$ in $\Wp{\mathcal{H}}$;
        \item[\textnormal{(iii)}]
            if $u \in \Wpzero{\mathcal{H}}$, then $\pm u^\pm \in \Wpzero{\mathcal{H}}$.
    \end{enumerate}
\end{proposition}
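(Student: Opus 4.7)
The plan is to treat all three parts by exploiting the fact that $s\mapsto s^{\pm}$ are $1$-Lipschitz maps fixing the origin, and then to transfer the classical truncation arguments from the Sobolev setting to the Musielak--Orlicz setting via the modular $\rho_{\mathcal{H}}$ together with the equivalences between norm and modular convergence stated in the preceding propositions.

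For (i), I would first observe that $|u^{\pm}|\le |u|$ and $|\nabla u \cdot 1_{\{\pm u>0\}}|\le |\nabla u|$, so, once the gradient formula is established, both $u^{\pm}$ and the candidate gradient lie in $L^{\mathcal{H}}(\Omega)$ by monotonicity of $\rho_{\mathcal{H}}$. To verify the formula for the distributional gradient, I would approximate $s\mapsto s^{+}$ by $T_{\varepsilon}\in C^{1}(\R)$ with $\|T_{\varepsilon}'\|_{\infty}\le 1$, $T_{\varepsilon}\to (\cdot)^{+}$ and $T_{\varepsilon}'\to 1_{(0,\infty)}$ pointwise a.e., apply the chain rule to $T_{\varepsilon}\circ u \in W^{1,p(\cdot)}(\Omega)$ (which is valid in the variable exponent setting), test against $\varphi \in C_{0}^{\infty}(\Omega)$ and let $\varepsilon \to 0$ using dominated convergence with the $L^{\mathcal{H}}$-dominating function $|\nabla u|$. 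This produces $\nabla u^{+}=1_{\{u>0\}}\nabla u$, and the argument for $u^{-}$ is identical.

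For (ii), $L^{\mathcal{H}}$-convergence $u_{n}^{\pm}\to u^{\pm}$ follows from $|u_{n}^{\pm}-u^{\pm}|\le |u_{n}-u|$ and modular monotonicity combined with the equivalence $\|v_{k}\|_{\mathcal{H}}\to 0 \Leftrightarrow \rho_{\mathcal{H}}(v_{k})\to 0$. For the gradients I would split
\begin{align*}
    \nabla u_{n}^{+}-\nabla u^{+}
    = 1_{\{u_{n}>0\}}\bigl(\nabla u_{n}-\nabla u\bigr) + \bigl(1_{\{u_{n}>0\}}-1_{\{u>0\}}\bigr)\nabla u .
\end{align*}
The first piece has modular dominated by $\rho_{\mathcal{H}}(\nabla u_{n}-\nabla u)\to 0$. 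For the second piece, extracting a subsequence with $u_{n}\to u$ a.e., the indicator difference converges a.e.\ to $0$ off the set $\{u=0,\ \nabla u\neq 0\}$, which is null by Stampacchia's property, while being bounded by $2|\nabla u|\in L^{\mathcal{H}}$; a modular dominated convergence argument then gives convergence to $0$ in $L^{\mathcal{H}}$, and a subsequence-of-subsequence argument upgrades this to convergence of the full sequence. The case $u_{n}^{-}\to u^{-}$ is analogous. For (iii), pick $\varphi_{n}\in C_{0}^{\infty}(\Omega)$ with $\varphi_{n}\to u$ in $\WH$; each $\varphi_{n}^{\pm}$ is Lipschitz with compact support in $\Omega$, hence lies in $\Wpzero{\mathcal{H}}$ via mollification confined to a slightly smaller compact set. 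Applying (ii) gives $\varphi_{n}^{\pm}\to u^{\pm}$ in $\WH$, and closedness of $\Wpzero{\mathcal{H}}$ yields $u^{\pm}\in \Wpzero{\mathcal{H}}$.

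The main obstacle is the gradient convergence in (ii): the $L^{p(\cdot)}$ analogue is classical, but in the Musielak--Orlicz framework one must verify a dominated convergence principle at the modular level (since the $L^{\mathcal{H}}$-norm is only controlled \emph{between} powers $p_{-}$ and $q_{+}$ of the modular), and then convert modular convergence back to norm convergence using the equivalences stated earlier. The $\Wpzero{\mathcal{H}}$-approximation in (iii) also requires the regularity of $p,q,\mu$ encoded in (H0), since density of $C_{0}^{\infty}$ in Musielak--Orlicz Sobolev spaces is not automatic.
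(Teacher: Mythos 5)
Your argument is correct, and you should note that the paper itself does not prove this proposition but states it citing \cite[Proposition 2.17]{CGHW}; the truncation/chain-rule/density route you describe is the standard one for such results. One small imprecision in (i): the dominated convergence as $\varepsilon\to 0$ takes place in $L^{1}$ against the compactly supported test function $\varphi$, with dominating function $|\nabla u|\,|\varphi|\in L^{1}(\Omega)$ (plus the fact that $\nabla u=0$ a.e.\ on $\{u=0\}$, so the failure of $T_{\varepsilon}'$ to converge at $0$ is harmless); the $L^{\mathcal{H}}$-bound $|\nabla u\,1_{\{\pm u>0\}}|\le |\nabla u|$ is then used, as you already say in your opening sentence, only to place the identified weak gradient in $L^{\mathcal{H}}(\Omega)$. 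The genuinely non-classical step is the modular dominated convergence in (ii), which you correctly convert to norm convergence via the equivalence $\rho_{\mathcal{H}}(v_k)\to 0 \Leftrightarrow \|v_k\|_{\mathcal{H}}\to 0$ from the preliminaries, followed by the subsequence-of-subsequences device; this is exactly the right way to handle the fact that in Musielak--Orlicz spaces the norm only controls the modular between the powers $p_-$ and $q_+$.
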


Let $X=\WH$ or $X=\Wpzero{\mathcal{H}}$ and let $A\colon X\to X^*$ be the nonlinear operator defined by
\begin{align}
    \label{defA}
    \langle A(u),v\rangle :=
    \into \l(|\nabla u|^{p(x)-2}\nabla u+\mu(x)|\nabla u|^{q(x)-2}\nabla u \r)\cdot\nabla v \,\diff x
\end{align}
for $u,v\in X$ with $\langle \cdot,\cdot\rangle$ being the duality pairing
between $X$ and its dual space $X^*$. The following proposition summarizes the main properties of $A\colon X\to X^*$, see \cite[Theorem 3.3 and Proposition 3.4]{CGHW}.

\begin{proposition}
    \label{prop1}
    Let hypotheses \textnormal{(H0)} be satisfied. Then, the operator $A$ defined by \eqref{defA} is bounded, continuous, strictly monotone and of type $(\Ss_+)$, that is,
    \begin{align*}
        u_n\weak u \quad \text{in }X \quad\text{and}\quad  \limsup_{n\to\infty}\,\langle Au_n,u_n-u\rangle\le 0,
    \end{align*}
    imply $u_n\to u$ in $X$.
\end{proposition}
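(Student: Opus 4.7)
The proof splits into four essentially independent claims—boundedness, continuity, strict monotonicity, and the $(\Ss_+)$ property—which I would handle in that order using the decomposition $A = A_p + A_q^\mu$, where
\begin{align*}
    \langle A_p u, v\rangle := \into |\nabla u|^{p(x)-2}\nabla u\cdot\nabla v\,\diff x, \qquad \langle A_q^\mu u, v\rangle := \into \mu(x)|\nabla u|^{q(x)-2}\nabla u\cdot\nabla v\,\diff x.
\end{align*}
Each piece is treated on its natural space, $L^{p(\cdot)}(\Omega;\R^N)$ respectively $L^{q(\cdot)}_\mu(\Omega;\R^N)$, and the two are glued together via the continuous embeddings of Proposition \ref{proposition_embeddings}(i),(iv).

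For boundedness, I would fix $u,v\in X$, apply the H\"older-type inequality in $L^{p(\cdot)}(\Omega)$ and in $L^{q(\cdot)}_\mu(\Omega)$ to each piece separately, and translate the resulting modular estimates for $|\nabla u|^{p(x)-1}$ and $\mu(x)^{1/q'(x)}|\nabla u|^{q(x)-1}$ into norm bounds via the modular--norm relations recalled in Section \ref{section_2}. Continuity reduces to continuity of the Nemytskij operator $w\mapsto |w|^{r(x)-2}w$ between $L^{r(\cdot)}(\Omega;\R^N)$ and $L^{r'(\cdot)}(\Omega;\R^N)$, together with its weighted analogue for $r = q(\cdot)$; strong convergence $u_n\to u$ in $X$ delivers the requisite strong convergence of $\nabla u_n$ in both $L^{p(\cdot)}(\Omega;\R^N)$ and $L^{q(\cdot)}_\mu(\Omega;\R^N)$. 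Strict monotonicity follows at once from the classical pointwise inequality $(|a|^{r-2}a - |b|^{r-2}b)\cdot (a-b) > 0$ for every $r>1$ and every $a\ne b$ in $\R^N$, applied pointwise with $r=p(x)$ and $r=q(x)$ and integrated.

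The substantive step is the $(\Ss_+)$ property. Suppose $u_n\weak u$ in $X$ and $\lims\langle Au_n, u_n-u\rangle\le 0$. Monotonicity combined with $\langle Au, u_n-u\rangle\to 0$ forces $\langle Au_n-Au, u_n-u\rangle\to 0$. The integrand
\begin{align*}
    e_n(x) := \bigl(|\nabla u_n|^{p(x)-2}\nabla u_n - |\nabla u|^{p(x)-2}\nabla u\bigr)\cdot\nabla(u_n-u) + \mu(x)\bigl(|\nabla u_n|^{q(x)-2}\nabla u_n - |\nabla u|^{q(x)-2}\nabla u\bigr)\cdot\nabla(u_n-u)
\end{align*}
is pointwise nonnegative and converges to $0$ in $L^1(\Omega)$, so, passing to a subsequence, pointwise a.\,e.\ on $\Omega$. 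Applying Simon-type inequalities separately in the regimes $r\ge 2$ and $1<r<2$ (with constants kept uniform in $x$ by the bounds $1<p_-\le p_+$ and $1<q_-\le q_+$) upgrades this to $\nabla u_n\to \nabla u$ pointwise a.\,e.\ on $\Omega$. Vitali's convergence theorem, fed with the equi-integrability provided by the weak boundedness of $\{u_n\}$ in $\Wp{\mathcal{H}}$ together with the modular--norm relations, then yields $\rho_{\mathcal{H}}(|\nabla u_n|)\to \rho_{\mathcal{H}}(|\nabla u|)$; combining this with $u_n\to u$ in $\Lp{\mathcal{H}}$ from the compact embedding of Proposition \ref{proposition_embeddings}(vi) gives $\hat{\rho}_{\mathcal{H}}(u_n)\to \hat{\rho}_{\mathcal{H}}(u)$. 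The Radon--Riesz property stated immediately before the present proposition then concludes $u_n\to u$ in $X$.

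The principal obstacle is extracting pointwise a.\,e.\ convergence of $\nabla u_n$ from $e_n\to 0$ in $L^1(\Omega)$: the Simon inequality takes different shapes in the subquadratic and superquadratic regimes and must be applied pointwise with the varying exponents $p(x)$ and $q(x)$, while the mixed sign-definite structure of the two pieces requires one to exploit that the $p(\cdot)$-part alone already controls $|\nabla(u_n-u)|$ on the set $\{\mu=0\}$. All remaining pieces are routine consequences of the modular--norm machinery developed in Section \ref{section_2}.
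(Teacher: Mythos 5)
The paper does not prove this proposition; it cites \cite[Theorem 3.3 and Proposition 3.4]{CGHW}, so your proposal should be judged as a self-contained argument, and its overall architecture (decompose $A$ into a $p(\cdot)$-part and a $\mu$-weighted $q(\cdot)$-part; reduce $(\Ss_+)$ to pointwise a.e.\ convergence of $\nabla u_n$ via Simon-type inequalities; conclude via modular convergence and the Radon--Riesz property) is the standard one and is essentially correct. However, there is one step that does not work as stated. You pass from a.e.\ convergence of $\nabla u_n$ to $\rho_{\mathcal{H}}(|\nabla u_n|)\to\rho_{\mathcal{H}}(|\nabla u|)$ by ``Vitali's convergence theorem, fed with the equi-integrability provided by the weak boundedness of $\{u_n\}$.'' But weak boundedness in $\Wp{\mathcal{H}}$ only gives $\sup_n\rho_{\mathcal{H}}(|\nabla u_n|)<\infty$, i.e.\ an $L^1$-bound on the integrand $|\nabla u_n|^{p(x)}+\mu(x)|\nabla u_n|^{q(x)}$; an $L^1$-bound does not imply equi-integrability, so Vitali cannot be invoked. (Indeed, if it could, the $(\Ss_+)$ property would follow from a.e.\ gradient convergence and boundedness alone, without ever using the quantitative information $\langle Au_n-Au,u_n-u\rangle\to 0$.)

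The correct way to obtain the modular convergence is algebraic rather than measure-theoretic. Since each summand in $\langle Au_n-Au,u_n-u\rangle$ is pointwise nonnegative, both
\begin{align*}
    \into\bigl(|\nabla u_n|^{p(x)-2}\nabla u_n-|\nabla u|^{p(x)-2}\nabla u\bigr)\cdot\nabla(u_n-u)\,\diff x
\end{align*}
and its $\mu$-weighted $q(\cdot)$-analogue tend to $0$ separately. Expanding the first integral, $\into|\nabla u_n|^{p(x)}\diff x$ equals the vanishing term plus $\into|\nabla u_n|^{p(x)-2}\nabla u_n\cdot\nabla u\,\diff x+\into|\nabla u|^{p(x)-2}\nabla u\cdot\nabla u_n\,\diff x-\into|\nabla u|^{p(x)}\diff x$; the second integral converges to $\into|\nabla u|^{p(x)}\diff x$ because $|\nabla u_n|^{p(\cdot)-2}\nabla u_n$ is bounded in $L^{p'(\cdot)}(\Omega;\R^N)$ and converges a.e., hence weakly, to $|\nabla u|^{p(\cdot)-2}\nabla u$, and the third converges by $\nabla u_n\weak\nabla u$ in $L^{p(\cdot)}(\Omega;\R^N)$. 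This yields $\rho_{p(\cdot)}(\nabla u_n)\to\rho_{p(\cdot)}(\nabla u)$, and the same computation for the $\mu$-part gives the weighted modular convergence; together with $u_n\to u$ in $\Lp{\mathcal{H}}$ this produces $\hat\rho_{\mathcal{H}}(u_n)\to\hat\rho_{\mathcal{H}}(u)$, after which your appeal to Radon--Riesz is correct. One further caveat worth noting: for $X=\WH$ (as opposed to $\Wpzero{\mathcal{H}}$), the pointwise inequality on gradients only gives $\langle Au-Av,u-v\rangle>0$ when $\nabla u\ne\nabla v$ on a set of positive measure; if $u-v$ is a nonzero constant your argument produces equality, so strict monotonicity of $A$ as defined in \eqref{defA} actually fails on $\WH$, and the claim should be read on $\Wpzero{\mathcal{H}}$ or for an operator augmented by the lower-order term $|u|^{p(x)-2}u+\mu(x)|u|^{q(x)-2}u$.
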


In what follows, to shorten notation, we write $\|\cdot\|=\|\cdot\|_{\hat{\rho}_\mathcal{H}}$ for the norm in $\WH$ and $\|\cdot\|_0=\|\cdot\|_{1,\mathcal{H},0}$ for the norm in $W^{1,\mathcal{H}}_0(\Omega)$. The corresponding dual spaces are denoted by $\WH^*$ and $W^{1,\mathcal{H}}_0(\Omega)^*$, respectively.  Given a Banach space $X$ and its dual space $X^*$ we denote
\begin{align*}
    \K(X^*) = \l\{ P\subset X^* \,:\, P \neq \emptyset, \, P \text{ is closed and convex}\r\}.
\end{align*}

Let $X$ be a real Banach space with its dual space $X^*$. A function $J\colon X\to \R$ is said to be locally Lipschitz at $u\in X$ if there
exist a neighborhood $N(u)$ of $u$ and a constant $L_u>0$ such that
\begin{align*}
    |J(w)-J(v)|\leq L_u\|w-v\|_X \quad \text{for all } w, v\in N(u).
\end{align*}

\begin{definition}
    Let $J \colon X \to \R$ be a locally Lipschitz function and
    let $u,v\in X$. The generalized directional derivative $J^\circ(u; v)$ of $J$ at the point $u$ in the direction $v$ is defined by
    \begin{align*}
        J^\circ(u;v): = \limsup \limits_{w\to u,\, t\downarrow 0 } \frac{J(w+t v)-J(w)}{t}.
    \end{align*}
    The generalized gradient $\partial J\colon X\to 2^{X^*}$ of $J
        \colon X \to \R$ is defined by
    \begin{align*}
        \partial J(u): =\left \{\, \xi\in X^{*} \,:\, J^\circ (u; v)\geq \langle\xi, v\rangle_{X^*\times X} \ \text{ for all } v \in    X  \right \} \quad \text{for all } u\in X.
    \end{align*}
\end{definition}

The next proposition collects some basic results, see \cite{Cla90} or \cite{Papageorgiou-Winkert-2018}.

\begin{proposition}
    \label{P1}
    Let $J\colon X \to \R$ be locally Lipschitz with Lipschitz constant
    $L_{u}>0$ at $u\in X$. Then we have the following:
    \begin{enumerate}
        \item[{\rm(i)}]
            The function $v\mapsto J^\circ(u;v)$ is positively    homogeneous, subadditive, and satisfies
            \begin{align*}
                |J^\circ(u;v)|\leq L_{u}\|v\|_X \quad \text{for all }v\in X.
            \end{align*}
        \item[{\rm(ii)}]
            The function $(u,v)\mapsto J^\circ(u;v)$ is upper semicontinuous.
        \item[{\rm(iii)}]
            For each $u\in X$, $\partial J(u)$ is a nonempty, convex,
            and weak$^*$ compact subset of $X^*$ with $ \|\xi\|_{X^{*}}\leq L_{u}$ for all $\xi\in\partial J(u)$.
        \item[{\rm(iv)}]
            $J^\circ(u;v) = \max\left\{\langle\xi,v\rangle_{X^*\times X} \mid \xi\in\partial J(u)\right\}$ for all $v\in X$.
        \item[{\rm(v)}]
            The multi-valued function $X\ni u\mapsto \partial J(u)\subset X^*$ is upper semicontinuous from $X$ into w$^*$-$X^*$.
    \end{enumerate}
\end{proposition}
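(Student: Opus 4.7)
The plan is to prove this standard Clarke-type proposition by first establishing (i) directly from the limsup definition, then upgrading to the upper semicontinuity in (ii) by an approximation argument, after which (iii) and (iv) become Hahn--Banach applications to the sublinear functional $p(v) := J^\circ(u;v)$, and finally (v) will follow by combining (ii) with Banach--Alaoglu.

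For (i), positive homogeneity in $v$ follows by the substitution $s = \lambda t$ in the limsup defining $J^\circ(u; \lambda v)$. For subadditivity, splitting
\[
\frac{J(w + t(v_1+v_2)) - J(w)}{t} = \frac{J(w + tv_1 + tv_2) - J(w+tv_1)}{t} + \frac{J(w+tv_1) - J(w)}{t}
\]
and noting that $w + tv_1 \to u$ as $(w,t) \to (u,0^+)$, the limsup of the first quotient is at most $J^\circ(u;v_2)$ and of the second at most $J^\circ(u;v_1)$. The bound $|J^\circ(u;v)| \leq L_u \|v\|_X$ is immediate from the local Lipschitz estimate applied to both $J(w+tv)$ and $J(w)$ inside the neighborhood $N(u)$. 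For (ii), given $(u_n,v_n) \to (u,v)$, pick $(w_n,t_n)$ with $\|w_n - u_n\|_X < 1/n$, $0 < t_n < 1/n$ realizing $J^\circ(u_n; v_n)$ up to an error $1/n$; splitting the difference quotient through the intermediate $w_n + t_n v$, the correction $(J(w_n + t_n v_n) - J(w_n + t_n v))/t_n$ is bounded by $L_u \|v_n - v\|_X \to 0$ (eventually $w_n$ lies in $N(u)$), while the limsup of $(J(w_n+t_n v) - J(w_n))/t_n$ is at most $J^\circ(u;v)$ since $(w_n,t_n) \to (u,0^+)$.

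Parts (iii) and (iv) follow from Hahn--Banach applied to the sublinear functional $p(\cdot) := J^\circ(u;\cdot)$, which by (i) is positively homogeneous, subadditive and dominated by $L_u \|\cdot\|_X$. Any $\xi \in X^*$ with $\langle \xi, v\rangle \leq p(v)$ for all $v$ automatically satisfies $\|\xi\|_{X^*} \leq L_u$, so such $\xi$ belong to $\partial J(u)$; extending the zero functional on $\{0\}$ yields nonemptiness. Convexity of $\partial J(u)$ is clear from the definition; weak${}^*$-closedness combined with the norm bound and Banach--Alaoglu gives weak${}^*$-compactness. For the max-formula, fix $v_0$, extend the functional $t v_0 \mapsto t J^\circ(u;v_0)$ from $\mathrm{span}\{v_0\}$ to $X$ dominated by $p$ (using $J^\circ(u;v_0) + J^\circ(u;-v_0) \geq J^\circ(u;0) = 0$ to verify domination on $\mathrm{span}\{v_0\}$), and the extension lies in $\partial J(u)$ and achieves the value $J^\circ(u;v_0)$.

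Finally, for (v), suppose $u_n \to u$ and $\xi_n \in \partial J(u_n)$. Since the Lipschitz constant can be chosen uniform on a neighborhood of $u$, the sequence $\{\xi_n\}$ is eventually bounded in $X^*$, hence by Banach--Alaoglu (with the separability one would obtain along the argument, or via nets in the general case) has a weak${}^*$-cluster point $\xi$. For every $v \in X$ we have $\langle \xi_n, v\rangle \leq J^\circ(u_n; v)$; passing to a suitable subnet and invoking the upper semicontinuity from (ii) yields $\langle \xi, v\rangle \leq J^\circ(u;v)$, so $\xi \in \partial J(u)$, which is the required closed-graph property. The main obstacle is (ii): one must carefully manage the two sources of variation (base point and direction) while keeping the difference quotient inside the Lipschitz neighborhood, and essentially all of (iii)--(v) cascade from this point.
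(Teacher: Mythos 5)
The paper gives no proof of this proposition at all; it is quoted as a collection of classical facts about Clarke's generalized directional derivative and generalized gradient, with a pointer to Clarke's book and to Papageorgiou--Winkert. Your argument is correct and is precisely the standard textbook proof found in those references: positive homogeneity by rescaling $t$, subadditivity by the intermediate-point split through $w+tv_1$, the norm bound from the local Lipschitz estimate, upper semicontinuity of $(u,v)\mapsto J^\circ(u;v)$ by a diagonal choice of $(w_n,t_n)$ and a perturbation estimate, Hahn--Banach applied to the sublinear functional $p=J^\circ(u;\cdot)$ for nonemptiness and the max formula, Banach--Alaoglu for weak$^*$ compactness, and closed graph plus local boundedness (equi-Lipschitz near $u$) for upper semicontinuity of $\partial J$. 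So there is nothing to compare against in the paper itself; your proposal fills in exactly the argument the citation replaces.
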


Assume $p_1, p_2\in C(\close)$ and $(p_{j})_-  \geq 1$, $(j=1,2)$.  Let $F$ be a function from $\Omega\times \R$ into $2^\R$.  For each measurable $u \colon \Omega\to \R$, we consider the function
$F(u) \colon \Omega \to 2^\R$, $F(u)(x) = F(x,u(x))$ and denote
$\tilde{F}(u) = \{ v \in M(\Omega) \,:\, v(x) \in F(x,u(x))$ for a.\,a. $x\in \Omega\}$. The following theorem can be found in \cite[Theorem 7.3]{carl:mvi21}.

\begin{theorem}
    \label{t*31}
    Assume $F\colon\Omega\times\R\to 2^{\R}$ satisfies the following conditions:
    \begin{enumerate}
        \item[\textnormal{(i)}]
            For a.\,a.\,$x\in \Omega$ and for all $u\in \R$, $F(x,u)$ is closed and nonempty;
        \item[\textnormal{(ii)}]
            $F$ is graph measurable;
        \item[\textnormal{(iii)}]
            For a.\,a.\,$x\in \Omega$, the function $u\mapsto F(x,u)$ is Hausdorff-upper semicontinuous (h-u.s.c.\,for short);
        \item[\textnormal{(iv)}]
            There exist $a\in L^{p_2(\cdot)}(\Omega)$ and $b >0$ such that
            \begin{align*}
                |v| \leq a(x) + b|u|^{\frac{p_1(x)}{p_2(x)}}
            \end{align*}
            for a.\,a.\,$x\in \Omega$ and for all $v\in F(x,u)$.
    \end{enumerate}
    Thus, for each $u\in L^{p_1(\cdot)}(\Om)$, $\tilde{F}(u)$ is a (nonempty) closed subset of $L^{p_2(\cdot)}(\Om)$ and the mapping $\tilde{F}\colon u\mapsto \tilde{F}(u)$ is h-u.s.c.\,from $L^{p_1(\cdot)}(\Om)$ to $2^{L^{p_2(\cdot)}(\Om)}$.
\end{theorem}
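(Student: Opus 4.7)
\medskip

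\textbf{Proof plan.} The plan is to break the statement into two independent parts: (a) for each $u\in L^{p_1(\cdot)}(\Omega)$, $\tilde F(u)$ is a nonempty closed subset of $L^{p_2(\cdot)}(\Omega)$; (b) $\tilde F$ is Hausdorff upper semicontinuous. For (a), I would fix $u\in L^{p_1(\cdot)}(\Omega)$ and observe that the multifunction $G\colon \Omega \to 2^{\R}$, $G(x):=F(x,u(x))$, has closed nonempty values by (i), and is measurable: indeed, its graph equals the preimage of $\graph F$ under the measurable map $(x,t)\mapsto(x,u(x),t)$, so graph measurability of $F$ plus measurability of $u$ give graph measurability of $G$, which, since the target is $\R$, is equivalent to measurability. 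The Kuratowski--Ryll-Nardzewski selection theorem then produces a measurable $v$ with $v(x)\in F(x,u(x))$ a.e. The growth bound (iv) combined with the elementary inequality $(a+b)^r\le 2^{r-1}(a^r+b^r)$ gives
\begin{align*}
\int_\Omega |v(x)|^{p_2(x)}\,\diff x
\le 2^{(p_2)_+-1}\!\left(\int_\Omega a(x)^{p_2(x)}\,\diff x + b^{(p_2)_+}\int_\Omega |u(x)|^{p_1(x)}\,\diff x\right)<\infty,
\end{align*}
so $v\in L^{p_2(\cdot)}(\Omega)$ and $\tilde F(u)\ne \emptyset$; the same estimate applied to an arbitrary $v\in\tilde F(u)$ shows $\tilde F(u)\subset L^{p_2(\cdot)}(\Omega)$. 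Closedness in $L^{p_2(\cdot)}(\Omega)$ follows from the standard argument: a norm-convergent sequence in $\tilde F(u)$ admits an a.e.\ convergent subsequence, and closedness of each fibre $F(x,u(x))$ forces the limit to lie in $\tilde F(u)$.

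For (b), I would reduce h-u.s.c.\ to the sequential criterion: given $u_n\to u$ in $L^{p_1(\cdot)}(\Omega)$ it suffices to prove
\begin{align*}
\sup_{v\in \tilde F(u_n)}\dist_{L^{p_2(\cdot)}(\Omega)}\bigl(v,\tilde F(u)\bigr)\longrightarrow 0.
\end{align*}
Arguing by contradiction via the double-subsequence principle, I would pass to a subsequence along which $u_n\to u$ almost everywhere and $|u_n(x)|\le g(x)$ a.e.\ for some $g\in L^{p_1(\cdot)}(\Omega)$ (the modular version of Vitali's lemma on variable-exponent spaces provides such a dominating $g$ up to a further subsequence), and pick $v_n\in\tilde F(u_n)$ with $\dist_{L^{p_2(\cdot)}}(v_n,\tilde F(u))\ge \varepsilon_0>0$ for all $n$.

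The central constructive step is to build, for each $n$, a nearest-point-type selector $w_n\in\tilde F(u)$. Setting $d_n(x):=\dist(v_n(x),F(x,u(x)))$, which is measurable, I would consider
\begin{align*}
\Gamma_n(x):=\left\{w\in F(x,u(x))\,:\,|w-v_n(x)|\le d_n(x)+\tfrac{1}{n}\right\},
\end{align*}
which has closed nonempty values and is measurable (the graph is cut out by a countable intersection of measurable conditions), so Kuratowski--Ryll-Nardzewski yields a measurable selector $w_n(x)\in\Gamma_n(x)$, i.e.\ $w_n\in \tilde F(u)$. Hausdorff upper semicontinuity of $F(x,\cdot)$ from (iii) and $u_n(x)\to u(x)$ a.e.\ give $d_n(x)\to 0$ a.e., hence $|v_n(x)-w_n(x)|\to 0$ a.e.

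Finally I would close the loop by a dominated convergence argument at the modular level. The growth hypothesis gives
\begin{align*}
|v_n(x)-w_n(x)|^{p_2(x)} \le 2^{(p_2)_+-1}\!\left(|v_n(x)|^{p_2(x)}+|w_n(x)|^{p_2(x)}\right)
\le C\!\left(a(x)^{p_2(x)}+g(x)^{p_1(x)}+|u(x)|^{p_1(x)}\right),
\end{align*}
an $L^1(\Omega)$ dominant independent of $n$, so classical dominated convergence forces $\rho_{p_2(\cdot)}(v_n-w_n)\to 0$ and therefore $\|v_n-w_n\|_{p_2(\cdot)}\to 0$, contradicting $\dist_{L^{p_2(\cdot)}}(v_n,\tilde F(u))\ge \varepsilon_0$. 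I expect the main obstacle to be the measurable selection construction of $w_n$ that simultaneously stays in $\tilde F(u)$ and tracks $v_n$ pointwise well enough for the variable-exponent dominated convergence to apply; once the enlargement by $1/n$ and the growth-based dominant are in place, this is routine, but the interplay between a.e.\ h-u.s.c.\ of the fibres and the modular topology of $L^{p_2(\cdot)}(\Omega)$ is the essential point.
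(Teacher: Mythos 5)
The paper does not prove Theorem \ref{t*31}; it simply cites it from the monograph \cite[Theorem 7.3]{carl:mvi21}, so there is no in-paper argument to compare against. On its own merits, your proof is correct and follows what is essentially the standard route used in that reference for Nemytskij operators of Hausdorff--u.s.c.\ multifunctions. The overall architecture is sound: (a) graph measurability of $x\mapsto F(x,u(x))$ via the pullback of $\graph F$ under $(x,t)\mapsto(x,u(x),t)$, a Kuratowski--Ryll-Nardzewski selection, and the $p_2(\cdot)$-growth estimate to get nonemptiness and the inclusion $\tilde F(u)\subset L^{p_2(\cdot)}(\Omega)$; (b) reduction to a.e.\ convergence with an $L^{p_1(\cdot)}$-dominant along a subsequence, construction of an approximate nearest-point measurable selector $w_n\in\tilde F(u)$ with $|v_n(x)-w_n(x)|\le d_n(x)+1/n$, pointwise vanishing of $d_n$ from condition (iii), and modular dominated convergence to conclude $\|v_n-w_n\|_{p_2(\cdot)}\to 0$, contradicting the assumed $\varepsilon_0$-separation.

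A few small points worth tightening, none of which is a genuine gap. First, the constant $b^{(p_2)_+}$ in your modular estimate should be $\max\{1,b\}^{(p_2)_+}$ (or $\max\{b^{(p_2)_-},b^{(p_2)_+}\}$) since $b$ may be less than $1$; this is purely cosmetic. Second, for the measurability of $\Gamma_n$ you should observe explicitly that $\graph\Gamma_n=\graph G\cap\{(x,w):|w-v_n(x)|\le d_n(x)+1/n\}$ is measurable because $v_n$ and $d_n$ are measurable, and that $\Gamma_n$ has nonempty compact values (boundedness of $F(x,u(x))$ follows from (iv), so the intersection with a closed ball is compact); graph measurability plus closed values in $\R$ on a complete measure space then gives the measurability needed for Kuratowski--Ryll-Nardzewski. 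Third, when extracting $v_n$ with $\dist(v_n,\tilde F(u))\ge\varepsilon_0$ from the failure of $\sup_{v\in\tilde F(u_n)}\dist(v,\tilde F(u))\to 0$, one technically has $\sup\ge 2\varepsilon_0$ along a subsequence and then picks $v_n$ at distance $\ge\varepsilon_0$; you already suggest this. With those minor repairs spelled out, the argument is complete and robust, and the "modular dominated convergence" step together with Proposition 2.2(v) in the paper (equivalence of modular and norm convergence to zero) is exactly the right bridge between pointwise and $L^{p_2(\cdot)}$-topology.
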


\begin{remark}
    We have an analogous result to Theorem \ref{t*31}, where $\Omega$ is replaced by $\Gamma$.  In fact, a straightforward generalization of Theorem \ref{t*31} holds true with $\Om$ being a measure space on which Lebesgue and Sobolev spaces with variable exponents are defined.
\end{remark}

The following theorem was proved in \cite[Theorem 2.2]{le:ret11}. We use the notation $B_R(0):=\{u\in X \ :  \ \|u\|_X<R\}$.

\begin{theorem}
    \label{surjectivitytheorem}
    Let $X$ be a real reflexive Banach space,  let $F\colon D(F)\subset X\to 2^{X^*}$ be a maximal monotone operator, let $G\colon D(G)=X\to 2^{X^*}$ be a bounded multi-valued pseudomonotone operator and let $L\in X^*$.
    Assume that there exist $u_0\in X$ and $R\ge \|u_0\|_X$ such that $D(F)\cap B_R(0)\neq\emptyset$ and
    \begin{align*}
        \langle\xi+\eta-L,u-u_0\rangle_{X^*\times X}>0
    \end{align*}
    for all $u\in D(F)$ with $\|u\|_X=R$, for all $\xi\in F(u)$ and for all $\eta\in G(u)$. Then the inclusion
    \begin{align*}
        F(u)+G(u)\ni L
    \end{align*}
    has a solution in $D(F)$.
\end{theorem}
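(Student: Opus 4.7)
The plan is to reduce the inclusion to a surjectivity statement for a purely pseudomonotone operator by Yosida--regularizing $F$, solving the regularized problem on the closed ball $\overline{B_R(0)}$, and then passing to the limit $\lambda\to 0^+$. Let $J\colon X\to X^*$ denote the normalized duality mapping of $X$; after an equivalent renorming, $J$ may be assumed single-valued and strictly monotone. For each $\lambda>0$, let $F_\lambda\colon X\to X^*$ be the Yosida approximation of $F$ associated with $J$. Then $F_\lambda$ is single-valued, bounded on bounded sets, demicontinuous, and monotone (hence pseudomonotone), it satisfies $F_\lambda u\in F(J_\lambda u)$ for the resolvent $J_\lambda$, and $\lim_{\lambda\to 0^+} J_\lambda u=u$ for every $u\in \overline{D(F)}$. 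In particular, $T_\lambda:=F_\lambda+G-L$ is a bounded multi-valued pseudomonotone operator from $X$ to $X^*$.

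Next I would transfer the coercivity to the regularized operator. Using the identity $F_\lambda u\in F(J_\lambda u)$ and the monotonicity of $F$, I would prove that for every $\lambda>0$, every $u\in X$ with $\|u\|_X=R$, and every $\eta\in G(u)$,
\begin{align*}
    \langle F_\lambda u+\eta-L,\,u-u_0\rangle >0,
\end{align*}
by comparing $u$ to the element $J_\lambda u\in D(F)$ (to which the hypothesized inequality applies) and absorbing the residual terms, which involve pairings with the duality mapping that are nonnegative by monotonicity. With this boundary sign condition in hand, I would apply the classical surjectivity theorem for bounded multi-valued pseudomonotone operators on a closed ball (Browder--Hess / Zeidler type): the outward-pointing condition on $\partial B_R(0)$ yields some $u_\lambda\in\overline{B_R(0)}$ and some $\eta_\lambda\in G(u_\lambda)$ with $F_\lambda u_\lambda+\eta_\lambda=L$.

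Finally, I would pass to the limit $\lambda\to 0^+$. Boundedness of $\{u_\lambda\}$, $\{\eta_\lambda\}$ (via boundedness of $G$), and therefore of $\{F_\lambda u_\lambda\}$, allows me to extract subsequences with $u_\lambda\rightharpoonup u$ in $X$, $\eta_\lambda\rightharpoonup \eta$ in $X^*$, and $F_\lambda u_\lambda\rightharpoonup \xi:=L-\eta$. The standard demiclosedness lemma for Yosida approximations, together with $u_\lambda-J_\lambda u_\lambda\to 0$ strongly and $\limsup\langle F_\lambda u_\lambda,\,J_\lambda u_\lambda-u\rangle\leq 0$, forces $u\in D(F)$ and $\xi\in F(u)$. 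Pseudomonotonicity of $G$ applied to $\{(u_\lambda,\eta_\lambda)\}$, combined with the estimate $\limsup\langle \eta_\lambda,\,u_\lambda-u\rangle\leq 0$ (which follows from $\eta_\lambda=L-F_\lambda u_\lambda$ once $\xi\in F(u)$ is established), then gives $\eta\in G(u)$, so that $L=\xi+\eta\in F(u)+G(u)$.

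The most delicate step is the intertwined use of Yosida approximation and pseudomonotonicity in Steps 2 and 4: the strict boundary coercivity is furnished only on the graph intersection $D(F)\cap\partial B_R(0)$, yet must be inherited by the regularized operator on the full sphere, and, dually, when closing the limit one must simultaneously identify the weak limit of $F_\lambda u_\lambda$ as an element of $F(u)$ and the weak limit of $\eta_\lambda$ as an element of $G(u)$, without losing the $\limsup$ inequality that feeds pseudomonotonicity of $G$. Handling the interaction between the two operators at the limit --- rather than each one in isolation --- is the main technical obstacle.
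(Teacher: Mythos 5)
First, note that the paper does not prove this statement: Theorem \ref{surjectivitytheorem} is imported verbatim from the reference \cite{le:ret11}, so there is no internal proof to compare against; your attempt has to be judged on its own. Judged that way, it has a genuine gap at the step you yourself flag as delicate, namely the transfer of the boundary sign condition to the Yosida regularization. The hypothesis gives $\langle \xi+\eta-L,u-u_0\rangle>0$ only for $u\in D(F)$ with $\|u\|_X=R$, with $\xi$ and $\eta$ taken from $F$ and $G$ \emph{at the same point} $u$. You need $\langle F_\lambda u+\eta-L,u-u_0\rangle>0$ for \emph{every} $u$ on the sphere and every $\eta\in G(u)$, and you propose to get it by "comparing $u$ to $J_\lambda u\in D(F)$, to which the hypothesized inequality applies." But the hypothesized inequality does not apply to $J_\lambda u$: in general $\|J_\lambda u\|_X\neq R$, so $J_\lambda u$ is not on the sphere, and even if it were, the hypothesis would pair $F(J_\lambda u)$ with $G(J_\lambda u)$ and the test direction $J_\lambda u-u_0$, not with your $\eta\in G(u)$ and $u-u_0$. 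Monotonicity of $F$ only yields $\|F_\lambda u\|\le\inf\{\|\xi\|:\xi\in F(u)\}$ for $u\in D(F)$; it does not control the sign of $\langle F_\lambda u+\eta-L,u-u_0\rangle$. Already in one dimension one can build a maximal monotone $F$ (a bounded increasing function with a jump at $\pm R$) and a constant $G$ for which the original condition holds on $D(F)\cap\partial B_R(0)$ while $F_\lambda u+\eta$ has the wrong sign at $\|u\|=R$ for $\lambda$ not small; for points of the sphere outside $\overline{D(F)}$, or when $\|u_0\|_X=R$ (which the hypothesis permits), the regularized inequality can fail for all $\lambda$. So the Browder--Hess ball theorem cannot be invoked for $F_\lambda+G-L$ as written, and the rest of the argument has nothing to stand on. A secondary, repairable weakness is the circularity in the limit passage: you derive $\limsup\langle\eta_\lambda,u_\lambda-u\rangle\le 0$ "once $\xi\in F(u)$ is established," but establishing $\xi\in F(u)$ itself requires a $\limsup$ inequality; the standard way out is the splitting argument showing that neither $\liminf\langle F_\lambda u_\lambda,u_\lambda-u\rangle$ nor $\liminf\langle\eta_\lambda,u_\lambda-u\rangle$ can be negative, which you do not carry out.

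A telltale sign that the route is off is that your proof never uses the hypothesis $D(F)\cap B_R(0)\neq\emptyset$. That hypothesis is exactly what the intended argument needs: one perturbs $F$ by $\partial I_{\overline{B_R(0)}}$ rather than by Yosida approximation. Since $D(\partial I_{\overline{B_R(0)}})=\overline{B_R(0)}$ has nonempty interior meeting $D(F)$, Rockafellar's sum theorem makes $F+\partial I_{\overline{B_R(0)}}$ maximal monotone with bounded effective domain; the surjectivity theorem for bounded pseudomonotone perturbations of maximal monotone operators then applies without any coercivity issue and produces $u\in D(F)\cap\overline{B_R(0)}$ with $L\in F(u)+\partial I_{\overline{B_R(0)}}(u)+G(u)$. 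Finally, if $\|u\|_X=R$ one writes $L=\xi+\nu+\eta$ with $\nu$ in the normal cone, uses $R\ge\|u_0\|_X$ to get $\langle\nu,u-u_0\rangle\ge 0$, and contradicts the strict boundary inequality; hence $\|u\|_X<R$, $\nu=0$, and $u$ solves the original inclusion. If you want to keep a regularization scheme, it must be applied after the ball constraint has been built into the maximal monotone part, not to $F$ alone.
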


An important tool in extending our results to discontinuous Nemytskij operators is  the next fixed point result, see \cite[Proposition 2.39]{Carl-Heikkilae-2011} or \cite[Theorem 1.1.1]{Carl-Heikkilae-2000}.

\begin{theorem}
    \label{theorem-fixed-point}
    Let $P$ be a subset of an ordered normed space $X$, and let $G\colon P\to P$ be an increasing mapping, that is, $x,y\in P$ with $x\leq y$ implies $Gx\leq Gy$. Then the following holds true:
    \begin{enumerate}
        \item[\textnormal{(i)}]
            If the image $G(P)$ has a lower bound in $P$ and increasing sequences of $G(P)$ converge weakly in $P$, then $G$ has the smallest fixed point $x_*$ given by
            $x_*=\min\{x\,:\,Gx \leq x\}$.
        \item[\textnormal{(ii)}]
            If the image $G(P)$ has an upper bound in P and decreasing sequences of $G(P)$ converge weakly in $P$, then $G$ has the greatest fixed point $x^*$ given by $x^*=\max\{x\,:\,x\leq Gx\}$.
    \end{enumerate}
\end{theorem}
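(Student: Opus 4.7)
The plan is to prove (i); part (ii) follows by the evident duality (replace \emph{lower bound} by \emph{upper bound}, \emph{increasing} by \emph{decreasing}, and reverse all order relations). The strategy is the classical chain-iteration argument of Bourbaki-Kneser-Amann type, adapted to the present hypothesis of weak sequential convergence of monotone sequences.

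First, I would construct a candidate fixed point by iteration. Let $a\in P$ be a lower bound of $G(P)$, so that $a\le Gy$ for every $y\in P$, and in particular $a\le Ga$. Setting $x_0:=a$ and $x_{n+1}:=Gx_n$, monotonicity of $G$ yields an increasing sequence which from index $n\ge 1$ lies in $G(P)$; by hypothesis it converges weakly to some $\bar x\in P$. Since in the ordered function spaces of interest the positive cone is weakly closed, $x_n\le \bar x$ for every $n$, and applying $G$ then passing to the weak limit gives $\bar x\le G\bar x$. Iterating this construction starting from $\bar x$, and continuing through countable limit ordinals whenever the resulting weak limit fails to be a fixed point, produces an increasing well-ordered chain of pre-fixed points. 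A Zorn-type argument on the family of countable well-ordered chains that start at $a$, are stable under $G$ at successor stages, and are closed under weak limits of monotone sequences at limit stages yields a maximal such chain, whose largest element $x_*$ must satisfy $Gx_*=x_*$ (otherwise it could be extended by one further successor step, contradicting maximality).

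To identify $x_*$ with $\min\{x\in P:Gx\le x\}$, let $y\in P$ be any post-fixed point, so $Gy\le y$. Then $a\le Gy\le y$, and induction along the chain -- using monotonicity of $G$ at successor stages and weak closedness of the positive cone at limit stages -- shows every element of the chain is dominated by $y$; in particular $x_*\le y$. Since $x_*$ is itself a fixed point and hence a post-fixed point, it is the smallest post-fixed point, which proves the stated characterization.

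The main obstacle is the handling of the limit ordinals: only \emph{sequential} weak convergence is hypothesized, so monotone chains of uncountable order type are not directly accessible. This is precisely why the Zorn argument must be restricted to countable well-ordered chains, within which limits of monotone sequences suffice; the full proof in Carl-Heikkil\"a accomplishes this through a generalized iteration construction that stays within countable chains throughout, avoiding any transfinite recursion beyond the first uncountable ordinal.
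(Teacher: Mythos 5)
The paper does not prove this statement; it quotes it from Carl--Heikkil\"{a} (Proposition 2.39 in \cite{Carl-Heikkilae-2011}, or Theorem 1.1.1 in \cite{Carl-Heikkilae-2000}), so there is no internal proof in the paper against which to compare your argument.

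Your attempt contains a genuine gap at the Zorn step. You propose to apply Zorn's lemma to the family of \emph{countable} well-ordered chains that start at the lower bound $a$, are generated by $G$ at successor stages, and are closed under weak limits of monotone sequences at limit stages. Zorn's lemma requires that every chain of such objects (ordered by end-extension) admit an upper bound \emph{in the same family}. But a nested chain of countable well-ordered chains may itself be uncountable; its union is then a well-ordered chain of uncountable order type, and this union lies outside your family. So Zorn does not apply as stated, and the restriction to countable chains is exactly what breaks the upper-bound condition. You acknowledge this tension in your closing paragraph, but you do not resolve it: asserting that the Carl--Heikkil\"{a} proof ``stays within countable chains throughout'' is a citation of the missing fact, not a proof of it. The mechanism in the cited references is not a Zorn argument at all. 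It combines an order-theoretic recursion (chain-generating) principle, which produces one canonical well-ordered $G$-chain starting at $a$, with a separate lemma asserting that, under the hypothesis that increasing sequences of $G(P)$ converge weakly in $P$, every well-ordered chain arising in the construction has a supremum realized as the weak limit of a cofinal increasing sequence, which forces the generated chain to terminate at a maximum. That lemma is precisely the nontrivial content that a naive transfinite or Zorn argument cannot supply, and it is absent from your sketch. A secondary point: the step $x_n \le \bar x$ uses weak closedness of the order cone, which follows from norm-closedness of the cone via Mazur's theorem; this hypothesis is part of Carl--Heikkil\"{a}'s standing framework but is not stated in the theorem as reproduced here, and should be made explicit rather than attributed vaguely to ``the ordered function spaces of interest.''
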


\section{Coercive Case: Proof of Theorem \ref{T101}}\label{section_3}

In this section, we are going to prove Theorem \ref{T101}. First, recall that the embedding $i_{r_1(\cdot)} \colon \WH \to  L^{r_1(\cdot)}(\Omega)$, $u\mapsto u$, and the trace operator
$i_{r_2(\cdot)} \colon \WH \to  L^{r_2(\cdot)}(\Gamma)$, $u\mapsto u|_{\Gamma}$, are compact due to (F2) and Proposition \ref{proposition_embeddings}\textnormal{(ii)}, \textnormal{(iii)}.
Let $i_{r_1(\cdot)}^* \colon L^{r_1'(\cdot)}(\Omega) \to \WH^*$, and $i_{r_2(\cdot)}^* \colon L^{r_2'(\cdot)}(\Gamma) \to \WH^*$  be their adjoints.  As a consequence of (F1), for any $u\in M(\Om)$, the set of measurable selections of $f(\cdot , u)$,
\begin{align*}
    \tilde{f}(u) = \l\{ \eta \in M(\Omega) \,:\, \eta(x)\in f(x,u(x)) \text{ for a.\,a.\,} x\in \Omega\r\},
\end{align*}
is nonempty. Similarly, for any $u\in M(\Gamma)$, the set of measurable selections of $f_{\Gamma}(\cdot , u)$,
\begin{align*}
    \tilde{f}_\Gamma(u) = \l\{ \eta \in M(\Gamma) \,:\, \eta(x)\in f_\Gamma (x,u(x)) \text{ for a.\,a.\,} x\in \Gamma\r\},
\end{align*}
is also nonempty.

Moreover, from  (F2), $\tilde{f}(u)\subset L^{r_1'(\cdot)}(\Om)$ if $u\in L^{r_1(\cdot)}(\Omega)$ and
$\tilde{f}_\Gamma(u)\subset L^{r_2'(\cdot)}(\Gamma)$ if $u\in L^{r_2(\cdot)}(\Gamma)$.
Let us consider the mappings $\tilde{f} \colon L^{r_1(\cdot)}(\Omega) \to L^{r_1'(\cdot)}(\Omega)$, $u\mapsto \tilde{f}(u)$ and $\F = i_{r_1(\cdot)}^* \tilde{f} i_{r_1(\cdot)} \colon \WH \to 2^{\WH^*}$, that is, $\F(u) = \{ \hat{\eta}\in \WH^* \,:\, \eta\in\tilde{f}(u)\}$, where $\hat{\eta}\in \WH^*$ is defined for each $\eta\in L^{r_1'(\cdot)}(\Om)$ by
\begin{align*}
    \la \hat{\eta} , v\ra = \int_\Om\eta v \,\diff x \quad\text{for all } v\in \WH.
\end{align*}
Similarly, we define
\begin{align*}
    &\tilde{f_\Gamma } \colon L^{r_2(\cdot)}(\Gamma ) \to L^{r_2'(\cdot)}(\Gamma ), \quad u\mapsto \tilde{f_\Gamma }(u),\\
    &{\F}_\Gamma  = i_{r_2(\cdot)}^* \tilde{f_\Gamma } i_{r_2(\cdot)} \colon \WH \to 2^{\WH^*}
\end{align*}
with ${\F}_\Gamma (u) = \{ \hat{\eta}\in \WH^* \,:\, \eta\in\tilde{f}_\Gamma (u)\}$, where $\hat{\eta}\in \WH^*$ is defined for each $\eta\in L^{r_2'(\cdot)}(\Gamma )$ by
\begin{align*}
    \la \hat{\eta} , v\ra = \int_\Gamma \eta v \,\diff \sigma \quad\text{for all } v\in \WH.
\end{align*}

We have the following crucial property of $\F$ and ${\F}_\Gamma$.

\begin{proposition}
    \label{p35}
    Let hypotheses \textnormal{(H0)}, \textnormal{(F1)} and \textnormal{(F2)} be satisfied. The mappings $\F = i_{r_1(\cdot)}^* \tilde{f} i_{r_1(\cdot)} $ and  ${\F}_\Gamma  = i_{r_2(\cdot)}^* \tilde{f_\Gamma } i_{r_2(\cdot)}$ are pseudomonotone and bounded from $\WH$ into $\K(\WH^*)$.
\end{proposition}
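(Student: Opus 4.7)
The strategy is to exploit the fact that $\F$ factors as $\F = i_{r_1(\cdot)}^{*} \circ \tilde f \circ i_{r_1(\cdot)}$ through the reflexive spaces $L^{r_1(\cdot)}(\Omega)$ and $L^{r_1'(\cdot)}(\Omega)$. By Proposition~\ref{proposition_embeddings}(ii) the embedding $i_{r_1(\cdot)}$ is compact (assumption (F2) guarantees $r_1(\cdot)<p^{*}(\cdot)$), and by Schauder's theorem so is its adjoint $i_{r_1(\cdot)}^{*}$. Because of this double compactness, the $\liminf$-inequality appearing in the definition of multi-valued pseudomonotonicity will in fact be obtained as a limit equality, derived from the much stronger property that weakly convergent sequences $u_n \weak u$ in $\WH$ yield selections $\eta_n^{*}\in\F(u_n)$ that converge \emph{strongly} in $\WH^{*}$. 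The analysis for $\F_\Gamma$ proceeds by the same scheme with $i_{r_1(\cdot)}$ replaced by the compact trace operator $i_{r_2(\cdot)}$ from Proposition~\ref{proposition_embeddings}(iii).

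For the structural properties of $\F(u)$, nonemptiness follows from the measurable selection already obtained in the preamble to the proposition using (F1). The growth condition (F2), together with the norm-modular relations and the continuity of $i_{r_1(\cdot)}^{*}$, yields an estimate of the form $\|\F(u)\|_{\WH^{*}} \le c(1+\|u\|^{\theta})$, so $\F$ sends bounded sets to bounded sets. For closedness and convexity of $\F(u)$ in $\WH^{*}$, I would use that $\tilde f(u)$ is a bounded closed convex subset of the reflexive space $L^{r_1'(\cdot)}(\Omega)$, hence weakly compact; its image under the continuous linear map $i_{r_1(\cdot)}^{*}$ is then convex and weakly compact in $\WH^{*}$, in particular norm-closed.

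The main work lies in proving the pseudomonotonicity itself. Fix $u_n \weak u$ in $\WH$, pick $\eta_n^{*}\in\F(u_n)$, and write $\eta_n^{*}=i_{r_1(\cdot)}^{*}\eta_n$ with $\eta_n\in\tilde f(u_n)$. Proposition~\ref{proposition_embeddings}(ii) gives $u_n\to u$ in $L^{r_1(\cdot)}(\Omega)$, while (F2) produces a uniform bound for $\{\eta_n\}$ in the reflexive space $L^{r_1'(\cdot)}(\Omega)$, so along a subsequence $\eta_n \weak \eta$ there. The decisive step is the identification $\eta\in\tilde f(u)$: invoking the Hausdorff upper semicontinuity of $\tilde f$ supplied by Theorem~\ref{t*31}, I can select $\tilde\eta_n\in\tilde f(u)$ with $\|\eta_n-\tilde\eta_n\|_{L^{r_1'(\cdot)}(\Omega)}\to 0$; hence $\tilde\eta_n\weak\eta$ in $L^{r_1'(\cdot)}(\Omega)$, and the weak closedness of the closed convex set $\tilde f(u)$ forces $\eta\in\tilde f(u)$. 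Schauder's theorem then yields $\eta_n^{*}\to \eta^{*}:=i_{r_1(\cdot)}^{*}\eta\in\F(u)$ strongly in $\WH^{*}$, whence $\la \eta_n^{*},u_n-v\ra\to \la \eta^{*},u-v\ra$ for every $v\in\WH$. This $v$-independent limit delivers the required pseudomonotonicity inequality (the $\limsup$-hypothesis on $\la \eta_n^{*},u_n-u\ra$ is in fact not needed in this argument). The principal obstacle in the outline is precisely the passage $\eta\in\tilde f(u)$ under only weak convergence of $\eta_n$; this is the single place where the upper semicontinuity of $f(x,\cdot)$ enters essentially, and it is handled entirely by appealing to Theorem~\ref{t*31} together with the weak closedness of $\tilde f(u)$.
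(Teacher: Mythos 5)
Your proposal follows essentially the same route as the paper: factorization through $L^{r_1(\cdot)}(\Omega)$ and $L^{r_1'(\cdot)}(\Omega)$, compactness of the embedding/trace and its adjoint, a uniform $L^{r_1'(\cdot)}$-bound on selections from (F2), and the decisive identification $\eta\in\tilde f(u)$ via the Hausdorff upper semicontinuity supplied by Theorem~\ref{t*31} together with the weak closedness of the closed convex set $\tilde f(u)$. The paper packages the same ingredients slightly differently (it first proves the graph of $\F$ is sequentially weakly closed in $\WH\times\WH^*$, then derives generalized pseudomonotonicity, then pseudomonotonicity for bounded $\K(X^*)$-valued maps), while you go directly to the pseudomonotonicity inequality and additionally observe that the $\limsup$-hypothesis is never invoked because compactness forces strong convergence of selections; this observation is correct but not a substantive deviation. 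One small caution: the step producing $\eta_n^*\to\eta^*$ strongly proceeds along a subsequence, and the final $\liminf$-inequality must be argued for the whole sequence via the usual subsequence-of-subsequence device, keeping in mind that the limit element $\eta^*\in\F(u)$ may a priori depend on the subsequence (which is acceptable in the multi-valued definition since the required element $u^*(v)$ may depend on $v$, but worth stating explicitly).
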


\begin{proof}
    First, let us note that $\tilde{f}(u)\in \K(L^{r_1'(\cdot)}(\Omega))$ for all $u\in L^{r_1(\cdot)}(\Omega)$.  In fact, the convexity of $\tilde{f}(u)$ and the boundedness of $\tilde{f}$ (as a multi-valued mapping) follow directly from (F1) and (F2).  The proof of the closedness of $\tilde{f}(u)$ is a direct consequence of the fact that $f(x,t)$ is a closed bounded interval in $\R$ for a.\,a.\,$x\in \Omega$ and for all $t\in \R$.

    Next, we show that the graph of $\F$ is (sequentially) weakly closed in $\WH \times \WH^*$.   Assume that $\{u_n\}_{n\in\N}$ and $\{u_n^*\}_{n\in\N}$ are sequences in $\WH$ and $\WH^*$, respectively, such that
    \begin{align}
        u_n \rh u \quad        & \text{in } \WH,\label{36}          \\
        u_n^* \rh u^* \quad    & \text{in } \WH^*,\label{37}        \\
        u_n^*\in \F(u_n) \quad & \text{for all }n\in \N. \label{38}
    \end{align}
    Let us prove that
    \begin{equation}
        \label{39}
        u^*\in \F(u) .
    \end{equation}
    Since $u_n^*\in i_{r_1(\cdot)}^* \tilde{f} i_{r_1(\cdot)} (u_n)$, there exists $\tilde{u}_n\in \tilde{f}(i_{r_1(\cdot)} (u_n)) = \tilde{f}(u_n)$ such that $u_n^* = i_{r_1(\cdot)}^* (\tilde{u}_n) = \tilde{u}_n|_{\WH}$. It follows from \eqref{36} and the compactness of the embedding $i_{r_1(\cdot)}$ that
    \begin{equation}
        \label{39a}
        u_n \to u \quad \text{in } L^{r_1(\cdot)}(\Om) .
    \end{equation}
    Hence, from Theorem \ref{t*31} and the growth condition in hypothesis (F2) it follows that $h^*(\tilde{f}(u_n), \tilde{f}(u))\to 0$, and thus
    \begin{equation*}
        \inf_{w^*\in \tilde{f}(u)} \l\|\tilde{u}_n - w^* \r\|_{r_1'(\cdot)} \to 0 .
    \end{equation*}
    Consequently, there is a sequence $\{w^*_n\}_{n\in\N}\subset \tilde{f}(u)$ such that $\|\tilde{u}_n - w_n^* \|_{r_1'(\cdot)} \to 0$. Since $\tilde{f}(u)$ is bounded in $L^{r_1'(\cdot)}(\Omega)$, by passing to  a subsequence if necessary, we can assume that $w^*_n \rh w^*_0$ in $L^{{r_1'(\cdot)}}(\Omega)$ for some $w^*_0\in L^{r_1'(\cdot)}(\Omega)$.  Moreover, $w^*_0\in \tilde{f}(u)$ by the convexity and closedness of $\tilde{f}(u)$.  We have
    \begin{equation}
        \label{40a}
        \tilde{u}_n \rh w^*_0 \;\quad \text{in } L^{r_1'(\cdot)}(\Omega) ,
    \end{equation}
    and from the compactness of $i^*_{r_1(\cdot)}$,
    \begin{align*}
        u^*_n = i^*_{r_1(\cdot)} (\tilde{u}_n) \to  i^*_{r_1(\cdot)} (w^*_0) = w^*_0|_{\WH} \quad \text{in } \WH^* .
    \end{align*}
    From \eqref{37}, we obtain $u^* =  i^*_{r_1(\cdot)} (w^*_0) \in  i^*_{r_1(\cdot)} \tilde{f}(u) = \F (u)$.  Hence, \eqref{39} is proved.

    As a direct consequence of this property, we see that $\F(u)$ is closed in $X^*$. Furthermore, from the growth condition in (F2), we see that $\tilde{f}$ is  bounded  from $L^{r_1(\cdot)}(\Omega)$ into $2^{L^{r_1'(\cdot)}(\Omega)}$ and thus $\F$ is a bounded mapping from $\WH$ into $\K(\WH^*)$. Therefore, to prove its pseudomonotonicity, we only need to check that $\F$ is generalized pseudomonotone. For this purpose, let $\{u_n\}_{n\in\N}$ and  $\{u_n^*\}_{n\in\N}$ be sequences satisfying \eqref{36}--\eqref{38} and let  $\{\tilde{u}_n\}_{n\in\N}$ as well as $w_0^*$ be as above.  We have, from \eqref{39a} and \eqref{40a},
    \begin{align*}
        \la u_n^* , u_n \ra  =  \la i_{r_1(\cdot)}^* (\tilde{u}_n) , u_n \ra
        & =   \la \tilde{u}_n ,  u_n \ra_{L^{r_1'(\cdot)}(\Om) , L^{r_1(\cdot)}(\Om)}\\
        &  \to \la w_0^* ,  u \ra_{L^{r_1'(\cdot)}(\Om) , L^{r_1(\cdot)}(\Om)}\\
        &=  \la  i_{r_1(\cdot)}^* (w_0^* ) ,  u \ra
        =  \la  u^*  ,  u \ra .
    \end{align*}
    This limit shows that $\F$ is generalized pseudomonotone and thus pseudomonotone.  It also follows from the arguments above that $\F$ is bounded. The proof of the pseudomonotonicity and boundedness of ${\F}_{\Gamma}$ follows  similar arguments.
\end{proof}

We are now ready to prove Theorem \ref{T101}.

\begin{proof}[Proof of Theorem \ref{T101}]
    We are going to apply Theorem \ref{surjectivitytheorem}. Since $A$ is continuous, strictly monotone and bounded on $\WH$ with domain $D(A) = \WH$, it is a (single-valued) bounded and pseudomonotone mapping from $\WH$ to $2^{\WH^*}$. It follows from Proposition \ref{p35} that  $A + \F +  \F_\Gamma$ is a pseudomonotone and bounded mapping  from $\WH$ into $2^{\WH^*}$.

    We note that $ \pa I_K$ is a maximal monotone mapping from $\WH$ to $2^{\WH^*}$ with domain $D( \pa I_K) = K$. According to Theorem \ref{surjectivitytheorem}, under the coercivity condition \eqref{101}, problem \eqref{problem} has at least one solution.
\end{proof}

A straightforward consequence of Theorem \ref{T101} is the following result.

\begin{corollary}
    \label{C101}
    Let hypotheses \textnormal{(H0)}, \textnormal{(F1)} and \textnormal{(F2)} be satisfied and suppose that for fixed $u_0\in K$ the following coercivity condition holds
    \begin{align}
        \label{101}
        \lim_{\substack{\|u\|_{1,\mathcal{H}} \to \infty \\ u\in K}} \left[ \inf_{\substack{\eta^* \in \mathcal{F}(u)\\ \zeta^* \in \mathcal{F}_\Gamma (u)}}{\l\langle Au+\eta^*+\zeta^*,u-u_0\r\rangle }\right]=\infty.
    \end{align}
    Then problem \eqref{problem} has at least one solution.
\end{corollary}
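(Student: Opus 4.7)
The plan is to derive the coercivity condition \eqref{1-01} of Theorem \ref{T101} from the stronger limit condition \eqref{101} and then invoke Theorem \ref{T101} directly. Since hypotheses \textnormal{(H0)}, \textnormal{(F1)}, \textnormal{(F2)} are already assumed, the only remaining task is to exhibit a radius $R \geq \|u_0\|_{1,\mathcal{H}}$ for which $K\cap B_R(0)\neq\emptyset$ and the pointwise inequality in \eqref{1-01} holds on the sphere $\{u\in K\,:\,\|u\|_{1,\mathcal{H}}=R\}$.

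First, I would translate \eqref{101} into a quantitative statement. By the definition of the limit, there exists $R_1>0$ such that for every $u\in K$ with $\|u\|_{1,\mathcal{H}}\geq R_1$,
\begin{equation*}
    \inf_{\substack{\eta^*\in\mathcal{F}(u)\\ \zeta^*\in\mathcal{F}_\Gamma(u)}}\langle Au+\eta^*+\zeta^*,u-u_0\rangle \;\geq\; 1 \;>\;0.
\end{equation*}
Next, I set $R:=\max\{R_1,\,\|u_0\|_{1,\mathcal{H}}+1\}$. Then $R>\|u_0\|_{1,\mathcal{H}}$, so in particular $u_0\in K\cap B_R(0)$, which verifies the nonemptiness requirement of Theorem \ref{T101}. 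Moreover, for any $u\in K$ with $\|u\|_{1,\mathcal{H}}=R\geq R_1$, every choice of $\eta^*\in\mathcal{F}(u)$ and $\zeta^*\in\mathcal{F}_\Gamma(u)$ satisfies
\begin{equation*}
    \langle Au+\eta^*+\zeta^*,u-u_0\rangle \;\geq\; \inf_{\substack{\eta^*\in\mathcal{F}(u)\\ \zeta^*\in\mathcal{F}_\Gamma(u)}}\langle Au+\eta^*+\zeta^*,u-u_0\rangle \;>\;0,
\end{equation*}
which is exactly condition \eqref{1-01}. Hence Theorem \ref{T101} applies and yields a solution of \eqref{problem}.

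There is essentially no obstacle here: the corollary is a routine rephrasing in which the uniform infimum-type coercivity \eqref{101} is strengthened to the pointwise positivity statement \eqref{1-01} required by Theorem \ref{T101}. The only minor care needed is to enlarge $R$ beyond $\|u_0\|_{1,\mathcal{H}}$ to ensure $u_0\in K\cap B_R(0)$, which is handled by the $+1$ in the choice of $R$.
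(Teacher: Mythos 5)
Your proof is correct and matches the paper's intent exactly: the paper labels this a "straightforward consequence" of Theorem~\ref{T101} without spelling out the details, and your argument supplies precisely those details by unpacking the limit \eqref{101} into the pointwise positivity on the sphere $\{u\in K : \|u\|_{1,\mathcal{H}}=R\}$ together with the nonemptiness $u_0\in K\cap B_R(0)$ (which the $+1$ in the choice of $R$ secures, since $B_R(0)$ is open). No gaps.
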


\section{Noncoercive Case: Proof of Theorem \ref{T102}}\label{section_4}

In order to prove Theorems \ref{T102} and \ref{T103}, let us first establish the following general existence and enclosure theorem for solutions of \eqref{problem} if a finite number of sub- and supersolutions exist and $f$ has a local growth between those sub- and supersolutions.

\begin{theorem}
    \label{T4-1}
    Let hypotheses \textnormal{(H0)} and \textnormal{(F1)} be satisfied and let $\un{u}_i$  $(i=1,\dots , k)$ be subsolutions and $\ov{u}_j$ $(j=1,\dots , m)$ be supersolutions of \eqref{problem} such that
    \begin{equation*}
        \un{u} = \max\l\{ \un{u}_i \,:\, i = 1,\dots , k\r\} \le \ov{u} = \min\l\{\ov{u}_j \,:\, j = 1,\dots , m\r\} \quad\text{a.\,e.\,in } \Omega,
    \end{equation*}
    and $\un{u}_i \vee K \subset K$  for all $i\in \{1,\dots , k\}$ and $\ov{u}_j \wedge K \subset K$ for all $j\in \{1,\dots , m\}$.

    Suppose there exist $\tau_1\in C(\close)$, $\tau_2\in C(\Gamma)$,  $1<\tau_1(x)<p^*(x)$ for all $x\in\close$, $1<\tau_2(x)<p_*(x)$ for all $x\in\Gamma$ such that
    \begin{align}
        \label{g1}
        \begin{split}
            \sup \l\{|\eta| \,:\, \eta \in f(x,s)\r\} &\leq k_\Omega(x)\quad\text{for a.\,a.\,}x\in\Omega,\\
            \sup \l\{|\zeta| \,:\, \zeta \in f_\Gamma(x,s)\r\} &\leq k_\Gamma(x)\quad\text{for a.\,a.\,}x\in\Gamma,
        \end{split}
    \end{align}
    for all $s\in [\underline{u}(x),\overline{u}(x)]$ and for some $k_\Omega\in L^{\tau_1'(\cdot)}(\Omega)$, $k_\Gamma \in L^{\tau_2'(\cdot)}(\Gamma)$.

    Then, there exists a solution $u$ of \eqref{problem} such that
    \begin{align*}
        \un{u} \leq u \le \overline{u} \quad \text{a.\,e.\,in } \Omega.
    \end{align*}
\end{theorem}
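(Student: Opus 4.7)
The plan is to reduce Theorem~\ref{T4-1} to the coercive existence result of Theorem~\ref{T101} via a truncation--penalization device, and to recover the enclosure $\un u\le u\le\ov u$ by testing the modified inequality against each individual $\un u_i$ and $\ov u_j$, exploiting the lattice hypotheses $\un u_i\vee K\subset K$ and $\ov u_j\wedge K\subset K$.

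For the truncation I set $T(x,s):=\max\{\un u(x),\min\{s,\ov u(x)\}\}$ and define $\hat f(x,s):=f(x,T(x,s))$, $\hat f_\Gamma(x,s):=f_\Gamma(x,T(x,s))$. By \eqref{g1} the associated Nemytskij multifunctions $\hat{\F},\hat{\F}_\Gamma$ admit selections pointwise bounded by $k_\Om,k_\Gamma$, so Proposition~\ref{p35} applies and shows that $\hat{\F}+\hat{\F}_\Gamma$ is pseudomonotone and bounded from $\WH$ into $\K(\WH^*)$. I also introduce the continuous, bounded, monotone penalty $B\colon\WH\to\WH^*$ given by
\begin{equation*}
    \la Bu,v\ra:=\into\bigl[(u-\ov u)^+-(\un u-u)^+\bigr]v\,\diff x,
\end{equation*}
and consider, for a parameter $\lambda>0$ to be fixed, the auxiliary multi-valued variational inequality
\begin{equation*}
    u\in K\colon\ 0\in Au+\pa I_K(u)+\hat{\F}(u)+\hat{\F}_\Gamma(u)+\lambda Bu\quad\text{in }\WH^*.
\end{equation*}
Fixing any $v_0\in K$, the element $u_0:=\ov u\wedge(\un u\vee v_0)$ lies in $K\cap[\un u,\ov u]$ by the lattice hypotheses and satisfies $Bu_0=0$. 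Using the $(\S_+)$-structure and the modular estimates of Proposition~\ref{prop1}, the uniform $L^{\tau_1'(\cdot)}(\Om)$- and $L^{\tau_2'(\cdot)}(\Gamma)$-bounds on the Nemytskij selections, together with the nonnegative contribution $\lambda\la Bu,u-u_0\ra\ge0$ coming from monotonicity of $B$, one verifies the coercivity condition \eqref{1-01} for $R$ sufficiently large; Theorem~\ref{T101} then produces a solution $u\in K$ of the auxiliary problem.

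The decisive step is to show $\un u\le u\le\ov u$ a.e., since then $T(\cdot,u)=u$, $Bu=0$, and $u$ solves \eqref{problem}. For the upper bound, fix $j\in\{1,\dots,m\}$: the test function $v:=u\wedge\ov u_j=u-(u-\ov u_j)^+$ lies in $\ov u_j\wedge K\subset K$ and is admissible in the auxiliary VI, while $v':=u\vee\ov u_j=\ov u_j+(u-\ov u_j)^+\in\ov u_j\vee K$ is admissible in Definition~\ref{D103}\textnormal{(iii)} for $\ov u_j$. Writing $w:=(u-\ov u_j)^+$ and adding the two resulting inequalities produces
\begin{equation*}
    \la A\ov u_j-Au,\nabla w\ra+\into(\ov\eta_j-\hat\eta)w\,\diff x+\int_\Gamma(\ov\zeta_j-\hat\zeta)w\,\diff\sigma\ge\lambda\into(u-\ov u)^+w\,\diff x.
\end{equation*}
On $\{w>0\}\subset\{u>\ov u\}$ one has $(u-\ov u)^+\ge w$, the gradient term on the left is nonpositive by monotonicity of $A$, and the Nemytskij selections are controlled pointwise by $2k_\Om,2k_\Gamma$; H\"older's inequality and the compact embeddings of Proposition~\ref{proposition_embeddings} then give an estimate of the form $\lambda\|w\|_{L^2}^2\le C$ with $C$ independent of $\lambda$, forcing $w\equiv0$ once $\lambda$ is large enough. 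This yields $u\le\ov u_j$ for each $j$, hence $u\le\ov u$; the symmetric argument using $\un u_i\vee K\subset K$ and Definition~\ref{D102}\textnormal{(iii)} delivers $u\ge\un u$.

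The main obstacle is this comparison step. Because $f$ and $f_\Gamma$ are multi-valued, the selections $\hat\eta\in\hat{\F}(u)$ and $\ov\eta_j\in f(\cdot,\ov u_j)$ cannot be cancelled pointwise even where the truncation $T(\cdot,u)$ coincides with $\ov u_j$, so monotonicity of $A$ alone is insufficient to conclude $w=0$. The penalty term $\lambda B$ is introduced precisely to dominate this uncontrolled Nemytskij discrepancy, and calibrating $\lambda$ against $\|k_\Om\|_{\tau_1'(\cdot)}$ and $\|k_\Gamma\|_{\tau_2'(\cdot),\Gamma}$ within the Musielak--Orlicz variable-exponent framework is the delicate technical point.
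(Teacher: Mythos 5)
Your truncation--plus--penalty outline is in the right spirit, and the reduction to a coercive problem followed by a comparison step is indeed the paper's strategy. But the specific device you choose (a linear interior penalty $\lambda B$ and the pointwise truncation $\hat f(x,s)=f(x,T(x,s))$) does not close the argument; there are three genuine gaps, and the paper's machinery is there precisely to avoid them.

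First, the ``$\lambda$ large forces $w\equiv 0$'' step does not work. The auxiliary inequality, and hence its solution $u=u_\lambda$, depends on $\lambda$. From $\lambda\|w_\lambda\|_{L^2}^2\le C$ you only get $\|w_\lambda\|_{L^2}\to 0$ as $\lambda\to\infty$, not $w_\lambda=0$ for any finite $\lambda$. To salvage this you would have to pass to the limit $\lambda\to\infty$ in the auxiliary variational inequality, establish uniform a priori bounds on $u_\lambda$ in $\WH$, extract a weak limit, and show it solves \eqref{problem}. That is a nontrivial additional argument you do not give. The paper instead uses the penalty $b(x,u)$ with $b(x,u)=[u-\ov u(x)]^{q(x)-1}$ (and the symmetric expression below $\un u$); after the same ``add the two tested inequalities'' manipulation, \emph{all} lower-order discrepancies cancel exactly (see the next two points) and what remains is $0\le\int b(x,u)(\un u_s-u)^+\,\diff x\le 0$, which forces the set $\{\un u_s>u\}$ to have measure zero without any parameter.

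Second, the boundary Nemytskij term is not controlled by your interior penalty at all. Your $B$ is supported in $\Omega$, so $\la Bu,w\ra$ gives you $\|w\|_{L^2(\Omega)}^2$, but the discrepancy $\int_\Gamma(\ov\zeta_j-\hat\zeta)w\,\diff\sigma$ lives on $\Gamma$ and is estimated in $L^{\tau_2(\cdot)}(\Gamma)$; the $L^2(\Omega)$ norm of $w$ does not bound the trace norm. The paper handles the boundary by \emph{not} penalizing it, and instead making the boundary discrepancy vanish identically on the relevant set $\{\un u_s>u\}\cap\Gamma$, using the transition functions $U_i,U^j$ and the fact that the truncated selection $\zeta$ equals the fixed function $\un\zeta$ there.

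Third, your truncation $\hat f(x,s)=f(x,T(x,s))$ remains set-valued for $s$ outside $[\un u(x),\ov u(x)]$, so the selection $\hat\eta$ obtained from solving the auxiliary problem is an arbitrary element of $f(x,\un u(x))$ (resp.\ $f(x,\ov u(x))$) on $\{u<\un u\}$ (resp.\ $\{u>\ov u\}$). It need not match the specific $\un\eta_s\in f(x,\un u_s(x))$ coming from Definition~\ref{D102}, and in the multi-sub/super setting it need not even match the ``patched'' selection associated with $\un u=\max_i\un u_i$. The paper's $f_0$ collapses to \emph{singletons} $\{\un\eta(x)\}$ and $\{\ov\eta(x)\}$ outside $[\un u,\ov u]$, with $\un\eta$ assembled piecewise from the $\un\eta_i$ on a partition of $\Omega$, and then the transition operators $T_i,T^j$ (resp.\ $U_i,U^j$) are engineered so that on the test set $\{\un u_s>u\}$ the quantity $T_s(x,u(x))$ equals exactly $|\un\eta_s(x)-\un\eta(x)|$, producing the pointwise cancellation
$(\un\eta-\un\eta_s)-|\un\eta-\un\eta_s|\le0$. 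You have nothing playing this role; monotonicity of $A$ alone cannot absorb the sign-indefinite term $\int(\ov\eta_j-\hat\eta)w$, and your own closing paragraph acknowledges this is ``the delicate technical point'' without resolving it. These three omissions are not cosmetic; without the $q(x)-1$ power of $b$, the singleton-valued extension $f_0$, and the transition terms $T_i,T^j,U_i,U^j$, the enclosure step does not close.
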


\begin{proof}
    First, note that by increasing $\tau_1$ and $\tau_2$ in Definitions \ref{D102} and \ref{D103} for each $\un{u}_i$ and $\ov{u}_j$ and in the growth condition \eqref{g1} appropriately, to simplify the notation we can assume without loss of  generality that the functions $\tau_1$ and $\tau_2$ in the definitions of $\un{u}_i$ $(1\le i \le k)$ and $\ov{u}_j$ $(1\le j \le m)$ in Definitions \ref{D102} and \ref{D103} and in the growth condition \ref{g1}, are the same.

    For $i\in\{1,\dots , k\}$ and $j\in \{1,\dots , m\}$, let $\un{\xi}_i$, $\un{\eta}_i$, and $\ov{\xi}_j$, $\ov{\eta}_j$ be the functions associated with $\un{u}_i$ and $\ov{u}_j$ as in Definitions \ref{D102} and \ref{D103}. We define the truncation function $f_0$ of $f$ as follows: Let
    \begin{align*}
        \Omega_1 = \l\{ x\in \Omega \,:\, \un{u}(x) = \un{u}_1(x)\r\}, \quad \Omega^1 = \l\{ x\in \Om \,:\, \ov{u}(x) = \ov{u}_1(x)\r\},
    \end{align*}
    and define
    \begin{align*}
        \Omega_i &= \left\{ x\in \Omega\setminus {\textstyle \displaystyle \bigcup_{l=1}^{i-1}}\, \Omega_l \,:\, \un{u}(x) = \un{u}_i (x) \right\},\\
        \Omega^j &= \left\{ x\in \Omega\setminus{\textstyle \displaystyle \bigcup_{l=1}^{j-1}}\, \Omega^l \,:\, \ov{u}(x) = \ov{u}^j (x) \right\}
    \end{align*}
    for all $i = 2,\dots, k$ and for all $j = 2,\dots, m$. Next, we define
    \begin{align*}
        \un{\eta} = \sum_{i=1}^k \un{\eta}_i \chi_{\Om_i}
        \quad \text{and}\quad
        \overline{\eta} = \sum_{j=1}^m \overline{\eta}_j \chi_{\Omega^j} ,
    \end{align*}
    where $\chi_A$ is the characteristic function of $A \subset \Omega$.  From their definitions, we have $\un{\eta}, \ov{\eta}\in L^{\tau_1' (\cdot)}(\Om)$ and furthermore, $\un{\eta}(x) \in f(x,\un{u}(x))$ and $\ov{\eta}(x) \in f(x,\ov{u}(x)) $ for a.\,a.\,$ x\in \Omega$.

    Let $f_0 \colon \Omega\times \R \to 2^\R$ be defined by
    \begin{align}
        \label{truncated1}
        f_0(x,u) =
        \begin{cases}
            \{\un{\eta}(x)\} & \text{if } u < \un{u}(x)                 \\[1ex]
            f(x,u)           & \text{if } \un{u}(x) \le u \le \ov{u}(x) \\[1ex]
            \{\ov{\eta}(x)\} & \text{if } u > \ov{u}(x).
        \end{cases}
    \end{align}

    Similarly, let
    \begin{align*}
        \Gamma_1 = \l\{ x\in \Gamma \,:\, \un{u}(x) = \un{u}_1(x)\r\}, \quad \Gamma^1 = \l\{ x\in \Gamma \,:\, \ov{u}(x) = \ov{u}_1(x)\r\},
    \end{align*}
    and
    \begin{align*}
        \Gamma_i &= \left\{ x\in \Gamma\setminus {\textstyle \displaystyle\bigcup_{l=1}^{i-1}}\, \Gamma_l \,:\, \un{u}(x) = \un{u}_i (x) \right\},\\
        \Gamma^j &= \left\{ x\in \Gamma\setminus {\textstyle \displaystyle\bigcup_{l=1}^{j-1}}\, \Gamma^l \,:\, \ov{u}(x) = \ov{u}^j (x) \right\}
    \end{align*}
    for all $i = 2,\dots, k$ and for all $j = 2,\dots, m$. We define
    \begin{align*}
        \un{\zeta} = \sum_{i=1}^k \un{\zeta}_i \chi_{\Gamma_i}
        \quad\text{and}\quad
        \ov{\zeta} = \sum_{j=1}^m \ov{\zeta}_j \chi_{\Gamma^j} ,
    \end{align*}
    where, as above, $\chi_A$ is the characteristic function of $A \subset \Gamma$.  We also have $\un{\zeta}, \ov{\zeta}\in L^{\tau_2' (\cdot)}(\Gamma)$ and $\un{\zeta}(x) \in f_\Gamma (x,\un{u}(x))$ and $\ov{\zeta}(x) \in f_\Gamma(x,\ov{u}(x)) $ for a.\,a.\,$ x\in \Gamma$.

    Let $f_{0\Gamma} \colon \Gamma\times \R \to 2^\R$ be defined by
    \begin{align}
        \label{truncated2}
        f_{0\Gamma}(x,u) =
        \begin{cases}
            \{\un{\zeta}(x)\} & \text{if }  u < \un{u}(x)                 \\[1ex]
            f_\Gamma(x,u)     & \text{if }  \un{u}(x) \le u \le \ov{u}(x) \\[1ex]
            \{\ov{\zeta}(x)\} & \text{if }  u > \ov{u}(x).
        \end{cases}
    \end{align}
    Then,  $f_0$ and $f_{0\Gamma}$ given in \eqref{truncated1} and \eqref{truncated2}, respectively, satisfy (F1).  Moreover, it follows from (\ref{g1}) and the definitions of $f_0$ and $f_{0\Gamma}$ that
    \begin{align}
        \label{4-3}
        \begin{split}
            &\sup\{|v| \,:\, v\in f_0(x,u)\} \le k_\Om(x) + |\un{\eta}(x)| + |\ov{\eta}(x)|\quad \text{for a.\,a.\,} x\in \Omega,\\
            &\sup\{|v| \,:\, v\in f_{0\Gamma}(x,u)\} \le k_\Gamma(x) + |\un{\zeta}(x)| + |\ov{\zeta}(x)|
            \quad \text{for a.\,a.\,}x\in \Gamma,
        \end{split}
    \end{align}
    for all $u\in\R$, where $k_\Om +|\un{\eta}|+|\ov{\eta}|\in L^{\tau_1'(\cdot)}(\Om)$ and $k_\Gamma +|\un{\zeta}|+|\ov{\zeta}|\in L^{\tau_2'(\cdot)}(\Gamma)$.

    In particular, $f_0$ and $f_{0\Gamma}$ satisfy (F2) with $\beta=\beta_\Gamma=0$ and $\al = k_\Omega+|\un{\eta}|+|\ov{\eta}|$, $\al_\Gamma = k_\Gamma+|\un{\zeta}|+|\ov{\zeta}|$.  It follows from Proposition \ref{p35} that the mappings $ i_{\tau_1(\cdot)}^* \tilde{f}_0  i_{\tau_1(\cdot)}$ and $ i_{\tau_2(\cdot)}^* \tilde{f}_{0\Ga}  i_{\tau_2(\cdot)}$ are bounded and pseudomonotone from $\WH$ to $\K(\WH^*)$.

    Next, let us define a truncation-regularization function $b$ as follows.  For $x\in\Om$ and $u\in \R$, let
    \begin{align}
        \label{4-4}
        b(x,u) =
        \begin{cases}[u-\ov{u}(x)]^{q(x)-1}  & \text{if } u > \ov{u}(x)                  \\[1ex]
             0                       & \text{if } \un{u}(x) \le  u \le \ov{u}(x) \\[1ex]
             -[\un{u}(x)-u]^{q(x)-1} & \text{if } u < \un{u}(x).
        \end{cases}
    \end{align}

    Here in what follows, we denote by $C$ a generic positive constant that may change from line to line. Since $\un{u},\ov{u}\in W^{1,\Hi}(\Om)$ and $W^{1,\Hi}(\Om) \emb L^{\q}(\Om)$, we see that
    \begin{equation}
        \label{4-5}
        |b(x,u)| \le a_1(x) + C |u|^{q(x)-1},
    \end{equation}
    for a.\,a.\,$x\in \Omega$ and for all $u\in \R$, where $a_1\in L^{q'(\cdot)}(\Omega)$.

    This implies that the mapping $\B \colon L^{\q}(\Omega) \to L^{q'(\cdot)}(\Omega)$ given by
    \begin{align*}
        \langle \B(u) , v\rangle = \int_\Omega b(x,u) v \,\diff x\quad \text{for all } u,v\in L^{\q}(\Omega)
    \end{align*}
    is continuous and bounded.  Moreover, thanks to the compactness of the embedding $i_{\q} \colon \WH \emb L^{\q}(\Omega)$, the mapping $i_{\q}^* \B i_\q \colon \WH \to \WH^*$ is bounded and completely continuous.  As a consequence, the mapping  $i_{\q}^* \B i_\q$ is a (single-valued) pseudomonotone and bounded mapping from $\WH$ into $\WH^*$.

    Furthermore, there is $a_2>0$ such that
    \begin{align}
        \label{4-6}
        \begin{split}
            \l\la i_{\q}^* \B i_\q (u) , u\r\ra
            & = \l\la \B  (u) , u\r\ra_{L^{q'(\cdot)}(\Omega) , L^\q(\Omega)} \\
            & = \int_\Om b(x,u) u \,\diff x \\
            & \geq a_2 \int_\Om |u|^{q(x)} \,\diff x - C  \quad \text{for all }u\in \WH.
        \end{split}
    \end{align}

    For $i\in \{1,\dots , k\}$, $j\in \{1,\dots, m\}$, $x\in \Omega$ and $u\in \R$, we define
    \begin{align*}
        T_i(x,u) &= |\un{\eta}_i(x)  -\un{\eta}(x)|\hat{\sigma} \left( \frac{u-\un{u}_i(x)}{\un{u}(x)-\un{u}_i(x)}\right),\\
        T^j(x,u) &= |\ov{\eta}_j (x) - \ov{\eta}(x)| \left[1-\hat{\sigma}\left(\frac{u-\ov{u}(x)}{\ov{u}_j(x) - \ov{u}(x)} \right) \right],
    \end{align*}
    where
    \begin{align*}
        \hat{\sigma}(s) =
        \begin{cases}
            1,   & \text{if }s\le 0,        \\
            1-s, & \text{if }0 \le s \le 1, \\
            0,   & \text{if } s \ge 1.
        \end{cases}
    \end{align*}
    Similarly, for $i\in \{1,\dots , k\}$, $j\in \{1,\dots, m\}$, $x\in \Gamma$, and $u\in \R$, we define
    \begin{align*}
        U_i(x,u) &= |\un{\zeta}_i(x)  -\un{\zeta}(x)|\hat{\sigma} \left( \frac{u-\un{u}_i(x)}{\un{u}(x)-\un{u}_i(x)}\right),\\
        U^j(x,u) &= |\ov{\zeta}_j (x) - \ov{\zeta}(x)| \left[1-\hat{\sigma}\left(\frac{u-\ov{u}(x)}{\ov{u}_j(x) - \ov{u}(x)} \right) \right].
    \end{align*}

    Straightforward calculations show that $T_i(\cdot, u)$, $T^j(\cdot, u)\in L^{\tau_1'(\cdot)}(\Omega)$ whenever $u\in L^{\tau_1(\cdot)}(\Omega)$ and
    \begin{align*}
        0 \le T_i(x,u) \le |\un{\eta}_i(x)  -\un{\eta}(x)|
        \quad\text{and}\quad
        0 \le T^j(x,u) \le |\ov{\eta}_j - \ov{\eta}(x)|
    \end{align*}
    for a.\,a.\,$x\in\Om$ and for all $u\in \R$. It follows that $\T_i \colon u\mapsto T_i(\cdot,u)$, $\T^j \colon u\mapsto T^j(\cdot, u)$ $ (1\le i \le k, 1\le j\le m)$ are bounded and continuous operators from $L^{\tau_1(\cdot)}(\Omega)$ to $L^{\tau_1'(\cdot)}(\Omega)$.  Hence, due to the compactness of the embedding operator, $i_{\tau_1(\cdot)}^* \T_i i_{\tau_1(\cdot)}$ and $i_{\tau_1(\cdot)}^* \T^j i_{\tau_1(\cdot)}$ are completely continuous and are thus (single-valued) pseudomonotone mappings from $\WH$ into $\WH^*$.

    Analogously, $U_i(\cdot, u)$, $U^j(\cdot, u)\in L^{\tau_2'(\cdot)}(\Ga)$ whenever $ u\in L^{\tau_2(\cdot)}(\Ga)$ and
    \begin{align*}
        0 \le U_i(x,u) \le |\un{\zeta}_i(x)  -\un{\zeta}(x)|
        \quad\text{and}\quad
        0 \le T^j(x,u) \le |\ov{\zeta}_j - \ov{\zeta}(x)|
    \end{align*}
    for a.\,a.\,$x\in\Ga$ and for all $u\in \R$. Thus, $\U_i \colon u\mapsto U_i(\cdot,u)$, $\U^j \colon u\mapsto U^j(\cdot, u)$ $(1\le i \le k, 1\le j\le m)$ are bounded and continuous operators from $L^{\tau_2(\cdot)}(\Gamma)$ to $L^{\tau_2'(\cdot)}(\Gamma)$.  Therefore, by the compactness of the trace operator, $i_{\tau_2(\cdot)}^* \U_i i_{\tau_2(\cdot)}$ and $i_{\tau_2(\cdot)}^* \U^j i_{\tau_2(\cdot)}$ are  completely continuous and  are  (single-valued) pseudomonotone mappings from $\WH$ into $\WH^*$ as well.

    Let us consider the following auxiliary variational inequality:  Find $u\in  K$ and $\eta\in L^{\tau_1'(\cdot)}(\Omega)$,  $\zeta\in L^{\tau_2'(\cdot)}(\Gamma)$, such that
    \begin{align}
        \label{4-7}
        \begin{split}
            \eta(x)&\in f_0(x,u(x)) \quad \text{for a.\,a.\,} x\in \Omega,\\
            \zeta(x)&\in f_{0\Ga}(x,u(x))\quad \text{for a.\,a.\,} x\in \Ga,
        \end{split}
    \end{align}
    and
    \begin{align}
        \label{4-8}
        \begin{split}
            &\la A u, v-u\ra +\int_\Om \eta (v-u) \,\diff x + \int_\Om \zeta (v-u) \,\diff \sigma + \int_\Om b(x,u) (v-u) \,\diff x \\
            &- \sum_{i=1}^k \int_\Om T_i (x,u) (v-u) \,\diff x + \sum_{j=1}^m \int_\Om T^j (x,u) (v-u) \,\diff x \\
            &- \sum_{i=1}^k \int_\Ga U_i (x,u) (v-u) \,\diff \sigma + \sum_{j=1}^m \int_\Ga U^j (x,u) (v-u) \,\diff \sigma \\
            &\ge 0 \quad \text{for all }v\in K.
        \end{split}
    \end{align}
    The inequality above is equivalent to the following variational inequality:  Find $u\in  K$, $\tilde{\eta} = i_{\tau_1(\cdot)}^* \eta  i_{\tau_1(\cdot)}\in [ i_{\tau_1(\cdot)}^* \tilde{f}_0  i_{\tau_1(\cdot)}](u)$, and $\tilde{\zeta} = i_{\tau_2(\cdot)}^* \zeta  i_{\tau_2(\cdot)}\in [ i_{\tau_2(\cdot)}^* \tilde{f}_{0\Ga}  i_{\tau_2(\cdot)}](u)$,
    such that
    \begin{align*}
        \left \langle Au+ \tilde{\eta} +  \tilde{\zeta} +[i_{q(\cdot)}^* \B i_{q(\cdot)}](u) -\sum_{i=1}^k [i_{\tau_1(\cdot)}^* \T_i i_{\tau_1(\cdot)}](u) + \sum_{j=1}^m [i_{\tau_1(\cdot)}^* \T^j  i_{\tau_1(\cdot)}](u)  \right. \\
        \left. -\sum_{i=1}^k [i_{\tau_2(\cdot)}^* \U_i i_{\tau_2(\cdot)}](u) + \sum_{j=1}^m [i_{\tau_2(\cdot)}^* \U^j  i_{\tau_2(\cdot)}](u), v-u \right\rangle \geq 0 \quad\text{for all }v\in K.
    \end{align*}
    This variational inequality is, in its turn, equivalent to finding $u\in D(\pa I_K) =  K$,  $l\in (\pa I_K)(u)$, and
    \begin{align*}
        \tilde{\eta} = i_{\tau_1(\cdot)}^* \eta  i_{\tau_1(\cdot)}\in [ i_{\tau_1(\cdot)}^* \tilde{f}_0  i_{\tau_1(\cdot)}](u), \quad
        \tilde{\zeta} = i_{\tau_2(\cdot)}^* \zeta  i_{\tau_2(\cdot)}\in [ i_{\tau_2(\cdot)}^* \tilde{f}_{0\Ga}  i_{\tau_2(\cdot)}](u)
    \end{align*}
    such that
    \begin{align}
        \label{4-10}
        \begin{split}
            \mathcal{A}(u, l, \tilde{\eta},  \tilde{\zeta})&:=Au + l +\tilde{\eta} + \tilde{\zeta} +[i_\q^* \B i_\q](u)\\
            &\qquad -\sum_{i=1}^k [i_{\tau_1(\cdot)}^* \T_i i_{\tau_1(\cdot)}](u) + \sum_{j=1}^m [i_{\tau_1(\cdot)}^* \T^j  i_{\tau_1(\cdot)}](u) \\
            &\qquad -\sum_{i=1}^k [i_{\tau_2(\cdot)}^* \U_i i_{\tau_2(\cdot)}](u)+ \sum_{j=1}^m [i_{\tau_2(\cdot)}^* \U^j  i_{\tau_2(\cdot)}](u) = 0
        \end{split}
    \end{align}
    in $\WH^*$. We observe that $\partial I_K$ is a maximal monotone mapping and
    \begin{align*}
         & A + i_{\tau_1(\cdot)}^* \tilde{f}_0  i_{\tau_1(\cdot)} +  i_{\tau_2(\cdot)}^* \tilde{f}_{0\Ga}  i_{\tau_2(\cdot)}  + i_\q^* \B i_\q
        - \sum_{i=1}^k i_{\tau_1(\cdot)}^* \T_i i_{\tau_1(\cdot)}\\
        & + \sum_{j=1}^m i_{\tau_1(\cdot)}^* \T^j  i_{\tau_1(\cdot)}
        -\sum_{i=1}^k i_{\tau_2(\cdot)}^* \U_i i_{\tau_2(\cdot)} + \sum_{j=1}^m  i_{\tau_2(\cdot)}^* \U^j  i_{\tau_2(\cdot)}
    \end{align*}
    is a (multi-valued) pseudomonotone bounded mapping from $\WH$ to $2^{\WH^*}$.

    Hence, to apply the abstract existence result in Corollary 2.3 of \cite{le:ret11}, we only need to check the following coercivity condition:  There exists $u_0\in K$ such that
    \begin{equation}
        \label{4-11}
        \lim_{\substack{\| u \| \to \infty\\u\in  K}}
        \l[
            \inf_{\substack{l\in \pa I_K(u)\\ \tilde{\eta} \in [ i_{\tau_1(\cdot)}^* \tilde{f}_0  i_{\tau_1(\cdot)}](u)\\ \tilde{\zeta}\in [ i_{\tau_2(\cdot)}^* \tilde{f}_{0\Ga}  i_{\tau_2(\cdot)}](u)} }
            \l \langle \mathcal{A}(u, l, \tilde{\eta},  \tilde{\zeta}), u-u_0 \r\rangle\r] = \infty,
    \end{equation}
    see \eqref{4-10}.

    In fact, let $u_0$ be any (fixed) element of $K$.  For any $u\in  K$, any $l\in (\pa I_K)(u)$, we have $0 = I_K (u_0) - I_K(u) \ge \la l , u_0 -u\ra$, i.e., $\la l , u- u_0 \ra \ge 0$. Hence, to prove \eqref{4-11}, one only needs to show that
    \begin{equation}
        \label{4-12}
        \inf_{\substack{\tilde{\eta} \in [ i_{\tau_1(\cdot)}^* \tilde{f}_0  i_{\tau_1(\cdot)}](u)\\ \tilde{\zeta}\in [ i_{\tau_2(\cdot)}^* \tilde{f}_{0\Ga}  i_{\tau_2(\cdot)}](u)}}
        \l\langle \hat{\mathcal{A}}(u,  \tilde{\eta},  \tilde{\zeta}), u-u_0 \r\rangle \to \infty
    \end{equation}
    as $\| u \|_{1,\mathcal{H}}\to\infty$, $u\in  K$, where
    \begin{align*}
        \hat{\mathcal{A}}(u,  \tilde{\eta},  \tilde{\zeta})
        & :=Au + \tilde{\eta} + \tilde{\zeta} +[i_\q^* \B i_\q](u) -\sum_{i=1}^k [i_{\tau_1(\cdot)}^* \T_i i_{\tau_1(\cdot)}](u)\\
        & \qquad + \sum_{j=1}^m [i_{\tau_1(\cdot)}^* \T^j  i_{\tau_1(\cdot)}](u)-\sum_{i=1}^k [i_{\tau_2(\cdot)}^* \U_i i_{\tau_2(\cdot)}](u)\\
        & \qquad + \sum_{j=1}^m [i_{\tau_2(\cdot)}^* \U^j  i_{\tau_2(\cdot)}](u).
    \end{align*}

    Let $\tilde{\eta} = i_{\tau_1} ^* \eta i_{\tau_1} \in [i_{\tau_1}^* \tilde{f}_0 i_{\tau_1} ] (u)$ and $\tilde{\zeta} = i_{\tau_2} ^* \zeta i_{\tau_2} \in [i_{\tau_2}^* \tilde{f}_{0\Gamma} i_{\tau_2} ] (u)$, where $\eta \in \tilde{f}_0 (u)$ and $\zeta \in \tilde{f}_{0\Gamma} (u)$.  It follows from \eqref{4-3} that
    \begin{align}\label{4-13}
        \begin{split}
            &|\la \tilde{\eta} , u-u_0\ra|\\
            & \le \l(\| k_\Omega \|_{\tau_1'(\cdot)} + \|\un{\eta} \|_{\tau_1'(\cdot)}  + \|\ov{\eta} \|_{\tau_1'(\cdot)}\r) \l(\| u \|_{\tau_1(\cdot)}  + \| u_0 \|_{\tau_1(\cdot)}\r) \\
            & \le C\l( \| u \|_{\tau_1(\cdot)}  + 1\r)\\
            & \le C\l(\| u \|  + 1\r)
        \end{split}
    \end{align}
    and
    \begin{align}\label{4-13a}
        \begin{split}
            &|\la \tilde{\zeta} , u-u_0\ra|\\
            & \le  \l(\| k_\Gamma \|_{\tau_2'(\cdot),\Gamma} + \|\un{\zeta} \|_{\tau_2'(\cdot),\Gamma}  + \|\ov{\zeta} \|_{\tau_2'(\cdot),\Gamma}\r) \l(\| u \|_{\tau_2(\cdot),\Gamma} + \| u_0 \|_{\tau_2(\cdot),\Gamma}\r) \\
            & \le C\l( \| u \|_{\tau_2(\cdot),\Gamma}  + 1\r)\\
            & \le C\l( \| u \|  + 1\r).
        \end{split}
    \end{align}
    From \eqref{4-5} and \eqref{4-6}, by applying H\"older's and Young's inequalities with $\ep$ for variable exponents (see e.g.\,\cite{le:ssm09}), we get\phantom{\ref{4-13a}}
    \begin{align}\label{4-14}
        \begin{split}
            &\l\langle [i_\q^* \B i_\q] (u) , u-u_0\r\rangle\\
            & \geq  a_2 \int_\Omega |u|^{q(x)} \,\diff x  -\int_\Omega \l(a_1 + C |u|^{q(x) - 1}\r) |u_0| \,\diff x - C \\
            & \geq  \frac{a_2}{2} \int_\Omega |u|^{q(x)} \,\diff x - C.
        \end{split}
    \end{align}
    On the other hand, we have for any $i\in \{1,\dots , k\}$,
    \begin{align*}
        &\l|\l\langle [i_{\tau_1(\cdot)}^* \T_i i_{\tau_1(\cdot)} ](u) , u-u_0\r\rangle \r|\\
         & =  \left|\int_\Omega T_i (x,u) (u-u_0) \,\diff x \right|\\
         &\leq  \l\| \un{\eta}_i -\un{\eta} \r\|_{\tau_1'(\cdot)} \l(\|u \|_{\tau_1(\cdot)} + \|u_0 \|_{\tau_1(\cdot)}\r).
    \end{align*}
    Hence,
    \begin{equation}
        \label{4-15}
        \sum_{i=1}^k \l|\l\langle [i_{\tau_1(\cdot)} ^* \T_i i_{\tau_1(\cdot)} ](u) , u-u_0\r\rangle \r|
        \leq C\l(\|u \|_{\tau_1(\cdot)} +1\r) \leq C(\|u \| +1).
    \end{equation}
    Similarly,\phantom{\ref{4-15}}
    \begin{equation}
        \label{4-16}
        \sum_{j=1}^m \l|\l\langle [i_{\tau_1(\cdot)}^* \T^j  i_{\tau_1(\cdot)} ](u) , u-u_0\r\rangle \r|
        \leq C\l(\|u \|_{\tau_1(\cdot)} +1\r)
        \leq C(\|u \| +1),
    \end{equation}
    and\phantom{\ref{4-16}}
    \begin{align}\label{4-17}
        \begin{split}
            &\sum_{i=1}^k \l|\l\langle [i_{\tau_2(\cdot)} ^* \U_i i_{\tau_2(\cdot)} ](u) , u-u_0\r\rangle \r|, \  \sum_{j=1}^m \l|\l\langle [i_{\tau_2(\cdot)} ^* \U^j  i_{\tau_2(\cdot)} ](u) , u-u_0\r\rangle \r|\\
            &\leq C\l(\|u \|_{\tau_2(\cdot),\Gamma} +1\r) \leq C(\|u \| +1).
        \end{split}
    \end{align}
    Lastly, since $A$ has as a potential functional the following convex functional \phantom{\ref{4-17}}
    \begin{align*}
        I(u)=\int_{\Omega}\left[\frac{|\nabla u|^{p(x)}}{p(x)}+\mu(x) \frac{|\nabla u|^{q(x)}}{q(x)}\right] \,\diff x,
    \end{align*}
    we see that
    \begin{equation}
        \label{4-18}
        \langle A u , u- u_0 \rangle \geq I(u) - I(u_0) = I(u) - C.
    \end{equation}
    On the other hand, it follows from (H0) that\phantom{\ref{4-18}}
    \begin{align*}
        \int_\Omega |u|^{q(x)} \,\diff x \geq \int_\Omega |u|^{p(x)} \,\diff x - |\Omega|,
    \end{align*}
    where $|\Omega|$ is the Lebesgue measure of $\Omega$.  Hence, it follows from \eqref{4-14} that there is $a_3 >0$ such that for all $u\in \WH$
    \begin{align}
        \label{4-18a}
        \begin{split}
            \l\langle i_{\q}^* \B i_\q (u) , u - u_0\r\rangle
            & \geq a_3 \int_\Omega \l[|u|^{p(x)} + \mu(x)|u|^{q(x)}\r]\,\diff x - C \\
            & \ge a_3 \int_\Omega \left[\frac{|u|^{p(x)}}{p(x)} + \mu(x)\frac{|u|^{q(x)}}{q(x)}\right]\,\diff x - C.
        \end{split}
    \end{align}

    Combining the estimates from \eqref{4-13} to \eqref{4-18a}, we see that for any $u\in  K$, $\tilde{\eta}\in [i_{\tau_1(\cdot)} ^* \tilde{f}_0 i_{\tau_1(\cdot)}](u)$ and $\tilde{\zeta}\in [i_{\tau_2(\cdot)} ^* \tilde{f}_{0\Gamma} i_{\tau_2(\cdot)}](u)$
    \begin{align}
        \label{4-18b}
        \begin{split}
            &\bigg\la Au+ \tilde{\eta} +\tilde{\zeta} +  [i_\q^* \B i_\q](u)  -\sum_{i=1}^k [i_{\tau_1(\cdot)}^* \T_i i_{\tau_1(\cdot)}](u) \\
            &\quad + \sum_{j=1}^m [i_{\tau_1(\cdot)}^* \T^j  i_{\tau_1(\cdot)}](u) \ds -\sum_{i=1}^k [i_{\tau_2(\cdot)}^* \U_i i_{\tau_2(\cdot)}](u)\\
            &\quad +\sum_{j=1}^m [i_{\tau_2(\cdot)}^* \U^j  i_{\tau_2(\cdot)}](u) , u-u_0 \bigg\ra \\
            &\ge  \ds \min\{1, a_3\}\int_\Om \left[\frac{|\nabla u|^{p(x)}}{p(x)}+\mu(x) \frac{|\nabla u|^{q(x)}}{q(x)}\r.\\
            &\l.\qquad\qquad \qquad\qquad+ \frac{|u|^{p(x)}}{p(x)} + \mu(x)\frac{|u|^{q(x)}}{q(x)}\right] \,\diff x
            - C (\|u \|+1).
        \end{split}
    \end{align}
    Since
    \begin{align*}
        \lim_{\|u \|_{1,\mathcal{H}}\to \infty}\frac{1}{\|u \|_{1,\mathcal{H}}}&\int_\Om \left[\frac{|\nabla u|^{p(x)}}{p(x)}+\mu(x) \frac{|\nabla u|^{q(x)}}{q(x)} \r.\\
        &\l. \qquad\qquad+ \frac{|u|^{p(x)}}{p(x)} + \mu(x)\frac{|u|^{q(x)}}{q(x)}\right] \,\diff x = \infty,
    \end{align*}
    see Proposition 3.5 in \cite{CGHW}, the estimate in \eqref{4-18b} implies \eqref{4-12}. It follows from Corollary 2.3 in \cite{le:ret11} that there exist $u,\eta$, and $\zeta$ that satisfy \eqref{4-7} and \eqref{4-8}.

    In the next step, we show that
    \begin{equation}
        \label{4-19}
        \un{u}_s \le u \le \ov{u}_r \quad\text{a.\,e.\,in } \Omega,
    \end{equation}
    for all $s\in \{1,\dots , k\}$ and for all $r\in \{1,\dots , m\}$.  In fact, let $s\in \{1,\dots , k\}$.  By putting $v= \un{u}_s \vee u = u +  (\un{u}_s -u)^+ \in K$ into \eqref{4-8}, we obtain
    \begin{align}\label{4-20}
        \begin{split}
            &\l\la Au , (\un{u}_s -u)^+ \r\ra +\int_\Om \eta (\un{u}_s -u)^+  \,\diff x + \int_\Gamma \zeta (\un{u}_s -u)^+  \,\diff \sigma\\
            & + \int_\Om b(x,u) (\un{u}_s -u)^+ \,\diff x - \sum_{i=1}^k \int_\Om T_i (x,u) (\un{u}_s -u)^+  \,\diff x\\
            & + \sum_{j=1}^m \int_\Om T^j (x,u) (\un{u}_s -u)^+  \,\diff x - \sum_{i=1}^k \int_\Gamma U_i (x,u) (\un{u}_s -u)^+  \,\diff \sigma\\
            &+ \sum_{j=1}^m \int_\Gamma U^j (x,u) (\un{u}_s -u)^+  \,\diff \sigma \ge 0.
        \end{split}
    \end{align}
    Since $\un{u}_s$ is a subsolution of \eqref{problem}, we have from its definition that there are $\un{\eta}_s\in L^{\tau_1'(\cdot)} (\Om)$ and $\un{\zeta}_s\in L^{\tau_2'(\cdot)} (\Gamma)$ satisfying conditions (i)-(iii) in Definition \ref{D102} with $\un{u}$, $\un{\eta}$, $\un{\zeta}$ replaced by $\un{u}_s$, $\un{\eta}_s$, $\un{\zeta}_s$.

    Letting $v = \un{u}_s - (\un{u}_s - u)^+ = \un{u}_s \wedge u \in \un{u}_s \wedge K$ in Definition \ref{D102} (iii) (with $\un{u}_s$, $\un{\eta}_s$, and $\un{\zeta}_s$) yields
    \begin{equation}
        \label{4-21}
        - \l\la A \un{u}_s ,  (\un{u}_s - u)^+\r\ra -\int_\Om \un{\eta}_s  (\un{u}_s - u)^+ \,\diff x  -\int_\Gamma \un{\zeta}_s  (\un{u}_s - u)^+ \,\diff \sigma  \ge 0 .
    \end{equation}
    Adding \eqref{4-20} and \eqref{4-21} gives us
    \begin{align*}
         & \l\langle Au - A \un{u}_s ,  (\un{u}_s - u)^+\r\rangle + \int_\Om (\eta - \un{\eta}_s)  (\un{u}_s - u)^+ \,\diff x\\
         &+ \int_\Gamma (\zeta - \un{\zeta}_s)  (\un{u}_s - u)^+ \,\diff \sigma + \int_\Om b(x,u) (\un{u}_s -u)^+ \,\diff x\\
         &- \sum_{i=1}^k \int_\Om T_i (x,u) (\un{u}_s -u)^+  \,\diff x
         + \sum_{j=1}^m \int_\Om T^j (x,u) (\un{u}_s -u)^+  \,\diff x\\
         &- \sum_{i=1}^k \int_\Gamma U_i (x,u) (\un{u}_s -u)^+  \,\diff \sigma+ \sum_{j=1}^m \int_\Gamma U^j (x,u) (\un{u}_s -u)^+  \,\diff \sigma \ge 0.
    \end{align*}
    First, note that
    \begin{align*}
        &\l\langle Au - A \un{u}_s  ,  (\un{u}_s - u)^+\r\rangle\\
         & = \int_{\{x\in \Omega \,:\, \un{u}_s (x) \ge u(x)\}} \l[\l(|\nabla {u}|^{p(x)-2} \nabla {u}+ \mu(x) |\nabla {u}|^{q(x)-2} \nabla {u}\r)\r.                                                       \\
         & \qquad\l. -\l(|\nabla \underline{u}_s|^{p(x)-2} \nabla \underline{u}_s+ \mu(x) |\nabla \underline{u}_s|^{q(x)-2} \nabla \underline{u}_s\r)\r] \cdot \nabla (\underline{u}_s -u)\, \diff x\le 0.
    \end{align*}
    At $x\in \Om$ such that $\un{u}_s > u(x)$, since $\un{u}_s (x) \le \un{u}(x) \le \ov{u}(x)$, we have
    \begin{align*}
        \int_\Om T^j (x,u)(\un{u}_s -u)^+ \,\diff x
        = \int_{\{x\in\Omega \,:\, \un{u}_s(x) > u(x)\}} T^j (x,u)(\un{u}_s -u) \,\diff x = 0,
    \end{align*}
    for all $j\in \{1,\dots , m\}$.  Furthermore,  $\eta(x) \in \{\un{\eta}(x)\}$, i.e., $\eta(x) = \un{\eta}(x) $. Also, for such $x$, we have $ T_s(x,u(x)) = | \un{\eta}_s(x) - \un{\eta}(x) | $ and
    \begin{align*}
        \int_\Omega T_i(x,u) (\un{u}_s -u)^+ \,\diff x \ge 0\quad \text{for all }i\in \{1,\dots , k\}.
    \end{align*}
    Therefore,
    \begin{align*}
         & \ds \int_\Om (\eta -\un{\eta}_s)(\un{u}_s - u)^+ \,\diff x - \sum_{i=1}^k \int_\Om T_i(x,u)(\un{u}_s - u)^+ \,\diff x                                              \\
         & \le  \ds \int_\Om (\eta -\un{\eta}_s)(\un{u}_s - u)^+ \,\diff x - \int_\Om T_s (x,u)(\un{u}_s - u)^+ \,\diff x                                                     \\
         & =  \ds \int_{\{x\in\Omega \,:\, \un{u}_s(x) > u(x)\}} \l( (\un{\eta} (x) - \un{\eta}_s (x) ) - |\un{\eta} (x) - \un{\eta}_s (x)|\r)[\un{u}_s (x) - u(x)] \,\diff x \\
         & \le  0.
    \end{align*}
    Similarly, we have
    \begin{align*}
        \int_\Gamma U^j (x,u)(\un{u}_s -u)^+ \,\diff \sigma & =  0  \quad \text{for all }j\in \{1,\dots , m\},   \\
        \int_\Gamma U_i(x,u) (\un{u}_s -u)^+ \,\diff \sigma & \ge 0 \quad \text{for all }i\in \{1,\dots , k\} ,
    \end{align*}
    and
    \begin{align*}
         & \ds \int_\Gamma (\zeta -\un{\zeta}_s)(\un{u}_s - u)^+ \,\diff \sigma - \sum_{i=1}^k \int_\Gamma U_i(x,u)(\un{u}_s - u)^+ \,\diff \sigma                                    \\
         & \le  \ds \int_\Gamma (\zeta -\un{\zeta}_s)(\un{u}_s - u)^+ \,\diff \sigma - \int_\Gamma U_s (x,u)(\un{u}_s - u)^+ \,\diff \sigma                                           \\
         & =  \ds \int_{\{x\in\Gamma\,:\, \un{u}_s(x) > u(x)\}} \l( (\un{\zeta} (x) - \un{\zeta}_s (x) ) - |\un{\zeta} (x) - \un{\zeta}_s (x)|\r)[\un{u}_s (x) - u(x)] \,\diff \sigma \\
         & \le  0.
    \end{align*}

    Combining the above inequalities, we obtain
    \begin{align*}
        0 \le \int_\Om b(x,u) (\un{u}_s - u)^+ \,\diff x  = \int_{\{x\in\Om \,:\, \un{u}_s(x) > u(x)\}}  b(x,u) (\un{u}_s - u) \,\diff x.
    \end{align*}
    From \eqref{4-4}, if $\un{u}_s(x) > u(x)$ then $\un{u} > u(x)$ and $b(x,u(x)) = - [\un{u}(x) - u(x)]^{q(x)-1}$.  Hence,
    \begin{align*}
        0 \le  -  \int_{\{x\in\Omega \,:\, \un{u}_s(x) > u(x)\}}  (\un{u}(x) - u(x))^{q(x)-1}  [\un{u}_s (x) - u(x)] \,\diff x.
    \end{align*}
    Since $\un{u}(x) - u(x) > 0$ and $\un{u}_s(x) - u(x) > 0$ on the set $\{x\in\Om : \un{u}_s(x) > u(x)\}$, this inequality implies that this set has measure $0$, which means that $u(x) \geq \un{u}_s (x)$ for a.\,a.\,$x\in \Omega$. The second inequality in \eqref{4-19} is demonstrated in the same way.

    As a consequence of \eqref{4-19}, we see that $\un{u} \leq u \leq \ov{u}$ a.\,e.\,in $\Omega$ and thus their traces on $\Gamma$ also satisfy $\un{u} \leq u \leq \ov{u}$ a.\,e.\,on $\Gamma$. This implies that $b(\cdot , u) = T_i (\cdot , u) = T^j(\cdot , u) = 0$ a.\,e.\,in $\Omega$, $U_i (\cdot , u) = U^j(\cdot , u) = 0$ a.\,e.\,on $\Gamma$ for all $i\in \{ 1, \dots , k\}$, for all $j\in \{ 1, \dots , m\}$ and also $f_0(x,u(x)) = f(x,u(x))$ for a.\,a.\,$x\in \Om$ and $f_{0\Gamma}(x,u(x)) = f_\Gamma(x,u(x))$ for a.\,a.\,$x\in \Gamma$.  This shows that $u$ is a solution of \eqref{problem} which completes the proof of Theorem \ref{T4-1}.
\end{proof}

The proof of  Theorem \ref{T102} is now an immediate  consequence of Theorem \ref{T4-1}.

\begin{proof}[Proof of Theorem \ref{T102}]
    In the particular case where $m=n=1$, condition \eqref{g1} becomes condition (F3) and Theorem \ref{T4-1} reduces to Theorem \ref{T102}.
\end{proof}

\section{Extremal Solutions: Proof of Theorem \ref{T103}}\label{section_5}

In this section we give the proof of Theorem \ref{T103}.

\begin{proof}[Proof of Theorem \ref{T103}]
    (i) Since $\un{u}, \ov{u}\in \WH$, it follows  that the set $\{ \| u \|_{\mathcal{H}} \,:\, u\in \S\}$ is bounded.  Let $\{ u_n\}_{n\in\N}$ be a sequence in $\S$ and $\{ \eta_n\}_{n\in\N} \subset L^{\tau_1'(\cdot)}(\Om)$, $\{ \zeta_n\}_{n\in\N}\subset L^{\tau_2'(\cdot)}(\Gamma)$ be  corresponding sequences that satisfy \eqref{problem1} (for each $u=u_n$ and $\eta = \eta_n, \zeta = \zeta_n$).

    From (F3), $\{ \eta_n\}_{n\in\N}$ is a bounded sequence in $L^{\tau_1'(\cdot)}(\Om)$ and $\{ \zeta_n\}_{n\in\N}$ is a bounded sequence in $L^{\tau_2'(\cdot)}(\Gamma)$.  Using \eqref{problem1} with $u_n$, $\eta_n$, $\zeta_n$, and $v=v_0$, a fixed element of $K$, we see that $\{ \rho_\Hi(|\nabla u_n|)\}_{n\in\N}$ is a bounded sequence and thus the set $\{ \| \nabla  u_n \|_{\mathcal{H}} \,:\, n\in \N\}$ is also bounded.  Hence, $\{ u_n\}_{n\in\N}$ is a bounded sequence in $\WH$ and there exists a subsequence $\{ u_{n_l}\}_{l\in\N}\subset \{ u_n\}_{n\in\N}$ such that $u_{n_l} \rh u_0$ in $\WH$ for some $u_0\in K$ (note that $K$ is weakly closed in $\WH$).  Thus, $u_{n_l} \to u_0$ in $L^\Hi(\Om)$ and in $L^{\tau_1(\cdot)}(\Om)$, and $u_{n_l}|_{\Gamma} \to u_0|_{\Gamma}$ in  $L^{\tau_2(\cdot)}(\Gamma)$.

    By passing to  a subsequence if necessary, we can also assume that $u_{n_l} \to u_0$ a.\,e.\,in $\Om$ and $u_{n_l}|_\Gamma \to u_0|_\Gamma$ a.\,e.\,on $\Gamma$. Because of the boundedness of $\{\eta_n\}_{n\in\N}$ in $L^{\tau_1'(\cdot)}(\Om)$ and of $\{\zeta_n\}_{n\in\N}$ in $L^{\tau_2'(\cdot)}(\Gamma)$, $\eta_{n_l} \rh \eta_0$ in $L^{\tau_1'(\cdot)}(\Om)$ for some $\eta_0\in L^{\tau_1'(\cdot)}(\Om)$ and $\zeta_{n_l} \rh \zeta_0$ in $L^{\tau_2'(\cdot)}(\Gamma)$ for some $\zeta_0\in L^{\tau_2'(\cdot)}(\Gamma)$.
    Due to the compactness of $i_{\tau_1(\cdot)}$ and $i_{\tau_2(\cdot)}$, and thus of $i_{\tau_1(\cdot)}^*$ and $i_{\tau_2(\cdot)}^*$, we have $i_{\tau_1(\cdot)}^* \eta_{n_l} \to i_{\tau_1(\cdot)}^* \eta_{0}$  and $i_{\tau_2(\cdot)}^* \zeta_{n_l} \to i_{\tau_2(\cdot)}^* \zeta_{0}$  in $\WH^*$. Therefore
    \begin{equation}
        \label{5-1}
        \int_\Om \eta_{n_l} (u_{n_l}-u_0) \,\diff x \to 0
        \quad\text{and}\quad
        \int_\Gamma \zeta_{n_l} (u_{n_l}-u_0) \,\diff \sigma \to 0 \quad \text{as } l\to\infty.
    \end{equation}
    From \eqref{problem1} with $u = u_{n_l}$ and $v= u_0$, we see that
    \begin{align*}
        \liminf_{l\to\infty}\, \int_\Om \l(|\nabla u_{n_l}|^{p(x)-2} \nabla u_{n_l}+ \mu(x) |\nabla u_{n_l}|^{q(x)-2} \nabla u_{n_l}\r)\cdot \nabla (u_{n_l}-u_0)\,\diff x \le 0.
    \end{align*}
    We obtain $u_{n_l}\to u_0$  in $\WH$ due to the $(\Ss_+)$-property of the operator $A\colon$ $\WH$ $\to \WH^*$, see Proposition \ref{prop1}.

    Next, let us prove that $u_0\in \S$.  It is evident that
    \begin{equation}
        \label{5-2}
        \un{u}\leq u_0 \leq \ov{u} \quad \text{a.\,e.\,in } \Omega.
    \end{equation}
    Let $f_0$ and $f_{0\Gamma}$ be defined by \eqref{truncated1} and \eqref{truncated2} in the proof of Theorem \ref{T4-1}.  Since $\un{u} \leq u_n \leq \ov{u}$ a.\,e.\,in $\Omega$, we see that $u_n$ and $\eta_n$, $\zeta_n$ satisfy \eqref{problem1} with $f_0$ and $f_{0\Gamma}$ instead of $f$ and $f_{\Gamma}$.  From \eqref{5-1} and the fact that $i_{\tau_1(\cdot)}^* \tilde{f}_0 i_{\tau_1(\cdot)}$ and  $i_{\tau_2(\cdot)}^* \tilde{f}_{0\Gamma} i_{\tau_2(\cdot)}$ are generalized pseudomonotone from $\WH$ to $\WH^*$ (cf.\,Proposition \ref{p35}), we have $\eta_0 \in [i_{\tau_1(\cdot)}^* \tilde{f}_0 i_{\tau_1(\cdot)}](u_0)$ and $\zeta_0 \in [i_{\tau_2(\cdot)}^* \tilde{f}_{0\Gamma} i_{\tau_2(\cdot)}](u_0)$, that is,
    \begin{align*}
        \eta_0(x)  & \in f_0(x,u_0(x)) = f(x,u_0(x)) \quad \text{for a.\,a.\,} x\in \Omega,                 \\
        \zeta_0(x) & \in f_{0\Gamma}(x,u_0(x)) = f_\Gamma(x,u_0(x)) \quad \text{for a.\,a.\,} x\in \Gamma,
    \end{align*}
    and
    \begin{align*}
        \int_\Om \eta_{n_l} u_{n_l} \,\diff x \to \int_\Om \eta_0 u_0 \,\diff x,
        \quad
        \int_\Gamma \zeta_{n_l} u_{n_l} \,\diff \sigma \to \int_\Gamma \zeta_0 u_0 \,\diff \sigma.
    \end{align*}
    Therefore, for all $v\in K$,
    \begin{align*}
         & \ds \int_\Om \l(|\nabla u_{n_l}|^{p(x)-2} \nabla u_{n_l}+ \mu(x) |\nabla u_{n_l}|^{q(x)-2} \nabla u_{n_l}\r) \cdot \nabla (v-u_{n_l})\,\diff x\\
         &+ \int_\Om \eta_{n_l} (v-u_{n_l})\, \diff x + \int_\Gamma \zeta_{n_l} (v-u_{n_l})\, \diff \sigma                                                                                                                                                                                  \\
         & \to \int_\Omega \l( |\nabla u_{0}|^{p(x)-2} \nabla u_{0}+ \mu(x) |\nabla u_{0}|^{q(x)-2} \nabla u_{0}\r) \cdot \nabla (v-u_{0})\,\diff x\\
         & + \int_\Om \eta_{0} (v-u_{0})\,\diff x + \int_\Gamma \zeta_{0} (v-u_{0})\, \diff \sigma.
    \end{align*}
    Since $u_{n_l} \in \S$, this limit  shows that $u_0$ and $\eta_0$, $\zeta_0$ satisfy \eqref{problem1} which in view of \eqref{5-2} implies that $u_0\in \S$.  We thus obtain the compactness of $\S$ in $\WH$.

    (ii) The proofs for (ii) and (iii) follow the same lines as those for the case of regular Sobolev spaces, thus their outlines are presented here for the sake of completeness.  Assuming (\ref{lattice}), we see that  if $u_0\in \S$ then $u_0\wedge K\subset K$ and thus $u_0$ is  a subsolution of \eqref{problem} in the sense of Definition \ref{D102}.  If $u_1, u_2\in \S$ then they are subsolutions of \eqref{problem} and Theorem \ref{T4-1} thus implies the existence of a solution $u$ of \eqref{problem} such that $\max\{ u_1,u_2\}\le u\le \min \{ \ov{u}_j \,:\, 1\le j \le m\} = \ov{u}$.  It is clear that $u\in \S$.  In fact, since $u_1, u_2 \in \S$, this follows directly from the definition of $\S$ and  the inequalities
    \begin{align*}
    	\underline{u} \leq u_1 \leq \max \{ u_1, u_2 \} \leq u \leq \overline{u}.
    \end{align*}

    (iii)  Since $\WH$ is separable (with the norm topology), so is $\S$.  Let $\{w_n\}_{n\in\N}$ be a dense sequence in $\S$.  Using the directedness of $\S$, we can construct inductively a sequence $\{ u_n\}_{n\in\N}$ in $\S$ such that $w_n \leq u_n \leq u_{n+1}$ for all $n\in \N$.  Let
    \begin{align*}
        u^*(x) = \sup\{ u_n(x) \,:\, n\in \N\} = \lim_{n\to\infty} u_n(x) \quad \text{for }x\in \Om.
    \end{align*}
    As a consequence of the compactness of $\S$, $u_n \to u^*$ in $\WH$ and $u^* \in \S$.  Since $u^* \geq w_n$ a.\,e.\,in $\Om$ for all $n\in \N$, from the density of $\{w_n\}_{n\in\N}$ in $\S$, we see that $u^* \geq u$ a.\,e.\,in $\Omega$ for all $u\in \S$.  The existence of the smallest element $u_*$ of $\S$ is proved analogously.
\end{proof}

\section{Application: Generalized Variational-Hemivariational Inequalities}\label{section_6}

In this section we are dealing with the generalized variational-hemi\-variat\-ional inequality \eqref{106} which is of the form
\begin{align}\label{601}
    \begin{split}
        u\in K\,:\, &\langle  Au, v-u\rangle+\int_{\Omega} j^\circ(\cdot,u,u; v-u)\,\diff x\\ &+\int_{\Gamma} j_{\Gamma}^\circ(\cdot, u, u;  v- u)\,\diff\sigma\ge 0\quad \text{for all }v\in K,
    \end{split}
\end{align}
where $A$ is the variable exponent double-phase operator given by \eqref{103}. The functions $j$, $j_{\Gamma}$ given by
\begin{align*}
    j          & \colon \Omega\times\R\times\R\to \R \quad \text{with}\quad (x,r,s)\mapsto j(x,r,s),            \\
    j_{\Gamma} & \colon \Gamma\times\R\times\R\to \R  \quad \text{with}\quad (x,r,s)\mapsto j_{\Gamma}(x,r,s),
\end{align*}
are supposed to be locally Lipschitz with respect to $s$, and $j^\circ(x,r,s;\varrho)$ and  $j_{\Gamma}^\circ(x,r,s;\varrho)$ denote Clarke's generalized directional derivatives at $s$ in the direction $\varrho$ for fixed $(x,r)$. In case $j$ and  $j_{\Gamma}$ are independent of $r$, (\ref{601}) represents a variational-hemivariational inequality. However, in the general case of problem (\ref{601}) the functions $s\mapsto  j(x,s,s)$ and $s\mapsto  j_{\Gamma}(x,s,s)$ may be not locally Lipschitz but only partially locally Lipschitz. This enlarges the class of variational-hemivariational inequalities considerably, and therefore we are calling them generalized variational-hemivariational  inequalities. Under hypotheses specified next we are going to show that problem (\ref{601}) is equivalent to some subclass of multi-valued variational inequalities of the form \eqref{problem}, which in a sense fills a gap in the literature where both problems are considered independently and separately.

We suppose the following hypotheses on $j$ and $j_{\Gamma}$:
\begin{enumerate}
    \item[\textnormal{(J1)}]
        The functions $x\mapsto j(x,r,s)$ and $x\mapsto j_{\Gamma}(x,r,s)$ are measurable in $\Omega$ and on $\Gamma$, respectively, for all $r,s\in \mathbb{R}$. The functions $r\mapsto j(x,r,s)$ and $r\mapsto j_{\Gamma}(x,r,s)$ are continuous for a.\,a.\,$x\in \Omega$ and $x\in \Gamma$, respectively, and for all $s\in \mathbb{R}$. The functions $s\mapsto j(x,r,s)$ and $s\mapsto j_{\Gamma}(x,r,s)$ are locally Lipschitz for a.\,a.\,$x\in \Omega$ and $x\in \Gamma$, respectively, and for all $r\in \mathbb{R}$.
    \item[\textnormal{(J2)}]
        Let $s\mapsto \partial j(x,r,s)$ and   $s\mapsto \partial j_{\Gamma}(x,r,s)$ denote Clarke's generalized gradient of the functions $j$ and $j_{\Gamma}$ with respect to the variable $s$, respectively. Assume the following growth conditions for $s\to \partial j(x,s,s)$ and $s\to  \partial j_{\Gamma}(x,s,s)$:
        \begin{enumerate}
            \item[]
                There exist $r_1\in C(\close)$, $r_2 \in C(\Gamma)$ with $1<r_1(x)<p^*(x)$ for all $x\in \close$, $1<r_2(x)<p_*(x)$ for all $x\in\Gamma$, $\beta\geq 0$, $\beta_\Gamma\geq 0$ and functions $\alpha\in\Lp{r_1'(\cdot)}$, $\alpha_\Gamma \in L^{r_2'(\cdot)}(\Gamma)$  such that
                \begin{align*}
                    \sup \l\{|\eta| \,:\, \eta \in \partial j(x,s,s)\r\} \leq \alpha(x)+\beta |s|^{r_1(x)-1}
                \end{align*}
                for a.\,a.\,$x\in\Omega$, for all $s\in\R$, and
                \begin{align*}
                    \sup \l\{|\zeta| \,:\, \zeta \in \partial j_{\Gamma}(x,s,s)\r\} \leq \alpha_\Gamma(x)+\beta_\Gamma |s|^{r_2(x)-1}
                \end{align*}
                for a.\,a.\,$x\in\Gamma$, and for all $s\in\R$.
        \end{enumerate}
    \item[\textnormal{(J3)}]
        Let $s\mapsto j^\circ(x,r,s;\varrho)$ and   $s\mapsto j^\circ_{\Gamma}(x,r,s;\varrho)$ denote Clarke's generalized directional derivative of the functions $s\mapsto  j(x,r,s)$ and   $s\mapsto  j_{\Gamma}(x,r,s)$ at $s$, respectively, in the direction $\varrho$ for fixed $(x,r)$. Suppose that $s\mapsto  j^\circ(x,s,s;\varrho)$ and   $s\mapsto j^\circ_{\Gamma}(x,s,s;\varrho)$ are upper semicontinuous for a.\,a.\,$x\in \Omega$, and $x\in \Gamma$, respectively, and for all $\varrho\in \mathbb{R}$.
\end{enumerate}

Let us define the multi-valued functions $f\colon \Omega\times\R\to 2^{\R}$ and $f_{\Gamma}\colon \Gamma\times\R\to 2^{\R}$ as follows:
\begin{equation}
    \label{604}
    f(x,s)=\partial j(x,s,s),\quad f_{\Gamma}(x,s)=\partial j_{\Gamma}(x,s,s).
\end{equation}

For the so defined multi-valued functions the following lemma holds true.

\begin{proposition}
    \label{L601}
    Under the assumptions \textnormal{(J1)--(J3)}, the multi-valued functions $f\colon \Omega\times\R\to 2^{\R}$ and $f_{\Gamma}\colon \Gamma\times\R\to 2^{\R}$ defined by \eqref{604} satisfy hypotheses \textnormal{(F1)--(F2)}.
\end{proposition}

\begin{proof}
    Hypothesis (F2) follows immediately from (J2). The proof of property (F1) is just a slight adaption of the proof of \cite[Lemma 3.2]{carl:mvi21}, and therefore can be omitted.
\end{proof}

With the multi-valued functions  $f$ and $f_{\Gamma}$ given by \eqref{604}, respectively, we consider the following associated multi-valued variational inequality: Find $u\in K\subset V_{\Gamma_0}\subset \WH$,  such that there exist $\tau_1\in C(\close)$, $\tau_2\in C(\Gamma)$, $1<\tau_1(x)<p^*(x)$ for all $x\in\close$, $1<\tau_2(x)<p_*(x)$ for all $x\in\Gamma$ and $\eta\in\Lp{\tau'_1(\cdot)}$, $\zeta\in L^{\tau'_2(\cdot)}(\Gamma)$ satisfying  $\eta(x) \in f(x,u(x))$ for a.\,a.\,$x\in\Omega$, $\zeta(x) \in f_\Gamma(x,u(x))$ for a.\,a.\,$x\in\Gamma$ and
\begin{align}\label{605}
    \begin{split}
        & \into \l(|\nabla u|^{p(x)-2} \nabla u+ \mu(x) |\nabla u|^{q(x)-2} \nabla u\r) \cdot \nabla (v-u)\,\diff x\\
        &+\into \eta(v-u) \,\diff x+\int_\Gamma \zeta(v-u)\,\diff \sigma \geq 0
    \end{split}
\end{align}
for all $v\in K$.

We are going to show that the   generalized variational-hemivariational inequality \eqref{601} and the related multi-valued variational inequality \eqref{605} are indeed equivalent provided $K$ satisfies the following lattice property
\begin{equation}
    \label{606}
    K\wedge K\subset K
    \quad\text{and}\quad
    K\vee K\subset K.
\end{equation}

The following equivalence result holds:

\begin{theorem}
    \label{T601}
    Assume hypotheses \textnormal{(H0)} and \textnormal{(J1)--(J3)}, and let the lattice condition \eqref{606} for $K$ be satisfied. Then $u$ is a solution of the generalized variational-hemivariational inequality \eqref{601} if and only if $u$ is a solution of the multi-valued variational inequality \eqref{605} with multi-functions $f$ and $f_{\Gamma}$ given by \eqref{604}.
\end{theorem}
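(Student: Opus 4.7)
The equivalence splits into two implications, the direction \eqref{605}$\Rightarrow$\eqref{601} being immediate and the reverse direction relying on a convex-analysis argument at the origin.

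For \eqref{605}$\Rightarrow$\eqref{601}, suppose $u$, $\eta$, $\zeta$ realize \eqref{605}. By Proposition~\ref{P1}(iv), $\xi\rho\le j^\circ(x,r,s;\rho)$ for every $\xi\in\pa j(x,r,s)$ and every $\rho\in\R$, and analogously for $j_\Gamma^\circ$. Applied pointwise with $r=s=u(x)$ and $\rho=v(x)-u(x)$, this gives $\eta(x)(v(x)-u(x))\le j^\circ(x,u,u;v-u)$ a.e.\,in $\Om$ and $\zeta(x)(v(x)-u(x))\le j_\Gamma^\circ(x,u,u;v-u)$ a.e.\,on $\Ga$. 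Integrating and inserting into \eqref{605} produces \eqref{601}; this step uses neither the selection-measurability apparatus nor the lattice condition.

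For the converse \eqref{601}$\Rightarrow$\eqref{605}, fix $\tau_1=r_1$ and $\tau_2=r_2$ from (J2) and introduce on $\WH$ the sublinear integral functionals
\[
J(w) := \into j^\circ(\cdot,u,u;w)\,\diff x,\qquad J_\Gamma(w) := \int_\Gamma j_\Gamma^\circ(\cdot,u,u;w)\,\diff\sigma.
\]
The growth hypothesis (J2), H\"older's inequality for variable exponents, and the embeddings $\WH\hookrightarrow \Lp{r_1(\cdot)}$ and $\WH\hookrightarrow L^{r_2(\cdot)}(\Gamma)$ from Proposition~\ref{proposition_embeddings}(ii)-(iii) ensure that $J$ and $J_\Gamma$ are finite, positively homogeneous, subadditive and Lipschitz on $\WH$. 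Using Proposition~\ref{P1}(iv) in the form $j^\circ(x,u,u;\rho)=\max\{\xi\rho:\xi\in\pa j(x,u(x),u(x))\}$, together with a measurable selection theorem of Aubin--Clarke type, one identifies $J$ with the support function of
\[
\tilde{f}(u):=\l\{\eta\in\Lp{r_1'(\cdot)}:\eta(x)\in\pa j(x,u(x),u(x))\text{ for a.\,a.\,}x\in\Om\r\},
\]
with supremum attained at a measurable selection, and analogously $J_\Gamma$ with the support function of $\tilde{f}_\Ga(u)$. Because $\tilde{f}(u)$ is nonempty, closed, convex and bounded in $\Lp{r_1'(\cdot)}$, embedding via the adjoint $i^*_{r_1(\cdot)}$ yields the convex subdifferential formulas $\pa J(0)=\F(u)$ and, similarly, $\pa J_\Gamma(0)=\F_\Ga(u)$ in $\WH^*$.

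Consider now the proper convex functional
\[
\Psi(w):=\la Au,w\ra + J(w) + J_\Gamma(w) + I_{K-u}(w),\qquad w\in\WH.
\]
Since $u\in K$ and $j^\circ(x,u,u;0)=j_\Gamma^\circ(x,u,u;0)=0$, one has $\Psi(0)=0$; moreover \eqref{601} rewrites as $\Psi(w)\ge 0$ for every $w\in K-u$, while the inequality is trivial for $w\notin K-u$, so $0$ minimizes $\Psi$. The summands $\la Au,\cdot\ra$, $J$ and $J_\Gamma$ are continuous on all of $\WH$, so the Moreau--Rockafellar sum rule applies and yields $0\in\pa\Psi(0)=\{Au\}+\pa J(0)+\pa J_\Ga(0)+\pa I_K(u)$. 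Hence there exist $\eta\in\tilde{f}(u)$, $\zeta\in\tilde{f}_\Ga(u)$ and $\xi^*\in\pa I_K(u)$ with $Au+i^*_{r_1(\cdot)}\eta+i^*_{r_2(\cdot)}\zeta+\xi^*=0$ in $\WH^*$. Pairing this identity with $v-u$ for arbitrary $v\in K$ and invoking the normal-cone inequality $\la\xi^*,v-u\ra\le 0$ gives precisely \eqref{605}.

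The principal technical point is the identification $\pa J(0)=\F(u)$ and its boundary analogue in the variable-exponent Musielak--Orlicz framework; this rests on the measurable selection argument, the variable exponent duality between $\Lp{r_1(\cdot)}$ and $\Lp{r_1'(\cdot)}$, and the compactness of the embedding and trace operators from Proposition~\ref{proposition_embeddings}. The lattice condition \eqref{606} is not consumed in the equivalence argument itself; it is retained in the hypotheses so that, once \eqref{601} has been recast as \eqref{605}, the resulting multi-valued variational inequality falls within the scope of the sub-supersolution and extremality machinery of Theorems~\ref{T102} and~\ref{T103}.
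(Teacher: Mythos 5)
Your proof of the implication \eqref{605}$\Rightarrow$\eqref{601} matches the paper's argument. For the converse, however, you take a genuinely different route. The paper shows that a solution $u$ of \eqref{601} is simultaneously a subsolution and a supersolution of \eqref{605} in the sense of Definitions~\ref{D102} and~\ref{D103} (testing \eqref{601} with $v\in u\wedge K$, respectively $v\in u\vee K$, which is where the lattice condition \eqref{606} is used), and then invokes Theorem~\ref{T102} on the degenerate ordered pair $(u,u)$ to produce a solution $\tilde u\in[u,u]=\{u\}$. Your argument instead recasts \eqref{601} as the statement that $0$ minimizes the proper convex functional $\Psi=\la Au,\cdot\ra+J+J_\Gamma+I_{K-u}$ and applies the Moreau--Rockafellar sum rule, having first identified $\partial J(0)=\mathcal{F}(u)$ and $\partial J_\Gamma(0)=\mathcal{F}_\Gamma(u)$ via the pointwise formula $j^\circ(x,u,u;\rho)=\max\{\xi\rho:\xi\in\partial j(x,u,u)\}$ of Proposition~\ref{P1}(iv) together with a measurable selection argument (for the scalar case the extremal selection is explicit: take the right endpoint of the interval $\partial j(x,u(x),u(x))$ where $w(x)>0$ and the left endpoint where $w(x)<0$). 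Both arguments are correct; yours buys independence from the sub-supersolution machinery of Section~\ref{section_4} and — as you correctly note — does not consume the lattice condition \eqref{606} at all, showing the equivalence holds for arbitrary closed convex $K$. The paper's route, while requiring \eqref{606}, has the stylistic merit of demonstrating that the equivalence falls out of the general comparison framework already established; it is also arguably cheaper in the sense that the identification $\partial J(0)=\mathcal{F}(u)$ (which your proof calls the principal technical point) is sidestepped entirely, with Proposition~\ref{P1}(iv) invoked only pointwise through \eqref{609}--\eqref{612} to produce the extremal selections $\underline{\eta},\underline{\zeta}$ directly from $j^\circ(\cdot,\cdot,\cdot;-1)$.

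One small point you should make fully explicit if writing this up: to conclude $\partial J(0)=\mathcal{F}(u)$ rather than just $\mathcal{F}(u)\subset\partial J(0)$, you need $\mathcal{F}(u)$ to be nonempty, closed and convex in $\WH^*$ (so that it is recovered as the subdifferential at the origin of its own support function); this is precisely the content of the first part of the proof of Proposition~\ref{p35}, which you may cite. With that reference supplied, the argument is complete.
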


\begin{proof}
    Let $u$ be a solution of (\ref{605}), which due to (\ref{604}) means there exist $\eta\in\Lp{\tau'_1(\cdot)}$ and  $\zeta\in L^{\tau'_2(\cdot)}(\Gamma)$ satisfying
    \begin{align*}
        \eta(x)  & \in f(x,u(x))=\partial j(x,u(x),u(x))\quad \text{for a.\,a.\,} x\in\Omega,              \\
        \zeta(x) & \in f_\Gamma(x,u(x))=\partial j_\Gamma(x,u(x),u(x))\quad \text{for a.\,a.\,}x\in\Gamma
    \end{align*}
    and \eqref{605}. By the definition of $\partial j$ and $\partial j_{\Gamma}$ we get for any $v\in K$
    \begin{equation}
        \label{607}
        \begin{aligned}
             & j^\circ(x,u,u;v-u)\ge \eta(x)\, (v-u) \quad \text{a.\,e.\,in } \Omega,                   \\
             & j_{\Gamma}^\circ(x, u, u; v- u)\ge \zeta(x)\, ( v- u) \quad  \text{a.\,e.\,on } \Gamma.
        \end{aligned}
    \end{equation}

    By (J1) and (J2) we can ensure that the left-hand sides of \eqref{607} belong to $L^1(\Omega)$ and $L^1(\Gamma)$, respectively, which in view of \eqref{605} implies \eqref{601}.

    To prove the reverse, let $u$ be a solution of \eqref{601}. In order to show that $u$ is a solution of the multi-valued variational inequality \eqref{605}, we are going to show that $u$ is both a subsolution and a supersolution of the multi-valued variational inequality \eqref{605}, which then by Proposition \ref{L601} and applying Theorem \ref{T102} yields the existence of a solution $\tilde u$ of \eqref{605} satisfying $\tilde u\in [u,u]$ and thus $u=\tilde u$ completing the proof.

    Let us show first that the solution $u$ of \eqref{601} is a subsolution of the multi-valued variational inequality \eqref{605}. Since $K$ has the lattice property \eqref{606}, we can use in \eqref{601}, in particular, $v\in u\wedge K$, i.e., $v=u\wedge \varphi=u-(u-\varphi)^+$ with $\varphi\in K$, which yields
    \begin{align*}
        &\l\langle Au, -(u-\varphi)^+\r\rangle+\int_{\Omega} j^\circ\l(x,u,u;-(u-\varphi)^+\r)\,\diff x\\
        &+\int_{\Gamma} j_{\Gamma}^\circ\l(x, u, u; -(  u- \varphi)^+\r)\,\diff\sigma \ge 0 \quad\text{for all }\varphi\in K.
    \end{align*}
    From Clarke's calculus we have that  $\varrho \mapsto j^\circ(\cdot,r,s; \varrho)$ (resp. $\varrho \mapsto j_{\Gamma}^\circ(\cdot,r,s; \varrho)$) is positively homogeneous (see Proposition \ref{P1} (i)), so the last inequality is  equivalent  to
    \begin{align*}
        &\l\langle Au, -(u-\varphi)^+\r\rangle+\int_{\Omega} j^\circ(x,u,u;-1)(u-\varphi)^+\,\diff x\\
        &+\int_{\Gamma} j_{\Gamma}^\circ(x,  u,  u; -1)(  u-  \varphi)^+\,\diff\sigma\ge 0\quad\text{for all }\varphi\in K.
    \end{align*}
    Using again for any $v\in u\wedge K$ its representation in the form $v=u-(u-\varphi)^+$  with $\varphi\in K,$ the last inequality yields
    \begin{align}\label{608}
        \begin{split}
            &\langle Au, v-u\rangle+\int_{\Omega} -j^\circ(x,u,u;-1)(v-u)\,\diff x\\
            &+\int_{\Gamma} -j_{\Gamma}^\circ(x,  u,  u; -1)(  v-  u) \,\diff\sigma\ge 0\quad\text{for all } v\in u\wedge K.
        \end{split}
    \end{align}
    From Clarke's calculus (see Proposition \ref{P1} (iv)) we get
    \begin{equation}\label{609}
        \begin{aligned}
            &j^\circ(x,u(x),u(x);-1)\\
             & =\max\{-\theta(x)\,:\, \theta(x)\in \partial j(x,u(x),u(x))\}                         \\
             & =-\min\{\theta(x)\,:\, \theta(x)\in \partial j(x,u(x),u(x))\}=:-\underline{\eta}(x),
        \end{aligned}
    \end{equation}
    where
    \begin{equation}\label{610}
        \underline{\eta}(x) \in \partial j(x,u(x),u(x)) \quad \text{for all }x\in\Omega.
    \end{equation}
    Similarly, we get for $j_{\Gamma}^\circ$
    \begin{equation} \label{611}
        \begin{aligned}
            &j_{\Gamma}^\circ(x, u(x), u(x);-1)\\
             & =\max\{-\zeta(x)\,:\, \zeta(x)\in \partial j_{\Gamma}(x,u(x),u(x))\}                           \\
             & =-\min\{\zeta(x)\,:\, \zeta(x)\in \partial j_{\Gamma}(x, u(x),u(x))\}=:-\underline{\zeta}(x),
        \end{aligned}
    \end{equation}
    with \phantom{\ref{611}}
    \begin{equation}
        \label{612}
        \underline{\zeta}(x) \in \partial j_{\Gamma}(x,  u(x),  u(x)) \quad \text{for all }x\in\Gamma.
    \end{equation}

    Since $x\mapsto j^\circ(x,u(x),u(x);-1)$ as well as $x\mapsto j_{\Gamma}^\circ(x,  u(x),  u(x);-1)$ are measurable functions, it follows that $x\mapsto \underline{\eta}(x)$ and $x\mapsto \underline{\zeta}(x)$ are measurable in $\Omega$ and $\Gamma$, respectively, and in view of the growth conditions (J2) on the  Clarke's gradients, we infer $\underline{\eta}\in  \Lp{r_1'(\cdot)}$,  and $\underline{\zeta}\in L^{r_2'(\cdot)}(\Gamma)$. Taking \eqref{609}--\eqref{612} into account, from \eqref{608} we get
    \begin{equation*}
        \langle Au, v-u\rangle+\int_{\Omega} \underline{\eta}(v-u)\,\diff x +\int_{\Gamma}\underline{\zeta}(v- u) \,\diff\sigma\ge 0 \quad\text{for all } v\in u\wedge K,
    \end{equation*}
    which together with (\ref{610}) and (\ref{612}) proves that $u$ is a subsolution of (\ref{605}). By  similar arguments,  one shows that $u$ is also a supersolution of (\ref{605}), which completes the proof.
\end{proof}

\section{Discontinuous Multi-Valued Problems}\label{section_7}

In this section we study discontinuous multi-valued problems. For this purpose, let $j\colon\Omega\times \R\times\R\to\R$ and $j_\Gamma\colon\Gamma\times \R\times\R\to\R$ be given functions such that both are locally Lipschitz continuous with respect to the third argument. We denote by $s\mapsto \partial j(x,r,s)$ and $s\mapsto \partial j_\Gamma(x,r,s)$ Clarke's generalized gradient of $j$ and $j_\Gamma$ with respect to their third variable. Note that we do not suppose any continuity assumptions on $r\mapsto j(x,r,s)
$ and $r\mapsto j_\Gamma (x,r,s)$. This leads to multi-valued functions $f\colon\Omega \times \R\to 2^\R$ and $f_\Gamma\colon\Gamma\times \R\to 2^{\R}$ given by
\begin{align}
    \label{multi-valued-1}
    f(x,s)=\partial (x,s,s)
    \quad\text{and}\quad
    f_\Gamma(x,s)=\partial j_\Gamma(x,s,s).
\end{align}

Based on Proposition \ref{P1} we know that $f\colon \Omega \times \R\to \K(\R)\subset 2^{\R}\setminus\{\emptyset\}$ and $f_\Gamma \colon \Gamma\times\R\to \K(\R)\subset 2^{\R}\setminus\{\emptyset\}$.

The precise problem is stated as follows: Find $u\in K\subset \WH$ and $\tau_1\in C(\close)$, $\tau_2\in C(\Gamma)$, $1<\tau_1(x)<p^*(x)$ for all $x\in\close$, $1<\tau_2(x)<p_*(x)$ for all $x\in\Gamma$ with $\eta\in\Lp{\tau'_1(\cdot)}$, $\zeta\in L^{\tau'_2(\cdot)}(\Gamma)$ such that
\begin{align}
    \label{problem_discontinuous}
    \begin{cases}
        \eta \in \F(u), \ \zeta \in \F_\Gamma(u), \\[1ex]
        \ds \l\langle A(u),v-u\r\rangle
        +\into \eta(v-u) \,\diff x+\int_\Gamma \zeta(v-u)\,\diff \sigma \geq 0\quad\text{for all }v\in K,
    \end{cases}
\end{align}
where $\F$ and $\F_\Gamma$ are the multi-valued Nemytskij operators generated by the multi-valued functions given in \eqref{multi-valued-1}, that is,
\begin{align*}
    \F(u) &=\l\{\eta\colon\Omega\to \R\,:\,\eta \text{ is measurable in }\Omega \text{ and }\right.\\
    &\qquad\qquad \qquad\qquad \left.\eta(x) \in \partial j(x,u(x),u(x)) \text{ for a.\,a.\,}x\in\Omega\r\},             \\
    \F_\Gamma(u) & =\l\{\zeta\colon\Gamma\to \R\,:\,\zeta \text{ is measurable on }\Gamma \text{ and }\right.\\
    &\qquad\qquad \qquad\qquad \left.\zeta(x) \in \partial j_\Gamma (x,u(x),u(x)) \text{ for a.\,a.\,}x\in\Gamma\r\}.
\end{align*}

\begin{remark}
    Note that \eqref{problem_discontinuous} can be equivalently written in the form: $\eta \in \F(u)$, $\zeta \in \F_\Gamma(u)$ and
    \begin{align*}
        \l\langle A(u)+i^*_{\tau_1(\cdot)}\eta+i^*_{\tau_2(\cdot)}\zeta,v-u \r\rangle \geq 0 \quad\text{for all }v\in K.
    \end{align*}
\end{remark}

\begin{definition}
    Let $\Omega\subset\R^N$, $N\geq 1$, be a nonempty measurable set. A function $f\colon \Omega\times \R^m\to \R$, $m\geq 1$, is called superpositionally measurable (or sup-measurable) if the function $x\mapsto f(x,u_1(x),\cdots, u_m(x))$ is measurable in $\Omega$ whenever the component functions $u_i\colon\Omega \to \R$ of $u=(u_1,\ldots,u_m)$ are measurable.
\end{definition}

We suppose the following assumptions on the data.

\begin{enumerate}
    \item[\textnormal{(H1)}]
        Let $j$ and $j_\Gamma$ be superpositionally measurable, that is, if $x\mapsto v(x)$ and $x\mapsto u(x)$ are measurable in $\Omega$, then $x\mapsto j(x,v(x),u(x))$ is measurable in $\Omega$ and if $x\mapsto v(x)$ and $x\mapsto u(x)$ are measurable in $\Gamma$, then $x\mapsto j_\Gamma(x,v(x),u(x))$ is measurable in $\Gamma$.
    \item[\textnormal{(H2)}]
        Let $\underline{u}$ and $\overline{u}$ be sub- and supersolutions of  \eqref{problem_discontinuous} such that $\underline{u}\leq \overline{u}$. There exist $k_\Omega \in L^{\tau_1'(\cdot)}(\Omega)$ and $k_\Gamma\in L^{\tau_2'(\cdot)}(\Gamma)$ with $\tau_1\in C(\close)$, $\tau_2\in C(\Gamma)$, $1<\tau_1(x)<p^*(x)$ for all $x\in\close$, $1<\tau_2(x)<p_*(x)$ for all $x\in\Gamma$ such that
        \begin{align*}
            |\eta|  & \leq k_\Omega \quad\text{for all }\eta\in\partial j(x,r,s), \text{for all }r,s\in [\underline{u}(x),\overline{u}(x)], \text{ for a.\,a.\,}x\in\Omega,           \\
            |\zeta| & \leq k_\Gamma \quad\text{for all }\zeta\in\partial j_\Gamma(x,r,s), \text{for all } r,s\in [\underline{u}(x),\overline{u}(x)], \text{ for a.\,a.\,}x\in\Gamma.
        \end{align*}
    \item[\textnormal{(H3)}]
        The functions $s\mapsto j(x,r,s)$ and $s\mapsto j_\Gamma(x,r,s)$ are locally Lipschitz continuous for all $r\in \R$, for a.\,a.\,$x\in\Omega$ and for a.\,a.\,$x\in\Gamma$, respectively. The functions $r\mapsto j^\circ(r,s;1)$ and $r\mapsto j_\Gamma^\circ(r,s,1)$ are decreasing for all $s\in\R$ for a.\,a.\,$x\in\Omega$ and for a.\,a.\,$x\in\Gamma$, respectively and the functions $r\mapsto j^\circ(r,s;-1)$ and $r\mapsto j_\Gamma^\circ(r,s,-1)$ are increasing for all $s\in\R$ for a.\,a.\,$x\in\Omega$ and for a.\,a.\,$x\in\Gamma$, respectively.
\end{enumerate}

We have the following existence and enclosure result of extremal solutions for  \eqref{problem_discontinuous}.

\begin{theorem}
    \label{theorem-discontinuous}
    Let hypotheses \textnormal{(H0)}, \textnormal{(H1)--(H3)} and the lattice condition
    \begin{align*}
        K\wedge K\subset K\quad \text{and} \quad K\vee K\subset K
    \end{align*}
    be satisfied. Then there exist the greatest solution $u^*$ and the smallest solution $u_*$ of problem \eqref{problem_discontinuous} within the ordered interval $[\underline{u},\overline{u}]$.
\end{theorem}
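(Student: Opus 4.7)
The plan is to reduce the discontinuous problem \eqref{problem_discontinuous} to the upper-semicontinuous framework of Theorems \ref{T102}--\ref{T103} by freezing the first (possibly non-continuous) argument of $j$ and $j_\Gamma$, and then to build the extremal solutions through the fixed-point principle of Theorem \ref{theorem-fixed-point}. For each $z\in [\underline{u},\overline{u}]$ I would study the auxiliary problem $(P_z)$: find $u\in K$ together with $\eta_z\in L^{\tau'_1(\cdot)}(\Omega)$, $\zeta_z\in L^{\tau'_2(\cdot)}(\Gamma)$ satisfying $\eta_z(x)\in \partial j(x,z(x),u(x))$ a.e.\ in $\Omega$, $\zeta_z(x)\in \partial j_\Gamma(x,z(x),u(x))$ a.e.\ on $\Gamma$, and the variational inequality for $A,\eta_z,\zeta_z$ on $K$. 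Since Clarke's subdifferential is upper semicontinuous in its point of evaluation, the frozen multi-valued functions $s\mapsto\partial j(x,z(x),s)$ and $s\mapsto\partial j_\Gamma(x,z(x),s)$ satisfy (F1), while (H2) yields the local growth condition (F3); the lattice assumption on $K$ provides the hypotheses needed for Theorem \ref{T103}.

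I would then verify that $\underline{u}$ (resp.\ $\overline{u}$) is a subsolution (resp.\ supersolution) of $(P_z)$ for every $z\in[\underline{u},\overline{u}]$. Starting from a selection $\underline{\eta}\in\partial j(x,\underline{u},\underline{u})$ witnessing the original subsolution property, the monotonicity in (H3)---which reads $r\mapsto j^\circ(r,s;-1)=-\min\partial_s j(r,s)$ is increasing---gives $\min\partial j(x,z,\underline{u})\leq \min\partial j(x,\underline{u},\underline{u})\leq \underline{\eta}$ whenever $z\geq \underline{u}$. The minimal selection $\underline{\eta}_z(x):=\min\partial j(x,z(x),\underline{u}(x))$ is measurable by (H1) together with a standard Kuratowski--Ryll-Nardzewski argument. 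Since $v-\underline{u}\leq 0$ for $v\in \underline{u}\wedge K$, replacing $\underline{\eta}$ by $\underline{\eta}_z$ only enlarges the integrand, so the original subsolution inequality is preserved. The boundary selection $\underline{\zeta}_z$ is handled identically, and the supersolution statement for $\overline{u}$ is symmetric. Theorem \ref{T103} then yields a greatest and a smallest solution of $(P_z)$ within $[\underline{u},\overline{u}]$, which I denote $\Phi^{\ast}(z)$ and $\Phi_{\ast}(z)$.

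Next I would show monotonicity of $\Phi^{\ast}$ (and symmetrically $\Phi_{\ast}$). For $z_1\leq z_2$ and $u_1:=\Phi^{\ast}(z_1)$, a second application of (H3) produces a minimal selection $\tilde{\eta}_1(x):=\min\partial j(x,z_2(x),u_1(x))\leq \eta_1$, where $\eta_1$ is the $(P_{z_1})$-selection at $u_1$. For $v\in u_1\wedge K\subset K$ the sign $v-u_1\leq 0$ converts the $(P_{z_1})$-equality into the $(P_{z_2})$-subsolution inequality; hence $u_1$ is a subsolution of $(P_{z_2})$. Theorem \ref{T102} applied to $(P_{z_2})$ with the pair $u_1,\overline{u}$ yields a solution of $(P_{z_2})$ lying in $[u_1,\overline{u}]$, which is bounded above by $\Phi^{\ast}(z_2)$, so $u_1\leq \Phi^{\ast}(z_2)$. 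With $\Phi^{\ast},\Phi_{\ast}\colon[\underline{u},\overline{u}]\to[\underline{u},\overline{u}]$ increasing, the uniform $\WH$-bounds on the solution sets of all $(P_z)$ provided by Theorem \ref{T103}(i) together with pointwise monotonicity ensure that monotone sequences in the images converge weakly in $\WH$. Theorem \ref{theorem-fixed-point} then produces a greatest fixed point $u^{\ast}$ of $\Phi^{\ast}$ and a smallest fixed point $u_{\ast}$ of $\Phi_{\ast}$; each is a solution of \eqref{problem_discontinuous}. Extremality within $[\underline{u},\overline{u}]$ is immediate from the fixed-point characterization: any solution $u$ of \eqref{problem_discontinuous} solves $(P_u)$, so $\Phi_{\ast}(u)\leq u\leq \Phi^{\ast}(u)$, forcing $u\leq u^{\ast}$ and $u_{\ast}\leq u$.

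The main obstacle I anticipate is the simultaneous handling of the measurability of the minimal Clarke selection under (H1) and the sign-tracking required to pass sub- and supersolution properties between the different frozen problems. Both pieces are standard individually, but they must be woven together precisely, and it is essential that (H3) encodes monotonicity of \emph{both} endpoints of the subdifferential (through $j^{\circ}(\cdot,s;1)$ and $j^{\circ}(\cdot,s;-1)$); without this, the minimal-selection construction in Steps 2 and 3 would break down.
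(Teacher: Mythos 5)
Your proposal is correct and rests on the same core mechanism as the paper's proof: freeze the first argument of $j$ and $j_\Gamma$ so that the frozen multi-valued reaction terms satisfy (F1) and (F3), invoke Theorems \ref{T102} and \ref{T103} on the resulting upper-semicontinuous problems, use (H3) to transfer sub- and supersolution properties across different frozen problems, and close with the order-theoretic fixed-point principle in Theorem \ref{theorem-fixed-point}. Where the two proofs genuinely differ is in the choice of fixed-point operator. The paper defines $G$ on the set $\V$ of supersolutions of \eqref{problem_discontinuous} contained in $[\underline u,\overline u]$ and sends a supersolution $v$ to the \emph{greatest} solution of the problem frozen at $v$ within the smaller interval $[\underline u, v]$; this guarantees $G(v)\le v$ for free, but it forces an additional (H3)-based argument that $v^*$ is again a supersolution of the original discontinuous problem (so that $G$ actually maps $\V$ into itself), and the analogous step for $T$ on the set $\W$ of subsolutions. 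Your $\Phi^*$ is instead defined on the full interval $[\underline u,\overline u]$ and always returns the greatest solution of $(P_z)$ in $[\underline u,\overline u]$. That sidesteps the ``image is a supersolution'' verification entirely; in exchange you have to check that $\underline u$ and $\overline u$ remain a subsolution and a supersolution, respectively, of $(P_z)$ for \emph{every} $z\in[\underline u,\overline u]$, which you do correctly via the one-sided monotonicity encoded in (H3) (the decreasing behaviour of both $\max\partial_s j(x,\cdot,s)$ and $\min\partial_s j(x,\cdot,s)$). Both constructions produce increasing self-maps whose greatest/smallest fixed points are exactly the greatest/smallest solutions, and the hypotheses of Theorem \ref{theorem-fixed-point} are verified by essentially the same compactness and weak-convergence arguments. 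One small inaccuracy: the uniform $W^{1,\mathcal H}(\Omega)$-bound for solutions of all the $(P_z)$ comes from testing the variational inequality against a fixed $v_0\in K$ combined with the $z$-independent bound in (H2), not directly from Theorem \ref{T103}(i), which asserts compactness of the solution set of a single frozen problem; the statement you need is uniformity across $z$, which (H2) supplies.
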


\begin{proof}
    First, we point out that the multi-valued functions $f$ and $f_\Gamma$ defined in \eqref{multi-valued-1} are no longer upper semicontinuous and so the corresponding Nemytskij operators $\F$ and $\F_{\Gamma}$ are not pseudomonotone in general. Hence, we cannot apply Theorem \ref{surjectivitytheorem}. Instead we will make use of a fixed point argument based on Theorem \ref{theorem-fixed-point} combined with the existence and comparison results provided by Theorems \ref{T102} and \ref{T103}.

    {\bf Step 1:} An Auxiliary problem

    First we choose a fixed $v \in [\underline{u},\overline{u}]$ being a supersolution and a fixed $w \in [\underline{u},\overline{u}]$ being a subsolution of \eqref{problem_discontinuous}. We define
    \begin{align*}
        j^v(x,s) & =j(x,v(x),s), \quad
        j_\Gamma^v(x,s)=j_\Gamma(x,v(x),s),\\
        f^v(x,s)&=\partial j(x,v(x),s),\quad
        f_\Gamma^v(x,s)=\partial j_\Gamma(x,v(x),s)
    \end{align*}
    and
    \begin{align*}
        j_w(x,s)&=j(x,w(x),s),\quad
        j_{w,\Gamma}(x,s)=j_\Gamma(x,w(x),s),\\
        f_w(x,s)&=\partial j(x,w(x),s), \quad
        f_{w,\Gamma}(x,s)=\partial j_\Gamma (x,w(x),s).
    \end{align*}
    Here $s\mapsto f^v(x,s)$, $s\mapsto f^v_\Gamma(x,s)$, $s\mapsto f_w(x,s)$ and $s\mapsto f_{w,\Gamma}(x,s)$ stand for Clarke's generalized gradients and from Proposition \ref{P1} (iii) we know that these are upper semicontinuous. Furthermore, we denote by $\F^v, \F^v_\Gamma, \F_w$ and $\F_{w,\Gamma}$ the multi-valued Nemytskij operators related to $f^v, f^v_\Gamma, f_w$ and $f_{w,\Gamma}$, respectively. We introduce the following auxiliary problems:
    \begin{align}
        \label{aux-1}
        \begin{cases}
            \eta\in \F^v(u), \ \zeta\in \F^v_\Gamma(u), \\[1ex]
            \ds \l\langle A(u), \hat{v}-u \r\rangle  +\into \eta(\hat{v}-u) \,\diff x+\int_\Gamma \zeta(\hat{v}-u)\,\diff \sigma \geq 0\quad\forall \hat{v}\in K,
        \end{cases}
    \end{align}
    \begin{align}
        \label{aux-2}
        \begin{cases}
            \eta\in \F_w(u), \ \zeta\in \F_{w,\Gamma}(u), \\[1ex]
            \ds \l\langle A(u), \hat{v}-u \r\rangle  +\into \eta(\hat{v}-u) \,\diff x+\int_\Gamma \zeta(\hat{v}-u)\,\diff \sigma \geq 0\quad\forall \hat{v}\in K.
        \end{cases}
    \end{align}
    Applying hypothesis (H3) we easily see that $\underline{u}, v$ are sub- and supersolutions of \eqref{aux-1} and $w,\overline{u}$ are sub- and supersolutions of \eqref{aux-2}. Moreover, due to (H1) and (H2), the assumptions of Theorems \ref{T102} and \ref{T103} are satisfied. Therefore, there exist the greatest solution $v^*$ and the smallest solution $v_*$ of \eqref{aux-1} within $[\underline{u},v]$ and the greatest solution $w^*$ and the smallest solution $w_*$ of \eqref{aux-2} within $[w,\overline{u}]$. Furthermore, again by using (H3), we can show that $v^*\in [\underline{u},v]$ is a supersolution of \eqref{problem_discontinuous} and $w_*\in  [w,\overline{u}]$ is a subsolution of \eqref{problem_discontinuous}. This can be shown as it was done in \cite[Lemma 4.1]{Carl-2013}.

    {\bf Step 2:} Definition of fixed-point operators

    We define the following sets:
    \begin{align*}
        \V:=\l\{v\in \WH :v \in [\underline{u},\overline{u}] \text{ and } v \text{ is a supersolution of problem }\eqref{problem_discontinuous}\r\}, \\
        \W:=\l\{w\in \WH :w \in [\underline{u},\overline{u}] \text{ and } w \text{ is a subsolution of problem }\eqref{problem_discontinuous}\r\}.
    \end{align*}
    Recall that $v^* \in [\underline{u},v]$ is the greatest solution of \eqref{aux-1} and $w_*\in [w,\overline{u}]$ is the smallest solution of \eqref{aux-2}, we know that the operators
    \begin{align*}
        G & \colon \V\to\V,\quad \V\ni v\mapsto v^*=Gv,  \\
        T & \colon \W\to\W,\quad \W\ni w\mapsto w_*=Tv,
    \end{align*}
    are well-defined due to Step 1. As done in \cite[Lemma 4.2]{Carl-2013}, using again (H3), one can show that $G\colon\V\to\V$ is an increasing operator, that is, $v_1\leq v_2$ implies $Gv_1 \leq Gv_2$. Similarly, $T\colon\W\to\W$ turns out to be increasing as well.

        {\bf Step 3:} Fixed-point argument

    Using again hypothesis (H3) we are able to show that the range $G(\V)$ of $G$ has an upper bound in $\V$ and decreasing sequences of $G(\V)$ converge weakly in $\V$. The proof is similar to the one in \cite[Lemma 4.5]{Carl-2013} by using Propositions \ref{proposition_embeddings} and \ref{prop1}. Similarly, we show that the range $T(\W)$ of $T$ has a lower bound in $\W$ and increasing sequences of $T(\W)$ converge weakly in $\W$. Now we can apply Theorem \ref{theorem-fixed-point} to $T\colon \W\to\W$ to get a smallest fixed point and to $G\colon \V\to\V$ to get a greatest fixed point. By Definition of $G$, $u \in [\underline{u},\overline{u}]$ is a fixed point of $G$ if and only if $u$ is a solution of \eqref{problem_discontinuous}. Similar can be said about $T\colon\W\to\W$.
    This finishes the proof.
\end{proof}

\section{Construction of Sub-supersolution for a Multi-Valued Obstacle  Problem}\label{section_8}
As an application of the results of the preceding sections, in this section we consider the following obstacle problem when $\mathcal{F}_\Gamma=0$:
\begin{align}
    \label{O1}
    u\in K\,:\, 0\in Au+\partial I_K(u)+\mathcal{F}(u)\quad\text{in }W^{1,\mathcal{H}}_0(\Omega)^*,
\end{align}
where $A$ is the double-phase operator given by \eqref{103} and $K$ is defined by
\begin{equation}
    \label{O2}
    K=\l\{u\in W^{1,\mathcal{H}}_0(\Omega)\,:\, u(x)\geq \psi(x)\text{ a.\,e.\,in }\Omega\r\}.
\end{equation}

Under hypotheses (F1) and (F2) the multi-valued function $f\colon\Omega\times \R\to 2^{\R}\setminus\{\emptyset\}$, which generates the operator $\mathcal{F}$, may be represented by means of single-valued functions $f_i\colon \Omega\times \R\to \R$, $i=1,2$, through
\begin{equation}
    \label{O3}
    f(x,s)=\l[f_1(x,s), f_2(x,s)\r] \quad\text{for all }(x,s)\in \Omega\times \R,
\end{equation}
where $s\mapsto f_1(x,s)$ is a (single-valued) lower semicontinuous function, $s\mapsto f_2(x,s)$ is an (single-valued) upper semicontinuous function, and\\ $x\mapsto f_i(x,u(x))$ is a measurable function for any measurable function $x\mapsto u(x)$. We assume the following hypotheses on the function $\psi$ representing the obstacle, and on $f_i$:
\begin{enumerate}
    \item[\textnormal{(H$\psi$)}]
        $\psi\in \WH$ with trace $\psi|_{\partial\Omega}\le 0$ on $\partial\Omega$ and there exists $c_{\psi}>0$ such that
        \begin{align*}
            \psi(x)\leq c_{\psi}\quad\text{for a.\,e.\,}x\in\Omega.
        \end{align*}
    \item[\textnormal{(Hf)}]
        There exist $k_i\in L^{r_1'(\cdot)}(\Omega)$, $i=1,2,$ such that
        \begin{align*}
            f_1(x,s)\leq k_1(x)
            \quad \text{and}\quad
            f_2(x,s)\ge k_2(x) \quad \text{for all }(x,s)\in \Omega\times \R.
        \end{align*}
\end{enumerate}

\begin{remark}
    Hypothesis \textnormal{(H$\psi$)} implies that $K\neq \emptyset$. We note that hypothesis \textnormal{(Hf)} not necessarily implies boundedness of the multi-valued function $f$ given by \eqref{O3}, since the $f_i$ are only one-sided bounded with respect to $s$.
\end{remark}

Let $u_i \in W^{1,\mathcal{H}}_0(\Omega)\cap \Linf$, $i=1,2$, be the unique (weak) solution of the Dirichlet problem
\begin{equation}
    \label{O4}
    Au_i=-k_i\quad\mbox{in }\Omega,\quad u=0\quad\mbox{on }\partial\Omega.
\end{equation}
Note that the boundedness of $u_i$ can be shown similar to \cite{Winkert-Zacher-2011}, due to the embedding $W^{1,\mathcal{H}}_0(\Omega)\hookrightarrow \Wpzero{p(\cdot)}$, see Proposition \ref{proposition_embeddings}(i).

Our existence and comparison result for the obstacle problem is as follows.
\begin{theorem}
    \label{T-O}
    Assume hypotheses \textnormal{(H0)}, \textnormal{(F1)}, \textnormal{(F2)}, \textnormal{(H$\psi$)}, and \textnormal{(Hf)}. Then the obstacle problem \eqref{O1}, \eqref{O2} has a solution $u$ satisfying $u_1(x)\le u(x)\le u_2(x)+M$ in $\Omega$ for $M\ge 0$ sufficiently large.
\end{theorem}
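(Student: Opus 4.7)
The plan is to apply Theorem \ref{T102} (with $\mathcal{F}_\Gamma=0$) to an explicit ordered pair of sub- and supersolutions built from $u_1,u_2$. Namely, I would set
\begin{align*}
    \underline{u}:=u_1,\qquad \overline{u}:=u_2+M,
\end{align*}
with $M\ge 0$ chosen large enough, and select $\underline{\eta}(x):=f_1(x,u_1(x))\in f(x,u_1(x))$ and $\overline{\eta}(x):=f_2(x,\overline{u}(x))\in f(x,\overline{u}(x))$ as the associated measurable selections of $f$ provided by \eqref{O3}. Since $u_1,u_2\in\Linf$, both selections lie in $L^{r_1'(\cdot)}(\Omega)$, and I fix $M$ so that
\begin{align*}
    M\ge \|u_1\|_\infty+\|u_2\|_\infty+c_\psi,
\end{align*}
which in particular guarantees $u_1\le u_2+M$ a.e.\,in $\Omega$ and $u_2+M\ge c_\psi$ a.e.\,in $\Omega$.

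Next I would verify the lattice requirements from Definitions \ref{D102}(i) and \ref{D103}(i). For any $v\in K$ (hence $v\in W^{1,\mathcal{H}}_0(\Omega)$ with $v\ge\psi$), the function $u_1\vee v$ lies in $W^{1,\mathcal{H}}_0(\Omega)$ and satisfies $u_1\vee v\ge v\ge\psi$, so $u_1\vee K\subset K$. Symmetrically, $\overline{u}\wedge v=\min\{u_2+M,v\}$ has boundary trace $\min\{M,0\}=0$, so it belongs to $W^{1,\mathcal{H}}_0(\Omega)$; moreover, the choice of $M$ together with \textnormal{(H$\psi$)} yields $u_2+M\ge c_\psi\ge\psi$, hence $\overline{u}\wedge v\ge\psi$, showing $\overline{u}\wedge K\subset K$.

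The verification of the variational inequalities of Definitions \ref{D102}(iii) and \ref{D103}(iii) is then a direct computation using \eqref{O4} and \textnormal{(Hf)}. For the subsolution, taking any $\varphi\in K$ and $v=u_1\wedge\varphi=u_1-(u_1-\varphi)^+\in u_1\wedge K$, and using $Au_1=-k_1$ together with $\underline{\eta}\le k_1$,
\begin{align*}
    \langle Au_1, v-u_1\rangle+\int_\Omega \underline{\eta}(v-u_1)\,\diff x
    =\int_\Omega (k_1-\underline{\eta})(u_1-\varphi)^+\,\diff x\ge 0.
\end{align*}
For the supersolution, the key observation is that the double-phase operator is translation-invariant, so $A\overline{u}=A(u_2+M)=Au_2=-k_2$. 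For any $\varphi\in K$ and $v=\overline{u}\vee\varphi=\overline{u}+(\varphi-\overline{u})^+\in\overline{u}\vee K$, using $\overline{\eta}\ge k_2$,
\begin{align*}
    \langle A\overline{u}, v-\overline{u}\rangle+\int_\Omega \overline{\eta}(v-\overline{u})\,\diff x
    =\int_\Omega (\overline{\eta}-k_2)(\varphi-\overline{u})^+\,\diff x\ge 0.
\end{align*}
Condition \textnormal{(F3)} on $[\underline{u},\overline{u}]$ is immediate from \textnormal{(F2)}: since $s\in [u_1(x),u_2(x)+M]$ implies $|s|\le\|u_1\|_\infty+\|u_2\|_\infty+M$ and $r_1$ is continuous on $\close$, the term $\beta|s|^{r_1(x)-1}$ is uniformly bounded, so $\sup\{|\eta|:\eta\in f(x,s)\}\le k_\Omega(x)$ for some $k_\Omega\in L^{r_1'(\cdot)}(\Omega)$. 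An application of Theorem \ref{T102} then yields a solution $u\in K$ with $u_1\le u\le u_2+M$.

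The only genuine obstacle lies in the lattice condition $\overline{u}\wedge K\subset K$: one must simultaneously force $u_2+M$ above the obstacle $\psi$ (which is possible thanks to the uniform upper bound $\psi\le c_\psi$ from \textnormal{(H$\psi$)} and the $L^\infty$-regularity of $u_2$) while keeping the trace of $\overline{u}\wedge v$ equal to zero, which is why the additive shift by a positive constant is the natural construction. Everything else reduces to exploiting the sign of $\underline{\eta}-k_1$ and $\overline{\eta}-k_2$ from \textnormal{(Hf)} and the translation invariance of $A$.
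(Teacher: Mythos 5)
Your proof is correct and follows essentially the same route as the paper: the same choice $\underline u=u_1$, $\overline u=u_2+M$, the same selections $\underline\eta=f_1(\cdot,u_1)$, $\overline\eta=f_2(\cdot,\overline u)$, the same use of translation invariance of $A$ to get $A\overline u=-k_2$, and the same sign arguments from (Hf) to verify Definitions \ref{D102}(iii) and \ref{D103}(iii) before invoking Theorem \ref{T102}. The only (harmless) addition is that you spell out the passage from (F2) to the localized bound (F3) on $[\underline u,\overline u]$, which the paper leaves implicit.
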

\begin{proof}
    We are going to make use of Theorem \ref{T102}. To this end we are going to show that $\underline{u}:=u_1$ and $\overline{u}:= u_2+M$ with $M\ge 0$ sufficiently large, are sub- and supersolutions of \eqref{O1}, \eqref{O2}, respectively.

    Let us first show that $\underline{u}:=u_1$ is in fact a subsolution of \eqref{O1}, \eqref{O2} according to Definition \ref{D102}. Clearly, condition $u_1\vee K\subset K$ is fulfilled. Let $\underline{\eta}(x)=f_1(x,u(x))$, then $\underline{\eta}\in L^{r_1'(\cdot)}(\Omega)$ (note: $1<r_1(x)< p^*(x)$) and $\underline{\eta}(x)\in f(x,u_1(x))$, which is (ii). It remains to verify condition (iii) of Definition \ref{D102} (note: $f_\Gamma=0$), that is,
    \begin{equation}
        \label{O5}
        \langle Au_1,v-u_1\rangle +\int_{\Omega}\eta (v-u_1)\,\diff x\ge 0\quad\text{for all }v\in u_1\wedge K,
    \end{equation}
    where
    \begin{align*}
        \langle Au_1,v-u_1\rangle=\into \l(|\nabla u_1|^{p(x)-2} \nabla u_1+ \mu(x) |\nabla u_1|^{q(x)-2} \nabla u_1\r) \cdot \nabla (v-u_1)\,\diff x.
    \end{align*}
    Since  $v\in u_1\wedge K$ can be represented by $v=u_1\wedge\varphi=u_1-(u_1-\varphi)^+$ for any $\varphi\in K$, inequality \eqref{O5} is equivalent to
    \begin{equation}
        \label{O6}
        \l\langle Au_1,(u_1-\varphi)^+\r\rangle +\int_{\Omega}\eta (u_1-\varphi)^+\,\diff x\le 0\quad\text{for all }\varphi\in K.
    \end{equation}
    Since $(u_1-\varphi)^+\in  \{v\in W^{1,\mathcal{H}}_0(\Omega)\,:\, v\geq 0\}$, inequality \eqref{O6} is fulfilled if $u_1$ satisfies (note: $\eta=f_1(\cdot, u_1)$)
    \begin{equation*}
        \langle Au_1,v\rangle +\int_{\Omega}f_1(x, u_1)v\,\diff x\le 0\quad\text{for all }v\in W^{1,\mathcal{H}}_0(\Omega) \text{ with } v\geq 0.
    \end{equation*}
    In view of (Hf) and \eqref{O4} we get $-k_1(x)+f_1(x,u_1)\le 0$, and
    \begin{align*}
        \langle Au_1,v\rangle +\int_{\Omega}f_1(x, u_1)v\,\diff x=\int_{\Omega}\l(-k_1(x)+f_1(x,u_1)\r)v\,\diff x\leq 0
    \end{align*}
    for all $v\in W^{1,\mathcal{H}}_0(\Omega)$ with $v\geq 0$, which proves (iii) of Definition \ref{D102}, and thus $\underline{u}=u_1$ is a subsolution.

    Now let us show that $\overline{u}=u_2+M$ is a supersolution according to Definition \ref{D103} for $M\ge 0$ sufficiently large. From \eqref{O4} we see that $\overline{u}=u_2+M\in \WH$ is the unique solution of the Dirichlet problem
    \begin{equation}
        \label{O8}
        A\overline{u}=-k_2\quad\mbox{in }\Omega,\quad \overline{u}=M\quad\mbox{on }\partial\Omega,
    \end{equation}
    and thus $\overline{u}\wedge K\subset W^{1,\mathcal{H}}_0(\Omega)$. Moreover, since $u_2\in L^\infty(\Omega)$, we get from (H$\psi$) that $\overline{u}=u_2+M\ge c_\psi\ge \psi$, which yields $\overline{u}\wedge K\subset K$ satisfying (i) of Definition \ref{D103}. Set $\overline{\eta}=f_2(\cdot,\overline{u})$, then  $\overline{\eta}\in L^{r_1'(\cdot)}(\Omega)$ and $\overline{u}(x)\in f(x,\overline{u}(x))$, which is (ii). It remains to show (iii) of Definition \ref{D103}, that is,
    \begin{equation}
        \label{O9}
        \langle A\overline{u},v-\overline{u}\rangle +\int_{\Omega}\overline{\eta} (v-\overline{u})\,\diff x\ge 0\quad\text{for all }v\in \overline{u}\vee K.
    \end{equation}
    For $v\in \overline{u}\vee K$ we have $v=\overline{u}\vee \varphi=\overline{u}+(\varphi-\overline{u})^+$, $\varphi\in K$, and thus \eqref{O9} is equivalent to
    \begin{equation}
        \label{O10}
        \l\langle A\overline{u},(\varphi-\overline{u})^+\r\rangle +\int_{\Omega}\overline{\eta} (\varphi-\overline{u})^+\,\diff x\ge 0\quad\text{for all }\varphi\in K.
    \end{equation}
    As $\overline{u}|_{\partial\Omega}=M\ge 0$, it follows that $(\varphi-\overline{u})^+\in \{v\in W^{1,\mathcal{H}}_0(\Omega)\,:\, v\ge 0\}$, hence inequality \eqref{O10} holds true if the following inequality can be verified:
    \begin{equation*}
        \langle A\overline{u},v\rangle +\int_{\Omega}\overline{\eta} v\,\diff x\ge 0\quad\text{for all }v\in W^{1,\mathcal{H}}_0(\Omega) \text{ with }  v\ge 0.
    \end{equation*}
    Taking (Hf) and \eqref{O8} into account, we get $\overline{\eta}-k_2=f_2(\cdot,\overline{u}) -k_2 \ge 0$ and thus
    \begin{equation*}
        \langle A\overline{u},v\rangle +\int_{\Omega}\overline{\eta} v\,\diff x= \int_{\Omega}\l(-k_2+f_2(\cdot,\overline{u})\r) v\,\diff x\geq 0
    \end{equation*}
    for all $v\in W^{1,\mathcal{H}}_0(\Omega)$ with $v\ge 0$, which proves (iii), and thus $\overline{u}=u_2+M$ is a supersolution. Since $u_i\in L^\infty(\Omega)$, $i=1,2$, we obtain by choosing $M\geq 0$ even larger if needed that $\underline{u}=u_1\le u_2+M=\overline{u}$. Applying Theorem \ref{T102} completes the proof.
\end{proof}

One readily verifies that $K$ given by \eqref{O2} satisfies the lattice condition
\begin{align*}
    K\wedge K\subset K\quad \text{and} \quad K\vee K\subset K.
\end{align*}
Hence, as a conclusion of Theorem \ref{T103} we obtain the following characterization of the set $\mathcal{S}$ of all solutions of \eqref{O1},\eqref{O2} lying within the order interval $[\underline{u},\overline{u}]$.

\begin{corollary}
    \label{C-O1}
    Under the hypotheses of Theorem \ref{T-O}, the solution set  $\mathcal{S}$  of \eqref{O1}, \eqref{O2} is a compact subset of  $W^{1,\mathcal{H}}_0(\Omega)$ and possesses a smallest and a greatest solution.
\end{corollary}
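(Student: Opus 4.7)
The plan is to reduce the statement to a direct application of Theorem \ref{T103} with the sub- and supersolution pair $\underline{u}=u_1$ and $\overline{u}=u_2+M$ supplied by Theorem \ref{T-O}. The hypotheses of Theorem \ref{T102} have already been verified in the proof of Theorem \ref{T-O} (note that assumption (F3) follows at once from (Hf) restricted to the order interval $[\underline{u},\overline{u}]$, using that $u_1,u_2\in L^\infty(\Omega)$ and that $f_1\le k_1$, $f_2\ge k_2$ while $f=[f_1,f_2]$), so the conclusions of Theorem \ref{T103}\,(i) on compactness of $\mathcal{S}$ in $\WH$ will follow immediately. Since $\mathcal{S}\subset W^{1,\mathcal{H}}_0(\Omega)$ and the latter is a closed subspace of $\WH$, compactness in $\WH$ will imply compactness in $W^{1,\mathcal{H}}_0(\Omega)$.

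The key remaining step is to verify the lattice condition \eqref{lattice}, i.e.\ that $\mathcal{S}\wedge K\subset K$ and $\mathcal{S}\vee K\subset K$. For this it suffices to show the stronger property $K\wedge K\subset K$ and $K\vee K\subset K$, since $\mathcal{S}\subset K$ by definition. Let $u,v\in K$; then $u,v\in W^{1,\mathcal{H}}_0(\Omega)$ with $u\ge \psi$ and $v\ge\psi$ a.e.\ in $\Omega$. The space $W^{1,\mathcal{H}}_0(\Omega)$ is closed under $\wedge$ and $\vee$ (see Proposition 2.17 in the preliminaries quoted from \cite{CGHW}), so $u\wedge v, u\vee v\in W^{1,\mathcal{H}}_0(\Omega)$. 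The pointwise inequalities $u\wedge v = \min\{u,v\}\ge \psi$ and $u\vee v = \max\{u,v\}\ge \psi$ are immediate, hence $u\wedge v,\ u\vee v\in K$.

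Having verified the lattice condition, Theorem \ref{T103}\,(iii) applies and yields the existence of a smallest solution $u_*\in\mathcal{S}$ and a greatest solution $u^*\in\mathcal{S}$ within $[\underline{u},\overline{u}]$, while part (i) gives compactness of $\mathcal{S}$ in $\WH$ and therefore in $W^{1,\mathcal{H}}_0(\Omega)$. No step presents a substantive obstacle: the only issue worth checking carefully is the lattice property for $K$ (trivial once the stability of $W^{1,\mathcal{H}}_0(\Omega)$ under $\wedge,\vee$ is recalled) and that the growth assumption (F3) needed in Theorem \ref{T103} is indeed inherited from (Hf) on the enclosing interval, which is straightforward because $\underline{u},\overline{u}\in L^\infty(\Omega)$ and $f(x,s)\subset[k_2(x),k_1(x)]\cap\mathbb{R}$ after truncation using the one-sided bounds.
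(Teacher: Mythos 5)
Your proposal follows exactly the paper's route: the paper observes (in the text just before the corollary) that $K$ given by \eqref{O2} satisfies $K\wedge K\subset K$ and $K\vee K\subset K$, notes that $\mathcal{S}\subset K$ implies \eqref{lattice}, and then invokes Theorem~\ref{T103}. Your explicit check that $W^{1,\mathcal{H}}_0(\Omega)$ is stable under $\wedge,\vee$ and that the pointwise inequality $u\wedge v,\,u\vee v\ge\psi$ is preserved spells out the paper's ``one readily verifies,'' and the observation that compactness in $\WH$ passes to the closed subspace $W^{1,\mathcal{H}}_0(\Omega)$ is routine.

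There is, however, one wrong step in your justification of (F3): you claim it ``follows at once from (Hf)'' and that ``$f(x,s)\subset[k_2(x),k_1(x)]$ after truncation.'' That inclusion is false. Hypothesis (Hf) only gives the one-sided bounds $f_1(x,s)\le k_1(x)$ and $f_2(x,s)\ge k_2(x)$; since $f(x,s)=[f_1(x,s),f_2(x,s)]$, the lower endpoint $f_1$ may be arbitrarily negative and the upper endpoint $f_2$ arbitrarily large, so $\sup\{|\eta|:\eta\in f(x,s)\}$ is not controlled by $k_1,k_2$ at all. What actually gives (F3) on the interval $[\underline{u},\overline{u}]$ is hypothesis (F2) (which is part of the hypotheses of Theorem~\ref{T-O} and hence of the corollary): with $\underline{u}=u_1$, $\overline{u}=u_2+M\in L^\infty(\Omega)$, the bound $\sup\{|\eta|:\eta\in f(x,s)\}\le\alpha(x)+\beta|s|^{r_1(x)-1}$ restricted to $s\in[\underline{u}(x),\overline{u}(x)]$ yields $k_\Omega(x):=\alpha(x)+\beta\max\{1,\|\underline{u}\|_\infty,\|\overline{u}\|_\infty\}^{(r_1)_+-1}\in L^{r_1'(\cdot)}(\Omega)$, which is precisely (F3) with $\tau_1=r_1$. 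So the corollary is correct, but the supporting argument for the local boundedness should be rerouted through (F2) rather than (Hf); the role of (Hf) in Theorem~\ref{T-O} is only to produce the sub- and supersolution pair, not to bound $f$.
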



\end{document}